\title{Coupling Polynomial Stratonovich Integrals: \\the two-dimensional Brownian case\footnote{This work was supported by EPSRC Research Grant EP/K013939.  
% WSK supported by EPSRC Research Grant EP/K013939.
This is a theoretical research paper and, as such, no new data were created during this study.}
} % \thanks is optional. Insert line breaks with \\
\author{%
  Sayan~Banerjee\footnote{University of North Carolina, Chapel Hill, USA.
    \texttt{{sayan@email.unc.edu}}}
  ~and %% remove this line and below if single author
  Wilfrid~Kendall\footnote{University of Warwick, UK. \texttt{{w.s.kendall@warwick.ac.uk}}}
  }%AUTHORS
\date{\today}
\theoremstyle{plain}
 \newtheorem{thm}{Theorem}
 \newtheorem{lem}[thm]{Lemma}
\theoremstyle{definition}
\theoremstyle{remark}
 \newtheorem{rem}[thm]{Remark}
 \newcommand{\Expect}[1]{\operatorname{\mathbb{E}}\left[#1\right]}
 \newcommand{\Integers}{\mathbb{Z}}
 \newcommand{\Prob}[1]{\operatorname{\mathbb{P}}\left[#1\right]}
 \newcommand{\Reals}{\mathbb{R}}
 \renewcommand{\d}{\,\operatorname{d}}
 \newcommand{\eps}{\varepsilon}
 \DeclareMathOperator{\sgn}{sgn}
\newcommand{\X}{\mathbf{X}}
\newcommand{\tw}{\widetilde{w}}
\newcommand{\fki}{\mathfrak{i}}
\newcommand{\tfki}{\widetilde{\mathfrak{i}}}
\newcommand{\tA}{\widetilde{A}}
\newcommand{\tB}{\widetilde{B}}
\newcommand{\tI}{\widetilde{I}}
\newcommand{\tW}{\widetilde{W}}
\newcommand{\tX}{\mathbf{\widetilde{X}}}
\newcommand{\degree}{\operatorname{deg}}
\newcommand{\CouplingTime}{T} % WSK0: I prefer to reserve T for coupling times, leaving eg \mathcal{T} for other quantities.
\newcommand{\ProbR}[1]{\operatorname{\mathbb{P}}_R\left[#1\right]}
\newcommand{\StochEquiv}{\stackrel{\mathcal{D}}{=}}
\begin{document}
\maketitle
%%%%%%%%%%%%%%%%%%%%%%%%%%%%%%%%%%%%%%%%%%%%%%%%%%%%%%%%%%%%%%%%%%%
%%                                                               %%
%% Please edit and customize the following items:                %%
%%                                                               %%
%%%%%%%%%%%%%%%%%%%%%%%%%%%%%%%%%%%%%%%%%%%%%%%%%%%%%%%%%%%%%%%%%%%

\begin{abstract}
{We show how to build an immersion coupling of a two-dimensional Brownian motion $(W_1, W_2)$ 
along with 
\(\binom{n}{2} + n= \tfrac12n(n+1)\). % \xNB[WSK1]{Abstract: Might as well be specific about the number of iterated integrals involved for a given value of $n$! \textbf{SB1}: I think the correct number is this. Please check.}
integrals of the form $\int W_1^iW_2^j \circ{\operatorname{d}}W_2$, where
% {i +j \le n}
$j=1,\ldots,n$ and $i=0, \ldots, n-j$ 
for some fixed $n$.
The resulting construction is applied to the study of couplings of certain hypoelliptic diffusions 
(driven by two-dimensional Brownian motion using polynomial vector fields).
% \xNB[WSK1]{Abstract: Be specific about implications for coupling of hypoelliptic diffusions.}
This work follows up previous studies concerning coupling of Brownian stochastic areas and time integrals %\citep{BenArousCranstonKendall-1995,KendallPrice-2004,Kendall-2007,Kendall-2009d,Kendall-2013a, BanerjeeKendall-2015,BanerjeeGordinaMariano-2016},
(Ben Arous, Cranston and Kendall (1995), Kendall and Price (2004), Kendall (2007), Kendall (2009), Kendall (2013), Banerjee and Kendall (2015), Banerjee, Gordina and Mariano (2016))
and is part of an ongoing research programme aimed at gaining a better understanding of when it is possible to couple not only diffusions
but also multiple selected integral functionals of the diffusions.}
\end{abstract}

\noindent{\textbf{Keywords:} 
Brownian Motion; 
Coupling;
Elliptic Diffusion;
Faithful Coupling;
Heisenberg Group;
Hypoelliptic Diffusion;
Immersion Coupling;
Kolmogo\-rov Diffusion;
L\'evy Stochastic Area;
Markovian Coupling;
Monomial;
Nilpotent Diffusion;
Parabolic H{\"o}rmander Condition;
Reflection Coupling;
Stochastic Differential Equation;
Stratonovich Integral;
Total Variation Distance} % Separate items with ;

\noindent\textbf{AMS MSC 2010:} \text{60J60; 60J65} % Edit. Separate items with ;
%\AMSSUBJSECONDARY{FIXME:} % Optional, separate items with ;

% \SUBMITTED{May 3, 2017} % Edit.
% \ACCEPTED{February 11, 2018} % Edit.

%%%%%%%%%%%%%%%%%%%%%%%%%%%%%%%%%%%%%%%%%%%%%%%%%%%%%%%%%%%%%%%%%%%
%%                                                               %%
%% Please uncomment and edit if you have an arXiv ID:            %%
%%                                                               %%
%%%%%%%%%%%%%%%%%%%%%%%%%%%%%%%%%%%%%%%%%%%%%%%%%%%%%%%%%%%%%%%%%%%

%\ARXIVID{NNNN.NNNNvn} % Edit.
%\HALID{hal-NNN} % Edit.

%%%%%%%%%%%%%%%%%%%%%%%%%%%%%%%%%%%%%%%%%%%%%%%%%%%%%%%%%%%%%%%%%%%
%%                                                               %%
%% The following items will be set by the Managing Editor.       %%
%%                                                               %%
%%%%%%%%%%%%%%%%%%%%%%%%%%%%%%%%%%%%%%%%%%%%%%%%%%%%%%%%%%%%%%%%%%%

% \VOLUME{0}
% \YEAR{2012}
% \PAPERNUM{0}
% \DOI{vVOL-PID}

%%%%%%%%%%%%%%%%%%%%%%%%%%%%%%%%%%%%%%%%%%%%%%%%%%%%%%%%%%%%%%%%%%%
%%                                                               %%
%% Please edit and customize the abstract:                       %%
%%                                                               %%
%%%%%%%%%%%%%%%%%%%%%%%%%%%%%%%%%%%%%%%%%%%%%%%%%%%%%%%%%%%%%%%%%%%

\section{Introduction} \label{sec:intro}
%%
% \xNB[WSK0]{``no new data'' statement for EPSRC.}
% \xNB[WSK0]{\textcolor{blue}{\textbf{WRITING TIP}}: Converted citation tags to ``names-date'' format to clarify papers to which citation tags point.}
% \xNB[WSK0]{\textcolor{blue}{\textbf{WRITING TIP}}: Add newline after every period, and also after some clauses, to help {\LaTeX} error messages to be more informative by pointing to shorter lines!}
A \emph{coupling} of two probability measures $\mu_1$ and $\mu_2$, 
defined on respective measure spaces $(\Omega_1, \mathcal{F}_1)$ and $(\Omega_2, \mathcal{F}_2)$, 
is a joint law $\mu$ defined on the product space $(\Omega_1 \times \Omega_2, \mathcal{F}_1 \times \mathcal{F}_2)$ 
whose marginals are $\mu_1$ and $\mu_2$.
%An \emph{immersion coupling} is a coupling of random processes evolving in time,
%in which martingales of the natural filtrations of each of the processes remain martingales in the joint filtration;
A coupling of Markov processes $X$ and $\widetilde{X}$ is 
% said to be 
an \emph{immersion coupling} when the joint process
$$
\{(X(t+s), \widetilde{X}(t+s)): s \ge 0\} \text{ conditioned on } \mathcal{F}_t
$$
is again a coupling of the laws of $X$ and $\widetilde{X}$, but now starting from $(X(t), \widetilde{X}(t))$.
This is also called a co-adapted coupling \citep{Kendall-2007}, 
or faithful coupling \citep{Rosenthal-1997}, 
and is very closely related to the
near-equivalent notion of a Markovian coupling 
 \citep{BurdzyKendall-2000},
which additionally constrains the joint process $(X(t), \widetilde{X}(t))$ to be Markovian with respect to the filtration $\left(\mathcal{F}_t\right)_{t \ge 0}$. Immersion couplings are typically very much easier to describe than general couplings, 
since they may be specified in causal ways using, for example, stochastic calculus.

In the following we consider couplings of smooth elliptic diffusions with \(d\)-dimensional state space ${\Reals}^d$.
Specifically, for $d \ge 2$ and $1\leq k \leq d$, 
consider the following Stratonovich stochastic differential equation on ${\Reals}^d$:
\begin{align}\label{eq:SDE}
X(t)\quad=\quad
x + \int_0^tV_0(X(s)){\d}s + \sum_{i=1}^k\int_0^tV_i(X(s)) \circ {\d}W_i(s)\,.
\end{align}
% \xNB[WSK0]{\textcolor{blue}{\textbf{WRITING TIP}}: Often easier to read equations if principal relation (eg ``$=$'') is seprated out by spaces.}
Here $x \in {\Reals}^d$ is the initial state, 
$V_1, \dots,V_k$ are 
% complete 
smooth vector fields,
and $(W_1,\dots,W_k)$ is a standard Brownian motion on ${\Reals}^k$.
We will consider couplings of two copies $X$ and $\widetilde{X}$ of this diffusion,
starting from arbitrary distinct initial states $x, \widetilde{x} \in {\Reals}^d$.
% A natural object in this context is 
Interest focusses on
the \emph{coupling time}
% \xNB[WSK0]{\textcolor{blue}{\textbf{WRITING TIP}}: ``$\backslash$emph'' requires marginally less typing than ``$\backslash$mathit'', and is more flexible!}
$\CouplingTime$, 
defined as
$$
\CouplingTime\quad=\quad
\inf\{t \ge 0: X(s)= \widetilde{X}(s) \text{ for all } s \ge t\}\,.
$$
The coupling is said to be \emph{successful} if almost surely $\CouplingTime < \infty$ 
(where ``almost surely'' refers to the coupling measure $\mu$).
A major motivation to study couplings arises from the so-called \citeauthor{Aldous-1983}' coupling inequality
(see \citealp{Aldous-1983}):
\begin{equation}\label{eq:Aldous}
\mu(\tau>t) 
\quad\ge\quad
\|\mu_{t}-\widetilde{\mu}_{t}\|_{TV}\,, \qquad\text{ for all } t \ge 0\,,
\end{equation}
where $\mu_{t}$ and $\widetilde{\mu}_{t}$ denote the laws of $X(t)$ and $\widetilde{X}(t)$ respectively and $||\cdot||_{TV}$ denotes the total variation distance between probability measures given by
$$
\|\mu_1 - \mu_2\|_{TV} \quad=\quad
\sup\{|\mu_1(A)-\mu_2(A)| : \text{ measurable } A\}\,.
$$
% As the right hand side of
Using inequality \eqref{eq:Aldous},
% depends only on the marginal laws of the diffusions at time $t$, 
construction of a 
% clever 
coupling of $X$ and $\widetilde{X}$ 
automatically 
% provides
% gives good 
% estimates on 
bounds
the total variation distance between the laws of the diffusions at time $t$.
% Moreover,
% if the coupling is a 
A \emph{maximal coupling}
% (
is one for which 
% is to say, if 
the inequality \eqref{eq:Aldous} is actually an equality for all $t$. These have been shown to exist under very general conditions \citep{Griffeath-1975,Pitman-1976,Goldstein-1979,SverchkovSmirnov-1990,ErnstKendallRobertsRosenthal-2017}. However in most cases the task of explicitly constructing such a maximal coupling is extremely hard, if not impossible. This provides strong motivation for considering 
% Markovian 
immersion
couplings which, although not maximal in most cases \citep{BanerjeeKendall-2014,Kuwada-2009}, are easier to describe and can provide helpful bounds via \eqref{eq:Aldous}. 

Immersion couplings have been extensively studied for \emph{elliptic} diffusions (which is to say
diffusions given by \eqref{eq:SDE} when $k=d$ and $\{V_1(x), \dots, V_d(x)\}$ form a basis for ${\Reals}^d$ at each $x \in {\Reals}^d$).
The simplest example of such a coupling is the
\emph{reflection coupling} of Euclidean Brownian motions starting from two different points:
the second Brownian path is obtained from the first by reflecting the first path on the
hyperplane bisecting the line joining the starting points until the first path (equivalently,
the second, reflected, path) hits this hyperplane.
This coupling turns out to be maximal as well as Markovian!
% \xNB[WSK1]{Perhaps too many remarks relating to scarcity of maximal couplings of diffusions which are also immersion.}
%\cite{BanerjeeKendall-2014} (extending \citealp{Kuwada-2009}) show that 
%the existence of Markovian maximal couplings of smooth elliptic diffusions on complete Riemannian manifolds from sufficiently many pairs of starting points 
%forces the underlying space to be one of the three simply connected space forms (sphere, Euclidean space or hyperbolic space),
%and the couplings themselves then reduce to reflection couplings (immediate generalizations of the Euclidean reflection described above) composed with flows of isometries
%and (in the Euclidean case) dilations.
% Lindvall and Rogers in 
\cite{LindvallRogers-1986} extended this reflection construction to produce successful Markovian couplings for 
\emph{elliptic diffusions} on ${\Reals}^d$ with bounded and Lipschitz drift and diffusion coefficients when the diffusion matrix does not vary too much in space
(see also \citealp{ChenLi-1989}).
However in general the construction is not symmetric between the two coupled processes,
and this method is not easily applicable when the diffusion matrix varies appreciably over space.
A more geometric approach, depending symmetrically on the two coupled diffusions,
is provided by the \emph{Kendall-Cranston coupling} \citep{Kendall-1986,Cranston-1991}.
Consider the positive-definite diffusion matrix $\sigma(x)$ formed with columns $V_1(x),\dots,V_d(x)$.
As $x$ varies over ${\Reals}^d$, so this furnishes ${\Reals}^d$ with a Riemannian metric $g$ given by
$g(x)=(\sigma(x)\sigma(x)^\top)^{-1}$.
With this intrinsic \emph{diffusion metric}, ${\Reals}^d$ becomes a Riemannian manifold and the diffusion can be recognized as a Brownian motion with drift on this manifold.
One then uses an appropriate generalization of reflection involving parallel transport along geodesics to obtain reflection couplings.
% analogous to the Lindvall-Rogers coupling.
These couplings are successful when (for example) the Ricci curvature of the manifold is non-negative and the drift vector field satisfies appropriate regularity conditions.
We note here that the Kendall-Cranston coupling also works for elliptic diffusions whose state space is any smooth manifold, 
and can be applied to even more general situations \citep{VonRenesse-2004}.

The above techniques fail for diffusions which are not elliptic, as there is no natural Riemannian metric intrinsic to such diffusions.
However, an important class of non-elliptic diffusions has attracted attention in recent times: namely, the \emph{hypoelliptic diffusions}.
These are diffusions $(X(t):t \ge 0)$ such that $X(t)$ has a \emph{smooth density with respect to Lebesgue measure} for each $t>0$.
%Intuitively, a hypoelliptic diffusion can be thought of as a continuous Markov process $X$ such that for each $t>0$, there is an open set in the support of $X(t)$ although the ``infinitesimal increments'' at each time lie in a proper subspace of the tangent space (spanned by $V_1,\dots,V_k$).
They arise naturally in a variety of contexts: for example,
modelling the motion of a particle following Newton's equations under a potential, 
white noise random forcing and linear friction (the \emph{kinetic Fokker-Plank diffusion}, \citealp{Villani-2006}), 
describing stochastic oscillators (the \emph{Kolmogorov diffusion}, \citealp{MarkusWeerasinghe-1988}), 
quantum mechanics and rough paths theory (\emph{Brownian motion on the Heisenberg group},
\citealp{Neuenschwander-1996}, \citealp{FrizHairer-2014}) 
and modelling of macromolecular systems \citep{GrubmullerTavan-1994}.
All these examples place a premium on gaining a good understanding of the behaviour of hypoelliptic diffusions.
In particular, the construction of successful couplings for these diffusions immediately implies, \emph{via} Aldous' inequality \eqref{eq:Aldous}, 
that the total variation distance between the laws of two such diffusions started from distinct points converges to zero as time goes to infinity.
Furthermore, estimates on the coupling time distribution 
deliver
% will give us 
bounds on the convergence rate.
% of 
% this convergence.
This, in turn, yields estimates of rate of convergence to stationarity, when a stationary measure exists.
Moreover, these couplings can also be used to furnish gradient estimates for harmonic functions corresponding to the generators of the diffusions \emph{via} purely probabilistic means 
\citep{Cranston-1992,Cranston-1991,BanerjeeGordinaMariano-2016}.

At the time of writing, coupling of hypoelliptic diffusions have only been studied for rather specific examples.
Hypoelliptic diffusions can be viewed as ``high dimensional processes driven by low dimensional Brownian motions'', 
which suggests that the goal of producing successful Markovian couplings of such diffusions
may be best achieved by learning how 
to produce Markovian couplings of the driving Brownian motion together with a (typically finite) collection of path functionals.
These couplings, sometimes described as \emph{exotic couplings}, were first studied in \cite{BenArousCranstonKendall-1995}. 
This described successful Markovian couplings for the \emph{Kolmogorov diffusion of order one} 
(given by a Brownian motion $B$ along with its running time integral $\int_0^tB(s){\d}s$) and \emph{Brownian motion on the Heisenberg group}
% \xNB[WSK1]{``Brownian motion on the Heisenberg group'': check this is correct use of ``order'' for nilpotent Lie groups?}
(a two-dimensional Brownian motion $(B_1, B_2)$ together with its L\'evy stochastic area $\int_0^tB_1(s){\d}B_2(s) - \int_0^tB_2(s){\d}B_1(s)$).
% Kendall and Price 
\cite{KendallPrice-2004} 
% gave a
showed how to generate
successful Markovian couplings for the Kolmogorov diffusion of any finite order $n$ 
(a Brownian motion $B$ along with its $n-1$ iterated time integrals 
$\int\cdots\int_{0\le s_1 \le \dots \le s_i \le t}B(s_1){\d}s_1{\d}s_2\dots {\d}s_i$ for $1\le i \le n-1$).
Later 
% Kendall 
\cite{Kendall-2007,Kendall-2009d} described a construction of a successful Markovian coupling of Brownian motion 
on the 
% Heisenberg 
step-two free nilpotent Lie
group of any underlying finite dimension $n$ (corresponding to
an \(n\)-dimensional Brownian motion $(B_1,\dots, B_n)$ together with the 
% ${n}\choose{2}$ 
\(\binom{n}{2}\)
stochastic areas $\int_0^tB_i(s){\d}B_j(s) - \int_0^tB_j(s){\d}B_i(s)$ for $1 \le i < j \le n$, or, 
using vector notation, \(\underline{B}\) together with the alternating vector-product
\(\int \underline{B}\wedge\d\underline{B}\)).
% \xNB[WSK1]{Settle on good notation for Brownian vectors \emph{etc}.}
\cite{Kendall-2013a} described how to couple scalar Brownian motion together with local time, and used this to couple a rather degenerate diffusion arising in stochastic control theory.
Even in these rather simple examples, the coupling constructions turn out to be quite complicated.
Simpler cases use careful combinations of reflection coupling and \emph{synchronous coupling} (making Brownian increments agree):
\cite{Kendall-2007,Kendall-2009d} show that one also needs to use more varieties of coupling
(for example what might be called ``rotation couplings'')
when coupling all stochastic areas for Brownian motion in dimension of \(3\) or greater.

In this article, we provide constructions of immersion (in fact, Markovian) couplings for a considerable range of diffusions of the form given in \eqref{eq:SDE} with \(k=2\),
based on polynomial vector fields \(V_i\).
%In this article, we construct successful Markovian couplings for a wide class of hypoelliptic diffusions 
%driven by a two-dimensional Brownian motion using polynomial vector fields.
% This serves as a starting point in our general program of coupling hypoelliptic diffusions.
This is a significant step beyond \cite{Kendall-2007,Kendall-2009d} 
in the development of the programme of understanding coupling for hypoelliptic diffusions,
albeit limited here to the case of an underlying two-dimensional Brownian motion.
Before going into the detailed description of the problem, we define the \emph{parabolic H{\"o}rmander condition} which will be a crucial assumption in the coupling construction.

Consider the following sets of vector fields: 
\begin{align*}
\mathcal{V}_0 \quad=\quad\{V_i: i \ge 1\}, \hspace{1.5cm}
\mathcal{V}_{j+1} \quad=\quad\{[U,V_i]: U \in \mathcal{V}_j, \ i \ge 0\} \ \text{for } j \ge 0,
\end{align*}
where $[U,V]$ denotes the Lie bracket of the vector fields $U$ and $V$.
Set $\mathcal{\mathbf{V}}_j(x)= \operatorname{span}\{V(x), \ V \in \mathcal{V}_j\}$.
We will make the following assumption:
% \xNB[WSK1]{Verify definition of (PHC).}
\begin{itemize}
\item[(PHC)] The vector fields $V_0, V_1, \dots V_k$ satisfy the \emph{parabolic H{\"o}rmander condition}, i.e., $\bigcup_{j \ge 0} \mathcal{\mathbf{V}}_j(x)= {\Reals}^d$ for each $x \in {\Reals}^d$.
%for each $x \in {\Reals}^d$.
%\begin{align*}
%\mathcal{V}_0 \quad&=\quad\{V_i: i \ge 1\}\\
%\mathcal{V}_{k+1} \quad&=\quad\{[U,V_i]: U \in \mathcal{V}_k, \ i \ge 0\} \ \text{for } k \ge 0,
%\end{align*}
%and $\mathcal{\mathbf{V}}_k(x)= \operatorname{span}\{V(x), \ V \in \mathcal{V}_k\}$, where $[U,V]$ denotes the Lie bracket of the vector fields $U$ and $V$, then $\bigcup_{k \ge 0} \mathcal{\mathbf{V}}_k(x)= {\Reals}^d$ for each $x \in {\Reals}^d$.
\end{itemize}
% We note here that, subject 
Subject 
to 
suitable
regularity conditions, (PHC) is a \emph{necessary} assumption if we 
want to construct successful couplings from arbitrary pairs of starting points.
To see this, consider the distribution of sub-spaces \(\{\mathcal{D}(x)=\bigcup_{k \ge 0} \mathcal{\mathbf{V}}_j(x), \ x \in \mathbb{R}^d\}\) generated by $\bigcup_{j \ge 0}\mathcal{V}_j$ 
(that is, the smoothly varying subspace of the tangent space spanned by these vector fields).
% It was shown in
\cite{Lobry-1970}
% % %\NB[WSK1]{Check work of \cite{Lobry-1970}.}
showed that if $\mathcal{D}$ is of ``locally finite type'' (in particular, if the vector fields $V_0, V_1,\dots, V_k$ are real analytic), then it has the \emph{maximal integral manifold property}, i.e., for each point $x \in {\Reals}^d$, there exists an immersed submanifold $S$ (called an integral manifold) containing $x$ with the property that its tangent bundle coincides with the distribution $\mathcal{D}$.
Moreover, 
% we can choose 
$S$ 
% such 
can be chosen so
that any other integral manifold which intersects $S$ must be an open submanifold of $S$ (note that $S$ need not be complete in $\mathbb{R}^d$).
In this case, ${\Reals}^d$ splits into disjoint maximal integral manifolds.
It follows from support theorems \citep[for example]{IkedaWatanabe-1981} that if 
a diffusion starts
from a point inside one maximal integral manifold 
then almost surely it must stay in this manifold for all time.
Thus, under regularity conditions such as real analyticity, if (PHC) does not hold, then there must be at least two disjoint maximal integral manifolds.
Consequently, two copies of the diffusion started from points in different maximal integral manifolds will almost surely never meet.

In order to make progress towards answering the general question of whether it is possible to construct successful immersion couplings of 
% copies of 
a diffusion satisfying (PHC) from arbitrary pairs of distinct starting points,
% seems beyond our reach at this point.
% So, in this article, 
this article considers a simplification.
% subclass.
% variant of the question.
It will be convenient to view \(\Reals^d=\Reals\times\Reals\times\Reals^{d-2}\),
with corresponding coordinates $w=(w_1, w_2, w_3) \in {\Reals} \times {\Reals} \times {\Reals}^{d-2}$
\emph{etc}
(in a mild abuse of notation, \(w_3\) denotes a \((d-2)\)-dimensional vector).
We assume that our diffusions satisfy \eqref{eq:SDE} when the \emph{drift vector field $V_0=0$}, 
the \emph{driving Brownian motion is two-dimensional} (i.e.
$k=2$) and the \emph{driving vector fields $V_1$ and $V_2$ are polynomial functions of the driving Brownian motion}.
Specifically,
% \xNB[WSK1]{Is there a group-theoretic interpretation for the system \eqref{eq:diffform}?}
for $d \ge 3$ and for each $w=(w_1, w_2, w_3) \in {\Reals} \times {\Reals} \times {\Reals}^{d-2}$, 
suppose that $X$ can be written as
\begin{equation}\label{eq:underlying-sde}
X(t)\quad=\quad
(w_1+W_1(t), w_2 + W_2(t), X_3(t))
\end{equation}
where $(W_1, W_2)$ is a two-dimensional standard Brownian motion and $X_3$ %the $d-2$ functionals
can be written in vector format as satisfying the Stratonovich differential equation
\begin{equation}\label{eq:diffform}
X_3(t)\quad=\quad
w_3 + \sum_{i=1}^2\int_0^t \sigma_i(w_1 + W_1(s),w_2 + W_2(s)) \circ {\d}W_i(s).
\end{equation}
Here $\sigma_1$, $\sigma_2$ are (\((d-2)\)-dimensional) vector-valued \emph{polynomials}:
\begin{align*}
\sigma_i(x_1, x_2) 
\quad=\quad
a_i^{0,0} + \sum_{1 \le l+m \le n} a_i^{l,m} x_1^lx_2^m, \qquad\text{ for } i=1, 2\,,
\end{align*}
with \((d-2)\)-dimensional vector-valued coefficients  $a_i^{l,m}=(a_{i,3}^{l,m}, \dots, a_{i,d}^{l,m})^\top \in {\Reals}^{d-2}$.
For convenience, write
\[
\sigma_i(x_1, x_2) \quad=\quad (\sigma_{i,3}(x_1,x_2), \dots, \sigma_{i,d}(x_1,x_2))^\top\,. 
\]
Lemma \ref{lem:PHC} below describes exactly when the system \eqref{eq:diffform} satisfies (PHC).

Several important examples of hypoelliptic diffusions fall in this category, including \emph{Brownian motion on the Heisenberg group} \citep{Neuenschwander-1996,BenArousCranstonKendall-1995}.
The problem of immersion coupling for diffusions in the form of \eqref{eq:diffform} makes a useful next step in the bigger program of coupling hypoelliptic diffusions because of the following reasons.
Firstly, the zero drift condition helps to simplify (PHC) and give a clearer exposition, 
although we believe that the methods developed here can be used even when the drift is non-zero
but satisfies certain growth conditions.
% \xNB[WSK1]{Concerning the simplified question (2D BM, polynomial vector fields): there is a softer question. Namely, can we add a suitable smooth drift, at the price perhaps of only having a positive chance of success?}
Secondly, the polynomial form of the driving vector fields ensures that $X_3$ can be written 
using linear combinations of monomial Stratonovich integrals of the form $(\int W_1^iW_2^j \circ{\d}W_2: {i +j \le n})$ and thus, the problem reduces to successfully coupling the driving Brownian motions along with these integrals.
Moreover, these polynomial vector fields can be used to approximate a large class of real analytic and nilpotent vector fields and we hope that our technique will extend to more general diffusions driven by such vector fields.
Thirdly, as proved in Lemma \ref{lem:PHC} below, (PHC) for this class of diffusions simplifies to a non-singularity condition for a matrix formed by the vectors $a_i^{l,m}$.
Finally, as described in \cite{Kendall-2007} in the simpler context of coupling stochastic areas, 
successful Markovian coupling strategies can be achieved using only reflection/synchronous coupling of Brownian motions when the driving Brownian motion is two-dimensional 
(for example, Brownian motion on the Heisenberg group), 
but for higher dimensional analogues it is necessary to employ rotation couplings (which is to say, control strategies using orthogonal matrices),
and this complicates the coupling strategy considerably.
% Thus, in this article,
As we will see,
the restriction to a two-dimensional driving Brownian motion 
in the case of \eqref{eq:diffform} similarly
allows for a rather explicit coupling construction using only synchronous coupling of $W_2$ at all times together with judicious switching between synchronous and reflection phases for $W_1$. However, we anticipate that one of the challenges of dealing with higher-dimensional Brownian motions will be to deal with
complexity entailed by no longer being able to keep one coordinate synchronously coupled and in agreement for all time.
We 
plan to
% hope to
address the complexities of the higher dimensional case in a subsequent article.

In the remainder of this section, we will show that,
in order to successfully couple two copies $X$ and $\widetilde{X}$ of our diffusion
\eqref{eq:underlying-sde} started from distinct points, 
it suffices successfully to couple simultaneously the driving Brownian motions along with integrals of the form 
$(\int W_1^iW_2^j \circ{\d}W_2 : {i +j \le n})$.
Define the (\((d-2)\)-dimensional) vector-valued function
\begin{equation}\label{eq:def-of-phi}
\phi(x_1, x_2)
\quad=\quad 
\sigma_2(x_1,x_2)-\int_{w_1}^{x_1}\partial_2 \sigma_1(u, x_2) {\d}u 
\end{equation}
where $(x_1,x_2) \in {\Reals}^2$ and $\partial_i$ denotes 
the partial derivative with respect to the \(i^\text{th}\) coordinate $(i=1,2)$.

Set $\Psi_1(x_1,x_2)= \int_{w_1}^{x_1}\sigma_1(u,x_2){\d}u$.
Computing the Stratonovich differential of \(\Psi_1(w_1 + W_1(t),w_2 + W_2(t))\),
and then integrating this differential,
amounts to establishing an integration-by-parts relation between
% note that we can convert 
certain Stratonovich integrals with respect to \(W_1\)
and other Stratonovich integrals with respect to \(W_2\), 
holding up to addition of 
a function of \(W_1\) and \(W_2\) whose coupling follows directly from coupling of \((W_1,W_2)\): 
\begin{multline*}
\int_0^t\sigma_1(w_1 + W_1(s),w_2 + W_2(s)) \circ {\d}W_1(s)
\quad=\quad \\
\Psi_1(w_1 + W_1(t),w_2 + W_2(t))
 - \int_0^t \partial_2\Psi_1(w_1 + W_1(s),w_2 + W_2(s)) \circ {\d}W_2(s)\,.
\end{multline*}
Hence \(X_3\) can be expressed as the sum of a function of \(W_1\) and \(W_2\) and a Stratonovich integral
with respect to \(W_2\) alone:
\begin{align}\label{eq:psi}
X_3(t)& \quad=\quad
 w_3 + \int_0^t\sigma_1(w_1 + W_1(s),w_2 + W_2(s)) \circ {\d}W_1(s)\nonumber\\
&\quad\qquad + \int_0^t\sigma_2(w_1 + W_1(s),w_2 + W_2(s)) \circ {\d}W_2(s)
\nonumber\\
&\quad=\quad w_3 + \Psi_1(w_1 + W_1(t),w_2 + W_2(t))
\nonumber\\
&\quad\qquad  + \int_0^t \left[\sigma_2(w_1 + W_1(s),w_2 + W_2(s)) - \partial_2\Psi_1(w_1 + W_1(s),w_2 + W_2(s))\right] \circ {\d}W_2(s)
\nonumber\\
&\quad=\quad w_3 + \Psi_1(w_1 + W_1(t),w_2 + W_2(t)) + \int_0^t \phi(w_1 + W_1(s),w_2 + W_2(s)) \circ {\d}W_2(s)\,.
\end{align}

Let $\Sigma(x_1,x_2)$ denote the $(d-2) \times \frac{n(n-1)}{2}$ matrix formed by 
arranging in a row the \(\tfrac{n(n-1)}{2}\) different \((d-2)\)-dimensional vector-valued functions
% \xNB[WSK1]{$\Sigma(x_1,x_2)$: arrange the (column) vector-valued functions in a row! Will need then to use $\Sigma(x_1,x_2)^\top$ \emph{etc} later!}
$\partial_1^{l+1}\partial_2^{m}\phi(x_1,x_2)$ (here ${1 \le l+1+m \le n}$).
The condition (PHC) is equivalent to a rank condition on the matrix-valued function \(\Sigma\):
\begin{lem}\label{lem:PHC}
The diffusion $X$ satisfies (PHC) if and only if $\Sigma(w_1,w_2)$ has full rank $d-2$, 
where $w$ is the starting point of the diffusion in question.
% \xNB[WSK1]{See the \textbf{\textcolor{red}{WORRY}} below. May need to require $\Sigma(x_1,x_2)$ has rank $d-2$ for all $x$. Otherwise we only know that $d-2$ is the maximum rank of $\Sigma(x_1,x_2)$ as $x$ ranges over a neighborhood of $x$. \textcolor{blue}{Sayan has an argument which he will enter in.} \textbf{SB1}: Put in the argument. Please check.}
\end{lem}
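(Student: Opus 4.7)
The plan is to translate the geometric condition (PHC) into an algebraic rank condition by computing iterated Lie brackets of $V_1$ and $V_2$ explicitly.

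First, write the vector fields in coordinates: $V_0 \equiv 0$ and, for $i = 1, 2$,
\[
V_i \;=\; \partial_{x_i} + \sum_{k=3}^d \sigma_{i,k}(x_1, x_2) \, \partial_{x_k}.
\]
Because $V_0 = 0$, brackets involving $V_0$ vanish, so only iterated brackets of $V_1, V_2$ can contribute non-trivially. A direct computation gives
\[
[V_1, V_2] \;=\; \sum_{k=3}^d \bigl(\partial_1 \sigma_{2,k} - \partial_2 \sigma_{1,k}\bigr) \partial_{x_k},
\]
and using \eqref{eq:def-of-phi} together with $\partial_1 \Psi_1 = \sigma_1$, this simplifies to $\sum_{k=3}^d (\partial_1 \phi_k)(x_1, x_2) \, \partial_{x_k}$.

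The key structural observation is that this bracket is \emph{vertical} — supported only on the last $d-2$ coordinates — with coefficients depending only on $(x_1, x_2)$. For any vertical field $W = \sum_{k=3}^d f_k(x_1, x_2) \, \partial_{x_k}$ of this form, a short calculation yields $[V_i, W] = \sum_{k=3}^d (\partial_{x_i} f_k) \, \partial_{x_k}$ for $i = 1, 2$, which is again vertical of the same form. Induction on $j$ then shows that $\mathbf{V}_j(x)$ for $j \ge 1$ is spanned by vertical fields whose payload vectors are of the form $\partial_1^{l+1} \partial_2^m \phi(x_1, x_2)$ for $l, m \ge 0$; since $\phi$ is polynomial of degree at most $n$, the only nonzero contributions come from $l + 1 + m \le n$, and every such derivative of $\phi$ is realized by bracketing $[V_1, V_2]$ further with $V_1$ ($l$ times) and $V_2$ ($m$ times).

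Finally, the vector fields $V_1(x), V_2(x)$ project onto the standard basis $e_1, e_2$ of the $(x_1, x_2)$-plane, so $\mathbf{V}_0(x)$ already covers the first two coordinates. Consequently $\bigcup_{j \ge 0} \mathbf{V}_j(w) = \mathbb{R}^d$ precisely when the vertical span contributed by $\bigcup_{j \ge 1} \mathbf{V}_j(w)$ exhausts $\mathbb{R}^{d-2}$; this is exactly the condition that the matrix $\Sigma(w_1, w_2)$ with columns $\partial_1^{l+1} \partial_2^m \phi(w_1, w_2)$ (for $1 \le l + 1 + m \le n$) has full row rank $d - 2$. The main obstacle is really just the bookkeeping in the inductive step: one must verify both that no non-vertical brackets are produced at any level $j \ge 1$, and that the full family of partial derivatives of $\phi$ up to order $n$ is realized.
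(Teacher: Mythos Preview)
Your computation of the iterated brackets is correct and matches the paper's approach exactly: both arguments identify the higher brackets as vertical fields with payload vectors $\partial_1^{l+1}\partial_2^m\phi(x_1,x_2)$, and both conclude that at any given point $x$ the spanning condition $\bigcup_{j\ge 0}\mathbf{V}_j(x)=\Reals^d$ is equivalent to $\Sigma(x_1,x_2)$ having rank $d-2$.

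However, there is a genuine gap. The condition (PHC) is stated as a condition at \emph{every} point $x\in\Reals^d$, not just at the starting point $w$. Your final paragraph only establishes the equivalence at $w$. For the backward implication of the lemma you must still show that if $\Sigma(w_1,w_2)$ has rank $d-2$ then $\Sigma(x_1,x_2)$ has rank $d-2$ for all $(x_1,x_2)$, so that the H\"ormander span is full at every point. This is not automatic: for generic smooth $\phi$ the rank of $\Sigma$ can vary from point to point. The paper closes this gap by exploiting that $\phi$ is a \emph{polynomial} of degree at most $n$: if $\Sigma$ drops rank at some $x$, a nontrivial linear combination $\sum_k c_k\partial_1^{l+1}\partial_2^m\phi_k(x_1,x_2)$ vanishes for all admissible $(l,m)$; Taylor-expanding $\partial_1\phi$ about $x$ shows the same combination kills $\partial_1\phi$ identically, and hence all its derivatives, so the rank drops everywhere. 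Equivalently, the columns of $\Sigma(x_1,x_2)$ are constant vectors (derivatives of order $n$ of a degree-$n$ polynomial) together with lower-order derivatives that Taylor-expand in terms of them, so the column span is independent of the point. You need some version of this argument to finish.
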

\begin{proof}
Let \(\partial_1\), \(\partial_2\), \ldots,  \(\partial_d\) represent the standard basis vectors of \(\Reals^d\).
The diffusion $X$ can be expressed in the form
$$
X(t)\quad=\quad w + 
% \sum_{i=1}^2\int_0^tV_i(X(s)) \circ {\d}W_i(s)
\int_0^tV_1(X(s)) \circ {\d}W_1(s)
+
\int_0^tV_2(X(s)) \circ {\d}W_2(s)
$$
where $w=(w_1,w_2,w_3)$ and the vector fields $V_1, V_2$ are given by
$$
V_i(x) \quad=\quad
\partial_i + \sum_{j=3}^d \sigma_{i,j}(x_1,x_2) \partial_j
$$
for $i=1,2$ and $x=(x_1, x_2, x_3) \in \Reals\times\Reals\times{\Reals}^{d-2}$ 
(so \(x_3\) is a \((d-2)\)-dimensional vector).
For any word $I=(i_1,\dots, i_N) \in \{1,2\}^N$, write $V_I=[V_{i_1}[V_{i_2}[\dots [V_{i_{N-1}},V_{i_N}]]\dots]$.
Denote by $n_1(I)$, $n_2(I)$ the number of occurrences of $1, 2$ respectively in the word $I$.
We claim that if \(I\) is of length 2 or more and \(i_{N-1}=1\), \(i_N=2\) then
% \xNB[WSK1]{Proof of Lemma \ref{lem:PHC}: need to take care because some $V_I$ vanish rather than taking the form given in \eqref{eq:hor}!}
\begin{equation}\label{eq:hor}
V_I(x)\quad=\quad
\sum_{j=3}^d\partial_1^{n_1(I)}\partial_2^{n_2(I)-1}\phi_j(x_1,x_2)\partial_j\,.
\end{equation}
We prove this by induction on the length of the word \(I\).
For length 2, \eqref{eq:hor} follows from the definition of \(\phi\) in \eqref{eq:def-of-phi}:
% observe that
\begin{align*}
[V_1,V_2] (x) &\quad=\quad
\sum_{j=3}^d
\left(
   \partial_1\sigma_{2,j}(x_1,x_2) - 
   \partial_2\sigma_{1,j}(x_1,x_2)
\right)\partial_j
   \quad=\quad
   \sum_{j=3}^d \partial_1\phi_j(x_1,x_2)\partial_j\,.
\end{align*}
For any $N >2$ assume that the induction hypothesis holds for words of length less than $N$.
Consider \(I\in\{1,2\}^{N-1}\) with \(i_{N-2}=1\), \(i_{N-1}=2\), and examine the case of \(I^*=(i_0,I)\).
% Now, for any $I \in \{1,2\}^N$, write $I=(i_1, I^*)$.
By the induction hypothesis,
$$
V_{I}(x)\quad=\quad
\sum_{j=3}^d\partial_1^{n_1(I)}\partial_2^{n_2(I)-1}\phi_j(x_1,x_2)\partial_j\,.
$$
Observe that $\sigma_1, \sigma_2$ depend only on $x_1, x_2$, 
so that the form of \(V_I\) implies that \(V_{I} V_{i_0}=0\)
and \(V_{i_0} V_{I} =\partial_{i_0}V_{I}\). Therefore
\begin{align*}
V_{I^*}(x) &\quad=\quad
\partial_{i_0}V_{I}(x)
\quad=\quad 
\sum_{j=3}^d\partial_{i_0}\partial_1^{n_1(I)}\partial_2^{n_2(I)-1}\phi_j(x_1,x_2)\partial_j\\
&\quad=\quad
\sum_{j=3}^d\partial_1^{n_1(I^*)}\partial_2^{n_2(I^*)-1}\phi_j(x_1,x_2)\partial_j
\end{align*}
proving \eqref{eq:hor} in that case also.
% The first equality above holds because $\sigma_1, \sigma_2$ depend only on $x_1, x_2$ and consequently, the other terms in the Lie bracket vanish.
Note that the coefficients of $\partial_1$ and $\partial_2$ are zero in $V_{i_1,\dots, i_{N-2},1,2}$.

Now observe that $V_1(x)$ and $V_2(x)$ are linearly independent for each $x \in {\Reals}^d$.
Furthermore, 
\(V_{i_1,\dots, i_{N-2},2,1}=-V_{i_1,\dots, i_{N-2},1,2}\)
while \(V_{i_1,\dots, i_{N-2},1,1}=V_{i_1,\dots, i_{N-2},2,2}=0\),
so
the coefficients of $\partial_1$ and $\partial_2$ for $V_I$ are zero whenever the word $I$ has length greater than or equal to $2$.
Thus, for (PHC) to hold, the subspace spanned by $\{V_I: \text{ length}(I) \ge 2\}$ must have dimension $d-2$.
By \eqref{eq:hor} and the definition of $\Sigma(x_1,x_2)$, this is equivalent to requiring that $\Sigma(x_1,x_2)$ has rank $d-2$.
But $\phi$ is a vector polynomial, 
so $\Sigma(x_1,x_2)$ has rank $d-2$ if and only if $\Sigma(w_1,w_2)$ has rank $d-2$.
To see this, note that if $\Sigma(x_1,x_2)$ has rank $d-2$ for some $x \in \Reals^d$, then there exists a $(d-2) \times (d-2)$ sub-matrix $\Sigma^*(x_1, x_2)$ of $\Sigma(x_1,x_2)$ which is non-singular. From the continuity of the determinant, we conclude that $\Sigma^*(y_1, y_2)$, and hence $\Sigma(y_1, y_2)$, has rank $d-2$ for all $y$ in an open neighborhood of $x$. Conversely, if $\Sigma(x_1,x_2)$ has rank less than $d-2$ for some $x \in \Reals^d$, then there exist constants $c_1,\dots, c_d$, not all zero, such that
$$
\sum_{k=3}^d c_k \partial_1^{l+1}\partial_2^{m}\phi_k(x_1,x_2) 
\quad=\quad
0\,, 
\qquad \text{ for } 1 \le l+1+m \le n\,.
$$
%(Recall that $\phi(x_1,x_2)=(\phi_3(x_1, x_2),\dots,\phi_d(x_1,x_2))$).
As $\phi(x_1,x_2)=(\phi_3(x_1, x_2),\dots,\phi_d(x_1,x_2))$ is a vector polynomial in $x_1, x_2$ of degree $n$,
$$
\partial_1\phi(y_1,y_2)
\quad=\quad
\sum_{0 \le l+m \le n-1} \frac{\partial_1^{l+1}\partial_2^{m}\phi(x_1,x_2)}{l! \ m!}(y_1-x_1)^l(y_2 - x_2)^m\,,
 \qquad\text{ for } y \in \Reals^d\,.
$$
Hence,
$$
\sum_{k=3}^d c_k \partial_1\phi_k(y_1,y_2) 
\quad=\quad
\sum_{0 \le l+m \le n-1} \sum_{k=3}^d c_k\frac{\partial_1^{l+1}\partial_2^{m}\phi_k(x_1,x_2)}{l! \ m!}(y_1-x_1)^l(y_2 - x_2)^m
\quad=\quad0
\,.
$$
So further differentiation yields 
$$
\sum_{k=3}^d c_k \partial_1^{l+1}\partial_2^{m}\phi_k(y_1,y_2) 
\quad=\quad0
\,, 
\qquad\text{ for } 1 \le l+1+m \le n,  \ \ y \in \Reals^d\,.
$$
Thus, $\Sigma(y_1,y_2)$ has rank less than $d-2$ for all $y \in \Reals^d$. From the connectedness of $\Reals^d$, $\Sigma(x_1,x_2)$ has rank $d-2$ for some $x \in \Reals^d$ if and only if $\Sigma(w_1,w_2)$ has rank $d-2$, completing the proof of the lemma.
% \xNB[WSK1]{\textbf{\textcolor{red}{WORRY}}: is it really true that ``$\Sigma(x_1,x_2)$ has rank $d-2$ if and only if $\Sigma(w_1,w_2)$ has rank $d-2$?'' Consider the vectorfield $x d/dx$, spanning a space of dimension 1 everywhere except $x=0$, where it has dimension 0? \textcolor{blue}{As noted above, Sayan has an argument which he will enter in.} \textbf{SB1}: Put in the argument. Please check.}
\end{proof}
\begin{rem}
It follows from the reasoning in the proof of Lemma \ref{lem:PHC} 
that $V_I=0$ whenever the length of the word $I$ strictly exceeds $n$ 
(the maximal degree of the polynomial coefficients \(\sigma_1\), \(\sigma_2\)), 
regardless of whether (PHC) is satisfied or not.
From this observation it follows that any diffusion 
of the form implied by \eqref{eq:diffform} is \emph{nilpotent} \citep{Baudoin-2004}.
Nilpotent diffusions serve as the starting point for many analyses of hypoelliptic diffusions owing to their simplicity.
\end{rem}
Consider the task of immersion coupling two copies $X$ and $\widetilde{X}$ of the diffusion starting from $w$ and $\widetilde{w}$ respectively,
using driving Brownian motions $(W_1, W_2)$ and $(\widetilde{W}_1, \widetilde{W}_2)$.
Reflection coupling can be used to bring together the driving Brownian motions first.
Thus there is no loss of generality in assuming that the \emph{Brownian} starting points agree: $(w_1, w_2)=(\widetilde{w}_1, \widetilde{w}_2)$.
Referring to the representation \eqref{eq:psi}, it suffices to couple the two diffusions
\begin{align*}
X^*(t)\quad&=\quad
\left(w_1 + W_1(t),w_2 + W_2(t), 
 w_3 + \int_0^t \phi(w_1 + W_1(s),w_2 + W_2(s)) \circ {\d}W_2(s)\right)\,,
\\
\widetilde{X}^*(t)\quad&=\quad
\left(w_1 + \widetilde{W}_1(t),w_2 + \widetilde{W}_2(t),
 \widetilde{w}_3 + \int_0^t \phi(w_1 + \widetilde{W}_1(s),w_2 + \widetilde{W}_2(s)) \circ d\widetilde{W}_2(s)\right)\,,
\end{align*}
with starting points differing only in the third, vectorial, coordinates \({w}_3\), \(\widetilde{w}_3\).
(This is because coupling of the summands \(\Psi_1(w_1 + W_1(t),w_2 + W_2(t))\) and \(\Psi_1(w_1 + \widetilde{W}_1(t),w_2 + \widetilde{W}_2(t))\) in \eqref{eq:psi} 
is immediately implied by coupling of \((W_1,W_2)\) and \((\widetilde{W}_1, \widetilde{W}_2)\).)
%We can reflection couple the driving Brownian motions first and thus, without loss of generality, assume that $(w_1, w_2)=(\widetilde{w}_1, \widetilde{w}_2)$.
Denote by $\mathcal{I}(t)$ the vector formed by 
$\left(\frac{\int_0^t{W_1(s)^{l+1}}{W_2(s)^m} \circ {\d}W_2(s)}{(l+1)! \ m!}:{1 \le l+1+m \le n}\right)$ and similarly $\widetilde{\mathcal{I}}(t)$.
Decomposing the polynomials given by the integrands \(\phi(w_1 + W_1(s),w_2 + W_2(s))\) and \(\phi(w_1 + \widetilde{W}_1(s),w_2 + \widetilde{W}_2(s))\)
according to whether or not monomials involve \(W_1\) 
(respectively \(\widetilde{W}_1\)),
the last $d-2$ coordinates of $X^*$, $\widetilde{X}^*$ can be written in vector form as $X^*_3$, $\widetilde{X}^*_3$ where
\begin{align*}
X^*_3(t)&\quad=\quad
w_3 + P(w_2, w_2+W_2(t)) + \Sigma(w_1,w_2) \mathcal{I}(t)\,,
\\
\widetilde{X}^*_3(t)&\quad=\quad
\widetilde{w}_3 + P(w_2, w_2+ \widetilde{W}_2(t)) + \Sigma(w_1,w_2) \widetilde{\mathcal{I}}(t)\,,
\end{align*}
where $P$ is a polynomial that arises from Stratonovich integration (with respect to $W_2$) of monomials in $w_2 + W_2$ alone.
%Note that \(\widetilde{w}_1=w_1\) and \(\widetilde{w}_2=w_2\), so \(\Sigma(\widetilde{w}_1,\widetilde{w}_2)=\Sigma(w_1,w_2)\).  
By Lemma \ref{lem:PHC}, (PHC) implies that $\Sigma(w_1,w_2)$ has rank $d-2$, 
hence $w_3, \widetilde{w}_3$ both lie in the space spanned by the columns of $\Sigma(w_1,w_2)$.
Thus, there are $z_3^*, \widetilde{z}_3^* \in {\Reals}^{n(n-1)/2}$ 
such that $w_3 = \Sigma(w_1,w_2) z_3^*$, \(\widetilde{w}_3 = \Sigma(w_1,w_2) \widetilde{z}_3^*\).
% \xNB[WSK1]{replaced $w_3^*$ by $z_3^*$ etc, because $w$-vectors are typically of dimension $d-2$.}
Hence
\begin{align*}
X^*_3(t)\quad&=\quad
P(w_2, w_2+W_2(t)) + \Sigma(w_1,w_2) (z_3^* + \mathcal{I}(t))\,,
\\
\widetilde{X}^*_3(t)\quad&=\quad
P(w_2, w_2+\widetilde{W}_2(t)) + \Sigma(w_1,w_2) (\widetilde{z}_3^* + \widetilde{\mathcal{I}}(t))\,.
\end{align*}
It follows that if we can successfully couple
% \xNB[WSK1]{\textcolor{red}{\textbf{ALERT:}} Let's check the argument leading up to \eqref{eq:reduction} with some care! In my experience this sort of argument can be tricky, even though in principle it is elementary.}
\begin{align}
&(w_1 + W_1(t), w_2 + W_2(t), z_3^* + \mathcal{I}(t))\,,
\nonumber\\
&(\widetilde{w}_1 + \widetilde{W}_1(t), \widetilde{w}_2 + \widetilde{W}_2(t), \widetilde{z}_3^* + \widetilde{\mathcal{I}}(t))
\label{eq:reduction}
\end{align}
from arbitrary pairs of starting points $(w_1, w_2, z_3^*), (\widetilde{w}_1, \widetilde{w}_2, \widetilde{z}_3^*) \in {\Reals} \times {\Reals} \times  {\Reals}^{n(n-1)/2}$, 
then we can successfully couple
$X$ and $\widetilde{X}$ from arbitrary pairs of starting points.
In the next section, we construct a coupling of the above processes.
We show in Theorem \ref{thm:couplemain} that this coupling is indeed successful, and that
moreover the coupling time has a power law tail.
The existence of such a successful coupling immediately implies the following theorem.
\begin{thm}\label{thm:hypocouple}
Consider the diffusion $X(t)=(X_1(t), X_2(t), X_3(t)) \in {\Reals}^d$ (for $d \ge 3$, considering \(X_3\) as a \((d-2)\)-dimensional process) defined by the following stochastic differential equation:
\begin{align*}
{\d}X_1(t)\quad&=\quad{\d}W_1(t)\,,
\\
{\d}X_2(t)\quad&=\quad{\d}W_2(t)\,,
\\
{\d}X_3(t)\quad&=\quad
% \sum_{i=1}^2 \sigma_i(X_1(t),X_2(t)) \circ {\d}W_i(t)
\sigma_1(X_1(t),X_2(t)) \circ {\d}W_1(t)
+
\sigma_2(X_1(t),X_2(t)) \circ {\d}W_2(t)
\,,
\end{align*}
where $(W_1, W_2)$ is a two-dimensional standard Brownian motion and $\sigma_1, \sigma_2$ are polynomial vector fields such that (PHC) holds.
Then there exists a successful Markovian coupling of two copies of the above diffusion starting from any pair of distinct points.
\end{thm}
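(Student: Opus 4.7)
The plan is to show that the statement follows immediately from the reduction already carried out in this section, combined with the forthcoming Theorem \ref{thm:couplemain}, which supplies a successful Markovian coupling for the enriched process \eqref{eq:reduction}. First I would invoke the integration-by-parts representation \eqref{eq:psi} to re-express
\[
X_3(t) \quad=\quad w_3 + \Psi_1(w_1+W_1(t),w_2+W_2(t)) + \int_0^t \phi(w_1+W_1(s),w_2+W_2(s)) \circ \d W_2(s)\,,
\]
so that the non-trivial stochastic content of the auxiliary coordinates is driven purely by $W_2$; the $\Psi_1$ summand depends only on the driving Brownian motion and so couples automatically once $(W_1,W_2)$ and $(\widetilde{W}_1,\widetilde{W}_2)$ do.

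Next I would apply standard reflection coupling to the two-dimensional driving Brownian motions, bringing $(\widetilde{W}_1,\widetilde{W}_2)$ into coincidence with $(W_1,W_2)$ in finite time almost surely, and running them synchronously thereafter. This first stage is Markovian and reduces the problem to the situation $(w_1,w_2)=(\widetilde{w}_1,\widetilde{w}_2)$. Expanding $\phi$ as a polynomial and separating monomials according to whether they involve $W_1$ (or $\widetilde{W}_1$) yields the decomposition $X^*_3(t) = w_3 + P(w_2,w_2+W_2(t)) + \Sigma(w_1,w_2)\,\mathcal{I}(t)$, and symmetrically for $\widetilde{X}^*_3$. By Lemma \ref{lem:PHC}, the assumption (PHC) forces $\Sigma(w_1,w_2)$ to have full row rank $d-2$, so one may choose $z_3^*,\widetilde{z}_3^*\in\Reals^{n(n-1)/2}$ with $\Sigma(w_1,w_2)z_3^*=w_3$ and $\Sigma(w_1,w_2)\widetilde{z}_3^*=\widetilde{w}_3$. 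The whole coupling problem is thereby reduced to producing a successful Markovian coupling of the enriched pair in \eqref{eq:reduction} starting from arbitrary points in $\Reals\times\Reals\times\Reals^{n(n-1)/2}$.

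Finally I would invoke Theorem \ref{thm:couplemain} to obtain precisely such a coupling. Because $\Sigma(w_1,w_2)$ is a fixed linear map and $P$ is the same polynomial on both sides, coupling of $z_3^*+\mathcal{I}(t)$ with $\widetilde{z}_3^*+\widetilde{\mathcal{I}}(t)$ transfers directly to coupling of $X^*_3$ with $\widetilde{X}^*_3$, and hence of $X_3$ with $\widetilde{X}_3$. The three stages --- Brownian reflection coupling, subsequent synchronous running of $(W_1,W_2)$, and the enriched coupling of Theorem \ref{thm:couplemain} --- are each Markovian, and are glued together at stopping times, so the composite joint process is Markovian as required. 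The substantive obstacle in this programme lies not in the present theorem but in Theorem \ref{thm:couplemain} itself: the statement above is essentially a repackaging of that core result back into the language of the original hypoelliptic diffusion.
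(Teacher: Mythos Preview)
Your proposal is correct and follows precisely the route taken in the paper: the reduction via \eqref{eq:psi}, Lemma~\ref{lem:PHC}, and the decomposition leading to \eqref{eq:reduction} is carried out in the introduction, and Theorem~\ref{thm:hypocouple} is then stated as an immediate consequence of Theorem~\ref{thm:couplemain}. The only cosmetic difference is that the paper does not isolate a separate ``synchronous running'' stage between the initial reflection coupling and the enriched coupling; once the Brownian motions meet, one passes directly to the construction of Theorem~\ref{thm:couplemain}.
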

Theorem \ref{thm:hypocouple} summarizes the qualitative content of (and is a direct consequence of) 
Theorem \ref{thm:couplemain} stated and proved in Section \ref{sec:coup-multiple} below,
but omits the tail estimate on the coupling time distribution.
It is stated here as a separate theorem in order to highlight how the Brownian integral couplings constructed in the subsequent sections 
connect to the general theme of coupling hypoelliptic diffusions.

%%%%%%%%%%%%%%%%%%%%%%%%%%%%%%%%%%%%%%%%%%%%%%%%%%%%%%%%%%%%
\section{Technical preliminaries}\label{sec:TP}
To facilitate inductive arguments in the following proofs,
we 
% need to 
fix 
% on 
a total ordering \(\preceq\) of the discrete simplex 
\(\Delta_n=\{(a,b) \in \Integers^2: 0 \le a, b,  a + b \le n\}\) (for some fixed \(n\geq1\)).
We achieve this by specifying a function \(f: \Delta_n \rightarrow \Integers\) and
defining the order by  \((a,b) \preceq (c,d)$ if $f(a,b) \le f(c,d)\).

We choose \(f(a,b)=2na + (2n+1)b\): totality of \(\preceq\) follows since \(f\) takes values in the totally ordered set \(\Integers\);
antisymmetry holds by a parity argument showing that \(f(a,b)=f(c,d)\) if and only if \(a=c\) and \(b=d\); transitivity is immediate.
Note that the \(\preceq\)-maximal element of \(\Delta\) is \((0,n)\).
% \xNB[WSK1]{There is another order, the dynamical order, which is partial not total, extended by $preceq$.}
We remark that \(\preceq\) can be replaced by any other total ordering extending the partial ordering induced by 
considering \(a+b\).

% For a fixed integer $n \ge 1$, we define the set $\Delta_n=\{(i,j) \in \Integers^2: 0 \le i, j,  i + j \le n\}$ and a function $f: \Delta_n \rightarrow \Integers$ by $f(i,j)=2ni + (2n+1)j$.
% We will use $f$ to define a total ordering $``\preceq "$ of $\Delta_n$ by $(i,j) \preceq (i',j')$ if $f(i,j) \le f(i',j')$.
% To verify that this is indeed a valid total ordering, we need to verify totality, antisymmetry and transitivity.
% Totality is immediate as $f$ takes values in the totally ordered set $\Integers$.
% Further, note that $f(i,j)=f(i',j')$ if and only if $i=i'$ and $j=j'$, implying antisymmetry.
% Also, $f(i_1, j_1) \le f(i_2,j_2)$ and $f(i_2, j_2) \le f(i_3,j_3)$ implies $f(i_1, j_1) \le f(i_3,j_3)$, verifying transitivity.
% It is easy to see that under this ordering, the maximal element in $\Delta_n$ is $(0,n)$.%Thus, we can define the ordering $``\preceq "$ of $\Delta_n$ by $(i,j) \preceq (i',j')$ if $f(i,j) \le f(i',j')$.

From here onwards, to save cumbersome notation, $(W_1,W_2)$ will denote a two-dimensional Brownian motion starting from a general point $(W_1(0), W_2(0)) \in \mathbb{R}^2$.
Let $(a,b) \in \Delta_n$ be the index representing the Brownian Stratonovich integral
\(I_{(a,b)}(t)= I_{(a,b)}(0) + \int_0^tW_1^aW_2^b\circ {\d}W_2\) (so \(I_{(0,0)}(t)=W_2\)).
We shall refer to such integrals as \emph{monomial Stratonovich integrals}.
Consider the \(\preceq\)-ordered collection of Brownian integrals (deeming \(W_1\) to have precedence over all \(I_{(i,j)}\) for \((i,j)\in\Delta\))
% \xNB[WSK1]{\textcolor{red}{\textbf{PUZZLE:}} is the definition of ``$\mathbf{\overline{X}}_{(a,b)}$'' as intended? It would make sense to have lower bound $(1,0)$, since without a $W_1$ component the Stratonovich integrals reduce to polynomials! \textbf{SB1}: made appropriate change.}
$$
\mathbf{X}_{(a,b)}
\quad=\quad
\left(W_1, I_{(c,d)} ; {(c,d) \preceq (a,b)}, \ c \ge 1\right)\,.
$$
% We take 
In the following, it is only necessary to consider 
$c \ge 1$;
% above as for 
in the case
$c=0$, $I_{(c,d)}$ reduces to a monomial in $W_2$ and so $W_2$ and its coupled counterpart will take equal values for all time in our coupling construction.
Notice also the following: if $a=0$ and $(a^-,b^-)$ is the predecessor of $(a,b)=(0,b)$ in the $\preceq$ ordering, then $\mathbf{X}_{(a^-,b^-)}=\mathbf{X}_{(a,b)}$. This is because
$
\{(c,d) \preceq (0,b), c \ge 1\}
=\{(c,d) \prec (0,b), c \ge 1\} = \{(c,d) \preceq (a^-,b^-), c \ge 1\}.
$
%We will distinguish the case $W_1(0)=W_2(0)=0$ and $I_{(a,b)}(0)=0$ for all $0 \le a, b,  a + b \le n$ by writing $\overline{I}_{(a,b)}$ and $\mathbf{\overline{X}}_{(a,b)}$ for $I_{(a,b)}$ and $\X_{(a,b)}$ respectively.
%For general starting points, set
%\begin{align*}
%I_{(a,b)}(t) \quad&=\quad I_{(a,b)}(0) + \overline{I}_{(a,b)}(t)\,,\\
%\X_{(a,b)}(t) \quad&=\quad \X_{(a,b)}(0) + \mathbf{\overline{X}}_{(a,b)}(t)\,.
%\end{align*}

% We abbreviate $\X=\X_{(0,n)}$.
% %It is easy to see that under this ordering, the maximal element in $\Delta_n$ is $(0,n)$.
% % We will write $\X$ for $\X_{(0,n)}$.
% 
% Our task is to couple two copies $\X$ and $\tX$ of the above vector starting from different points.%
% \xNB[WSK1]{\textcolor{red}{\textbf{PUZZLE:}} Why define $\X$ here, at start of Section \ref{sec:TP}, and then not use it again in the section? I've commented it out for now (but that means it will need to be defined later). \textbf{SB1}: moved definition to beginning of Section \ref{sec:coup-multiple}.}

Scaling arguments play a major r\^ole in the study of these couplings.
The following lemma records a simple but crucial fact about scaling for Stratonovich integrals of Brownian motions.
% We record a simple but crucial fact about scaling in the following lemma.
% For any scalar $r$, write
Consider the scaling transform \(\mathcal{S}_r\), defined for any scalar \(r\) by 
$$
\mathcal{S}_r\left(\X_{(a,b)}\right)
\quad=\quad
\left(r W_1, r^{i+j+1}I_{(i,j)} : {(i,j) \preceq (a,b)}, i \ge 1\right)\,.
$$
Further, define
$
W_i^{(r)}(t) = rW_i(0) + (W_i(t) - W_i(0))
$  
for $i=1,2$ and
$
I^{(r)}_{(a,b)}(t) = r^{a+b+1}I_{(a,b)}(0) + \int_0^t(W_1^{(r)})^a(W_2^{(r)})^b\circ {\d}W_2
$
for $0 \le a, b,  a + b \le n$. Write
$$
\X^{(r)}_{(a,b)}
\quad=\quad
\left(W^{(r)}_1, I^{(r)}_{(c,d)} ; {(c,d) \preceq (a,b)}, \ c \ge 1\right)\,.
$$
\begin{lem}\label{lem:scaling}
The following distributional equality holds:
$$
\left(
\mathcal{S}_r\left(\X_{(a,b)}\right)(t) : t \ge 0
\right) 
\quad{\StochEquiv}\quad
\left( 
\X^{(r)}_{(a,b)}(r^2 t) : t \ge 0
\right) \,.
$$
\end{lem}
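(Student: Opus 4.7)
The approach is to exploit two ingredients: Brownian scaling, which tells us that $(W_i(r^2 t) - W_i(0))/r$ has the law of a driftless Brownian motion run for time $t$; and the fact that Stratonovich integrals obey ordinary change-of-variable rules, so constants and time-changes pass through the $\circ \d W$ symbol without any Itô correction. Because the time change $s = r^2 u$ is deterministic and the integrand is a polynomial in smooth semimartingales, all formal manipulations justify themselves.

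First I would treat the Brownian component. Writing $W_i^{(r)}(r^2 t) = r W_i(0) + (W_i(r^2 t) - W_i(0))$, Brownian scaling applied jointly to the pair $(W_1, W_2)$ yields the process-level distributional identity
\[
\left(W_1^{(r)}(r^2 t),\, W_2^{(r)}(r^2 t)\right)_{t\ge 0}
\quad \StochEquiv \quad
\left(r W_1(t),\, r W_2(t)\right)_{t\ge 0}\,.
\]
This already matches the first coordinate on both sides of the lemma.

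Next I would handle each integral $I^{(r)}_{(a,b)}(r^2 t)$ by the substitution $s = r^2 u$ in the Stratonovich integral:
\[
\int_0^{r^2 t}(W_1^{(r)}(s))^a (W_2^{(r)}(s))^b \circ \d W_2^{(r)}(s)
= \int_0^t (W_1^{(r)}(r^2 u))^a (W_2^{(r)}(r^2 u))^b \circ \d W_2^{(r)}(r^2 u)\,.
\]
Substituting the above distributional identity for $(W_1^{(r)}(r^2\cdot),W_2^{(r)}(r^2\cdot))$ and pulling out the scalar factors of $r$ (using that Stratonovich integration against $r W_2$ produces one factor of $r$, and the monomial integrand $r^{a+b}$ more), this equals in law $r^{a+b+1}\int_0^t W_1(u)^a W_2(u)^b \circ \d W_2(u) = r^{a+b+1}(I_{(a,b)}(t) - I_{(a,b)}(0))$. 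Combining with the initial condition $r^{a+b+1} I_{(a,b)}(0)$ built into the definition of $I^{(r)}_{(a,b)}$ gives $r^{a+b+1} I_{(a,b)}(t)$, matching the $(i,j)$-component of $\mathcal{S}_r(\X_{(a,b)})(t)$.

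The main thing to be careful about is that the matching must hold \emph{jointly} for all components (the Brownian motion and every $I_{(c,d)}$ with $(c,d)\preceq(a,b)$, $c\ge 1$), not merely marginally. This is automatic because all the integrals are constructed from the \emph{same} pair $(W_1^{(r)},W_2^{(r)})$, and the single application of Brownian scaling to this pair drives all subsequent identifications simultaneously. Beyond that, the argument is purely computational; the Stratonovich formulation is exactly what prevents any Itô correction term from entering the scaling calculation.
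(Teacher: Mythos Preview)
Your proof is correct and follows exactly the same approach as the paper: Brownian scaling applied jointly to $(W_1,W_2)$ together with the linearity of Stratonovich integration. The paper's proof is a two-line sketch invoking precisely these two ingredients, and you have simply spelled out the details.
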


\begin{proof}
This is a direct consequence of linearity of Stratonovich integration taken together with the Brownian scaling property
$$
\left(rW_i(t): t \ge 0\right) 
\quad{\StochEquiv}\quad
\left(W_i^{(r)}(r^2t): t \ge 0\right)
\qquad \text{ for } i=1,2\,.
$$
\end{proof}
Note that \(\preceq\) is a total ordering extension
of the partial order on \(\Delta_1\) given by the
\emph{scaling degree}, \(\degree((a,b))=\degree({I}_{(a,b)})=a+b+1\).
Monomial Stratonovich integrals of lower degree evolve in time faster than those of higher degree;
this is a key reason why our inductive arguments will work.

\medskip

% Technical preliminaries are completed by 
The following two technical lemmas complete the list of technical preliminaries.
\begin{lem}\label{lem:tail1}
Let $B_t$ be a standard Brownian motion adapted to a filtration $(\mathcal{F}_t:{t \ge 0})$.
Let $Y_t$ be a random process and let $\tau$ be a stopping time, both adapted to the same filtration.
% Then the following assertions hold:
\begin{itemize}
\item[(i)] Suppose there exists $\epsilon>0$
% \xNB[WSK1]{Lemma \ref{lem:tail1} (i): Have I got this right? Can $M$ depend on $\eps$? \textbf{SB1}: No. I fixed it. Please check.}
and some constants $C$, $\alpha>0, \beta>0$, not depending on $\eps$, 
such that for all \(M \ge 1\) and all \(t\geq\eps\),
\begin{align}
\Prob{\sup_{t \le \tau}|Y_t| \ge M} 
\quad\le\quad& CM^{-\alpha}\,,
\nonumber\\
\Prob{\tau \ge t} 
\quad\le\quad &C \; \left(\frac{\eps}{t}\right)^{\beta}\,.
\label{eq:eq:tail1-condition}
\end{align}
% where $t \ge \eps$.
Then there exists a further constant $C'$ not depending on $\eps$,
and positive indices \(\gamma'\), \(\gamma''\) depending only on \(\alpha\), \(\beta\)  such that
\begin{align}
\Prob{\sup_{t \le \tau}\left|\int_0^tY_s{\d}B_s\right| \ge x} 
\quad&\le\quad
C' \;
\left(\frac{\eps}{x^4}\right)^{\gamma'}
\qquad
\text{ for }x \ge \eps ^{1/4}\,,
\nonumber\\
\Prob{\sup_{t \le \tau}\left|\int_0^tY_s{\d}s\right| \ge x} 
\quad&\le\quad
C' \;
\left(\frac{\eps}{x^2}\right)^{\gamma''}
\qquad
\text{ for }x \ge \eps ^{1/2}\,.
\label{eq:eq:tail1-result}
\end{align}
Here we may take \(\gamma'=(\alpha \wedge \beta \wedge 1)/8\) and \(\gamma''=(\alpha \wedge \beta \wedge 2)/8\).
\item[(ii)] Suppose there exists $\epsilon>0$
% \xNB[WSK1]{Lemma \ref{lem:tail1} (i): Have I got this right? Can $M$ depend on $\eps$? \textbf{SB1}: No. I fixed it. Please check.}
and some constants $C$, $\alpha>0, \beta>0$, not depending on $\eps$, such that for all $M \ge \eps$ and all \(t\geq1\),
\begin{align}
\Prob{\sup_{t \le \tau}|Y_t| \ge M} 
\quad&\le\quad C \; \left(\frac{\eps}{M}\right)^{\alpha}\,,
\nonumber\\
\Prob{\tau \ge t} 
\quad&\le\quad C \; t^{-\beta}\,.
\label{eq:eq:tail2-condition}
\end{align}
% where $\alpha, \beta>0$, $M \ge \eps$, $t \ge 1$ and $C$ does not depend on $\eps$.
Then there exists a further constant $C'$ not depending on $\eps$,
and positive indices \(\gamma'\), \(\gamma''\) depending only on \(\alpha\), \(\beta\) such that
\begin{align}
\Prob{\sup_{t \le \tau}\left|\int_0^tY_s{\d}B_s\right| \ge x} 
\quad&\le\quad C' \; \left(\frac{\eps}{x}\right)^{\gamma'}
\text{ for }x \ge \eps\,,
\nonumber\\
\Prob{\sup_{t \le \tau}\left|\int_0^tY_s{\d}s\right| \ge x} 
\quad&\le\quad C' \; \left(\frac{\eps}{x^2}\right)^{\gamma''}
\text{ for }x \ge \eps^{1/2}\,.
\label{eq:eq:tail2-result}
\end{align}
Here we may take \(\gamma'=(\alpha \wedge \beta \wedge 1)/2\) and \(\gamma''=(\alpha \wedge \beta \wedge 2)/8\).
\end{itemize}
\end{lem}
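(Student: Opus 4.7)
The plan is to use a standard truncation-plus-martingale-maximal-inequality scheme. For Part (i), I would introduce localization parameters $T \ge \eps$ and $M \ge 1$ together with the stopping time $\tau_M = \inf\{t : |Y_t| \ge M\}$, and decompose
\begin{align*}
\Prob{\sup_{t\le\tau}\left|\int_0^t Y_s \d B_s\right| \ge x}
\;&\le\; \Prob{\tau > T} + \Prob{\sup_{t\le\tau}|Y_t| \ge M} \\
&\qquad + \Prob{\sup_{t\le T\wedge\tau_M\wedge\tau}\left|\int_0^t Y_s \d B_s\right| \ge x}.
\end{align*}
The first two summands are bounded by $C(\eps/T)^\beta$ and $CM^{-\alpha}$ respectively, directly from the hypotheses. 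On the truncated region the stochastic integral is a square-integrable martingale with quadratic variation at most $M^2 T$, so Doob's $L^2$ maximal inequality controls the third summand by a multiple of $M^2 T / x^2$. Altogether,
\[
\Prob{\sup_{t\le\tau}\left|\int_0^t Y_s \d B_s\right| \ge x}
\;\le\; C(\eps/T)^\beta + CM^{-\alpha} + \frac{M^2 T}{x^2}.
\]

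To obtain the stated bound I would then choose $T$ and $M$ as power functions of $x$ and $\eps$ (respecting $T \ge \eps$, $M \ge 1$) so as to balance the three terms, and use the constraint $x \ge \eps^{1/4}$ to re-express the result in the form $C'(\eps/x^4)^{\gamma'}$. For the Lebesgue integral the pathwise estimate
\[
\left|\int_0^{t\wedge\tau} Y_s \d s\right| \;\le\; \tau \cdot \sup_{s\le\tau}|Y_s|
\]
reduces matters to estimating $\Prob{\tau \sup_{s\le\tau}|Y_s| \ge x}$; the analogous truncation with $TM \le x$ gives a bound of the form $C(\eps/T)^\beta + CM^{-\alpha}$, and optimizing under $TM = x$ yields a bound of the required form $C'(\eps/x^2)^{\gamma''}$. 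Part (ii) follows exactly the same scheme, with the different hypothesized tails on $|Y|$ and $\tau$ leading to modified optimal exponents and to slightly different range constraints ($M \ge \eps$ and $t \ge 1$ respectively).

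The main technical obstacle is the exponent book-keeping in the optimization step. Verifying that the optimized bound dominates $(\eps/x^4)^{\gamma'}$ (respectively $(\eps/x^2)^{\gamma''}$) for the specific values of $\gamma'$, $\gamma''$ stated requires checking several subcases depending on which of $\alpha$, $\beta$, $1$ (respectively $2$) achieves the minimum, and on whether the balancing values of $T$, $M$ lie within their admissible ranges. The exponents $(\alpha\wedge\beta\wedge 1)/8$ and $(\alpha\wedge\beta\wedge 2)/8$ appear to be clean but conservative values chosen to cover all these cases uniformly; one could alternatively replace Doob's $L^2$ inequality with a BDG-type $L^p$ bound and optimize over $p$ as well to streamline the argument.
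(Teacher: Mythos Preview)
Your proposal is correct and follows essentially the same scheme as the paper: truncate via $\sigma_M=\inf\{t:|Y_t|\ge M\}$ and a time horizon $T$, bound the martingale term by BDG/Doob so that the three contributions are $CM^{-\alpha}$, $C(\eps/T)^\beta$ and $C''M^2T/x^2$, then optimise in $M$ and $T$. The paper carries out the optimisation explicitly (for part~(i) it takes $T=\sqrt{\eps x}$, $M=\eps^{-1/8}\sqrt{x}$ for the stochastic integral and $T=\sqrt{\eps}\,x^{1/4}$, $M=\eps^{-1/4}x^{1/4}$ for the time integral, then uses $\eps<1$ and $x\ge\eps^{1/4}$ or $x\ge\eps^{1/2}$ to absorb everything into a single power of $\eps^{1/4}/x$ or $\eps^{1/2}/x$), which is exactly the book-keeping you flagged as the remaining step. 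One small difference: for the Lebesgue integral the paper keeps a third Markov term $MT/x$ and optimises three terms, whereas you eliminate it by enforcing $TM\le x$ and optimise only two; both routes work and yours in fact yields a slightly sharper exponent than the conservative $(\alpha\wedge\beta\wedge2)/8$ stated.
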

\begin{proof}
Consider the stopping time
$$
\sigma_M \quad=\quad \inf\{t >0: |Y_t| \ge M\}\,.
$$
The Burholder-Davis-Gundy (BDG) inequality (see for example \citealp[p.~163]{KaratzasShreve-2012}),
respectively the monotonicity of the Lebesgue integral,
implies that, for any $M,T >0$, there exists a constant $C''>0$ not depending on $M,T$ such that
\begin{align*}
\Expect{\sup_{t \le T \wedge \sigma_M}\left|\int_0^tY_s{\d}B_s\right| ^2} 
\quad&\le\quad
C''\Expect{\int_0^{T \wedge \sigma_M}(Y_s)^2{\d}s} 
\quad\le\quad
C''M^2T\,,
\\
\Expect{\sup_{t \le T \wedge \sigma_M}\left|\int_0^tY_s{\d}s\right|} 
\quad&\le\quad \Expect{\int_0^{T \wedge \sigma_M}|Y_s|{\d}s} 
\quad\le\quad MT\,.
\end{align*}
Under the hypothesis of $(i)$ it follows that, for arbitrary $M \ge 1$ and $T \ge \eps$,
\begin{multline}\label{eq:Bin}
\Prob{\sup_{t \le \tau}\left|\int_0^tY_s{\d}B_s\right| > x} 
 \quad\le \quad
 \Prob{\tau >\sigma_M} + \Prob{\tau >T} +
 \Prob{\sup_{t \le T \wedge \sigma_M}\left|\int_0^tY_s{\d}B_s\right| \ge x}\\
\quad\leq\quad
  CM^{-\alpha} + C \eps^{\beta}T^{-\beta} + C''M^2 T x^{-2}\,,
\end{multline}
where the last inequality follows from the hypothesis of $(i)$ together with a Markov inequality argument.
Similarly,
\begin{multline}\label{eq:Bout}
\Prob{\sup_{t \le \tau}\left|\int_0^tY_s{\d}s\right| \ge x}
\quad \le\quad
\Prob{\tau >\sigma_M} + \Prob{\tau >T} + 
\Prob{\sup_{t \le T \wedge \sigma_M}\left|\int_0^tY_s{\d}s\right| \ge x}\\
\quad \le \quad
CM^{-\alpha} + C \eps^{\beta}T^{-\beta} + MTx^{-1}\,.
\end{multline}
% here using the Markov inequality in place of the Tchebychev inequality.
The first assertion of $(i)$ now follows by optimization. To be explicit, set $T=\sqrt{\eps x}$ and $M=\eps^{-1/8}\sqrt{x}$ in \eqref{eq:Bin}
and use $\eps \in (0,1)$ (second inequality) followed by $x \ge \eps^{1/4}$ (third inequality)
to obtain
\begin{multline*}
\Prob{\sup_{t \le \tau}\left|\int_0^tY_s{\d}B_s\right| > x} 
\quad\le\quad
C\left(\frac{\eps^{1/4}}{x}\right)^{\alpha/2} + C\left(\frac{\eps}{x}\right)^{\beta/2} + C''\left(\frac{\eps^{1/2}}{x}\right)^{1/2}
\\
\quad\le\quad
C\left(\frac{\eps^{1/4}}{x}\right)^{\alpha/2} + C\left(\frac{\eps^{1/4}}{x}\right)^{\beta/2} + C''\left(\frac{\eps^{1/4}}{x}\right)^{1/2}
\quad\le\quad
3 \max\{C, C''\}\left(\frac{\eps^{1/4}}{x}\right)^{(\alpha \wedge \beta \wedge 1)/2}\,.
\end{multline*}
% where the second inequality uses $\eps \in (0,1)$ and the third inequality uses $x \ge \eps^{1/4}$.
The second assertion of $(i)$ follows similarly: set $T=\sqrt{\eps}x^{1/4}$ and $M=\eps^{-1/4}x^{1/4}$ in \eqref{eq:Bout},
and use $\eps \in (0,1)$ (second inequality) followed by $x \ge \eps^{1/2}$ (third inequality)
to obtain
\begin{multline*}
\Prob{\sup_{t \le \tau}\left|\int_0^tY_s{\d}s\right| \ge x}
\quad\le\quad
C\left(\frac{\eps}{x}\right)^{\alpha/4} + C\left(\frac{\eps^2}{x}\right)^{\beta/4} + \left(\frac{\eps^{1/2}}{x}\right)^{1/2}
\\
\quad\le\quad
C\left(\frac{\eps^{1/2}}{x}\right)^{\alpha/4} + C\left(\frac{\eps^{1/2}}{x}\right)^{\beta/4} + \left(\frac{\eps^{1/2}}{x}\right)^{1/2}
\quad\le\quad
3 \max\{C, 1\}\left(\frac{\eps^{1/2}}{x}\right)^{(\alpha \wedge \beta \wedge 2)/4}\,.
\end{multline*}
% where we have used $\eps \in (0,1)$ for the second inequality and $x \ge \eps^{1/2}$ for the last inequality.
The proof of $(ii)$ follows along similar lines (using $M = \sqrt{\eps x}$ and $T=\eps^{-1/2}\sqrt{x}$ for the first assertion and $M=\sqrt{\eps}x^{1/4}$ and $T=\eps^{-1/4}x^{1/4}$ for the second assertion).
\end{proof}
\begin{lem}\label{lem:tail2}
% Let $(\mathcal{F}_i)_{i \ge 1}$ be an increasing sequence of filtrations
Let $X_i, \tau_i$ be non-negative random variables adapted to a given filtration
% \xNB[WSK1]{Replaced ``an increasing sequence of filtrations'' by ``an underlying filtration'' again?!}
$(\mathcal{F}_i: {i \ge 1})$ 
and satisfying
\begin{align}\label{eq:Xdec}
\Prob{X_{i+1} > x \mid \mathcal{F}_i} \quad& \le\quad C_{\alpha}x^{-\alpha}\,,
\\
\label{eq:taudec}
\Prob{\tau_{i+1} > t \mid \mathcal{F}_i} \quad& \le\quad C_{\beta} t^{-\beta}\,,
\end{align}
for some $\alpha, \beta >0$ and $x, t \ge 1$, where $C_{\alpha}, C_{\beta}$ are positive constants that do not depend on $i$.
Then for any $\gamma < \alpha 
\wedge \beta$ there is $\eps_0 >0$ (depending on $\alpha$, $\beta$ and $\gamma$) such that
$$
\Prob{\tau_1 + \sum_{k=1}^{\infty} \eps_0^k (\Pi_{j=1}^kX_j)\tau_{k+1} >t}
\quad\le\quad
C'_{\beta} t^{-\gamma}
$$
for some constant $C'_{\beta}$ depending only on $\beta$, and $t \ge 1$.
\end{lem}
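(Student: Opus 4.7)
The plan is to bound the tail of each summand $\eps_0^k \bigl(\prod_{j=1}^k X_j\bigr)\tau_{k+1}$ via a Markov inequality applied to a suitable moment, and then to recombine the estimates through a union bound with geometrically decaying weights. Fix $\delta \in (\gamma, \alpha\wedge\beta)$ (taking $\delta=\gamma$ already suffices, but the flexibility is harmless). Integrating the conditional tail bound \eqref{eq:Xdec} against the trivial bound of $1$ on $[0,1]$ gives a uniform moment bound
$$
\Expect{X_{i+1}^\delta \mid \mathcal{F}_i} \;\le\; N_\delta \;:=\; 1 + \frac{\delta C_\alpha}{\alpha - \delta},
$$
and similarly $\Expect{\tau_{i+1}^\delta \mid \mathcal{F}_i} \le M_\delta$. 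Since $X_1,\dots,X_k$ are $\mathcal{F}_k$-measurable by adaptedness, repeated applications of the tower property yield the key bound
$$
\Expect{\eps_0^{k\delta}\bigl(\textstyle\prod_{j=1}^k X_j\bigr)^\delta \tau_{k+1}^\delta} \;\le\; \eps_0^{k\delta} M_\delta N_\delta^k.
$$

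Next I would choose geometric weights $\lambda_k = (1-\rho)\rho^k$ for $k\ge 0$, with $\rho \in (0,1)$ to be fixed, noting that $\sum_k \lambda_k = 1$, and apply the union bound
$$
\Prob{\tau_1 + \sum_{k \ge 1} \eps_0^k \textstyle\prod_{j=1}^k X_j\, \tau_{k+1} > t} \;\le\; \Prob{\tau_1 > \lambda_0 t} + \sum_{k \ge 1} \Prob{\eps_0^k \textstyle\prod_{j=1}^k X_j\, \tau_{k+1} > \lambda_k t}.
$$
Markov's inequality at moment $\delta$, combined with the moment bound above, controls the $k$-th summand by
$$
(1-\rho)^{-\delta} M_\delta\, t^{-\delta}\, \bigl(\eps_0^\delta N_\delta / \rho^\delta\bigr)^k.
$$
Choosing $\rho = 1/2$ and then $\eps_0 < \tfrac12 N_\delta^{-1/\delta}$ forces $\eps_0^\delta N_\delta/\rho^\delta < 1$, so the resulting series converges and the total is bounded by a constant multiple of $t^{-\delta}$. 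Since $\delta \ge \gamma$ and $t\ge 1$, this is $O(t^{-\gamma})$; the initial term $\Prob{\tau_1 > \lambda_0 t}$ is handled directly by \eqref{eq:taudec}, giving $O(t^{-\beta}) = O(t^{-\gamma})$.

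The only real subtlety lies in the coupled choice of the weight parameter $\rho$ and of the geometric shrinkage $\eps_0$: the factor $N_\delta^k$ produced by the iterated moment estimates on the $X_j$ must be strictly dominated by $(\eps_0/\rho)^{k\delta}$, which is exactly what forces $\eps_0$ to depend on $\alpha$, $\beta$, $\gamma$ (through $N_\delta$). The lack of conditional independence between $X_{i+1}$ and $\tau_{i+1}$ never creates a difficulty, because at each stage of the iterated conditioning only one new layer ($\tau_{k+1}$, then successively each $X_j$) is not yet $\mathcal{F}$-measurable, and each such layer is controlled uniformly by \eqref{eq:Xdec}--\eqref{eq:taudec}.
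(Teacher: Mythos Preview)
Your argument is correct, but it follows a genuinely different and more elementary route than the paper's. The paper does not pass to moments of $\tau_{k+1}$; instead it uses a \emph{random} threshold decomposition, comparing the series against $\sum_k 2^{-k}(\eps_0^k\Pi_k)^{1-\gamma/\beta}t$ so that, on the event where this auxiliary sum is at most $t^{\beta/\gamma-1}$, each term $\eps_0^k\Pi_k\tau_{k+1}$ can be controlled by the raw conditional tail bound \eqref{eq:taudec} on $\tau_{k+1}$ (giving $t^{-\beta}$ per term), while the complementary event is handled by the moment bound \eqref{eq:moment} on $\Pi_k$ alone. Your approach replaces this two-stage splitting by a single Markov inequality on the $\delta$-th moment of each summand, with deterministic geometric weights $\lambda_k$, which is cleaner and shorter. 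The price is that your final constant inherits a dependence on $\gamma$ (through $M_\delta=1+\delta C_\beta/(\beta-\delta)$), whereas the paper's random-weight trick yields a constant depending only on $\beta$ and $C_\beta$, matching the lemma's stated ``$C'_\beta$ depending only on $\beta$''. This discrepancy is immaterial for every application in the paper, where $\gamma$ is fixed, so your argument is a perfectly good substitute. One small point worth flagging: applying \eqref{eq:taudec} to $\tau_1$ at threshold $\lambda_0 t=t/2$ technically requires $t\ge 2$, but this is harmless since the conclusion is trivial for $t\in[1,2]$ after adjusting the constant.
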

\begin{proof}
Set $\Pi_k = \Pi_{j=1}^kX_j$ for $k \ge 1$, with $\Pi_0=1$.
% We will also write $C'_{\beta}$ for a generic positive finite constant that depends only on $\beta$.
Take any $\gamma < \alpha \wedge \beta$.
Using $\Expect{X_{i+1}^{\gamma} \mid \mathcal{F}_i} \le 1 + \int_1^{\infty}\Prob{X_{i+1}^{\gamma} > x\mid \mathcal{F}_i} dx$, it follows from \eqref{eq:Xdec} that $\Expect{X_{i+1}^{\gamma} \mid \mathcal{F}_i} \le 1 + \frac{\gamma C_{\alpha}}{\alpha-\gamma}< \infty$.
Hence, for $\eps>0$,
\begin{align}\label{eq:moment}
\Expect{(\eps^k\Pi_k)^{\gamma}} 
\quad\le\quad
\left[\eps\left(1 + \frac{\gamma C_{\alpha}}{\alpha-\gamma}\right)^{1/\gamma}\right]^{k\gamma}\,.
\end{align}
For any $\eps>0$, we can write for any $t \ge 1$,
\begin{multline}\label{eq:first}
\Prob{\sum_{k=0}^{\infty}\eps^k\Pi_k\tau_{k+1} > t^{\beta/\gamma}} \quad \le\quad
\\
\Prob{\sum_{k=0}^{\infty}\eps^k\Pi_k\tau_{k+1} > \sum_{k=0}^{\infty}2^{-k}(\eps^k\Pi_k)^{1-\frac{\gamma}{\beta}}t} + \Prob{\sum_{k=0}^{\infty}2^{-k}(\eps^k\Pi_k)^{1-\frac{\gamma}{\beta}} > t^{\frac{\beta}{\gamma}-1}}
\end{multline}
(adopting the convention that $\Prob{Z> \infty}=0$ for any random variable $Z$).
Take $\eps_0 >0$ satisfying $\eps_0 (1 + \frac{\gamma C_{\alpha}}{\alpha-\gamma})^{1/\gamma} = 4^{-\beta/\gamma}$.
Then
\begin{multline*}
\Prob{\sum_{k=0}^{\infty}\eps_0^k\Pi_k\tau_{k+1} > \sum_{k=0}^{\infty}2^{-k}(\eps_0^k\Pi_k)^{1-\frac{\gamma}{\beta}}t} 
\quad\le\quad
\sum_{k=0}^{\infty}\Prob{\tau_{k+1} > 2^{-k} (\eps_0^k\Pi_k)^{-\gamma/\beta}t}\\
\quad \le\quad 
\max\{1,C_{\beta}\}t^{-\beta}\sum_{k=0}^{\infty}2^{k\beta}\Expect{(\eps_0^k\Pi_k)^{\gamma}}
\quad \le\quad
\max\{1,C_{\beta}\} t^{-\beta} \sum_{k=0}^{\infty} 2^{-k\beta} 
\quad=\quad
C'_{\beta}t^{-\beta}\,,
\end{multline*}
where the second inequality is obtained using \eqref{eq:taudec}, and the third by using \eqref{eq:moment} together with the specific choice of $\eps_0$.
We have used $\max\{1,C_{\beta}\}$ in place of $C_{\beta}$ to account for the situation when $2^{-k} (\eps_0^k\Pi_k)^{-\gamma/\beta}t < 1$.
Furthermore
\begin{align*}
\Prob{\sum_{k=1}^{\infty}2^{-k}(\eps_0^k\Pi_k)^{1-\frac{\gamma}{\beta}} > t^{\frac{\beta}{\gamma}-1}}
\quad& \le\quad
\sum_{k=1}^{\infty}\Prob{(\eps_0^k\Pi_k)^{\gamma} > t^{\beta}}\\
\quad& \le\quad
t^{-\beta}\sum_{k=1}^{\infty}\Expect{(\eps_0^k\Pi_k)^{\gamma}}
\quad\le\quad
t^{-\beta}\sum_{k=1}^{\infty}4^{-k\beta} = C'_{\beta}t^{-\beta}\,,
\end{align*}
where we have used the Markov inequality to obtain the second inequality above and \eqref{eq:moment} together with the choice of $\eps_0$ for the third inequality.
 
The above estimates can be used with \eqref{eq:first} to show
$$
\Prob{\sum_{k=1}^{\infty}\eps_0^k\Pi_k\tau_{k+1} > t^{\beta/\gamma}} \quad\le\quad 2C'_{\beta}t^{-\beta}\,,
$$
which proves the lemma.
\end{proof}

%%%%%%%%%%%%%%%%%%%%%%%%%%%%%%%%%%%%%%%%%%%%%%%%%%%%%%%%%%%%
\section[Coupling BM(R2) and a single monomial Stratonovich integral]%
{Coupling BM($\Reals^2$) and a single monomial Stratonovich integral}\label{sec:coup}
In this section, we construct couplings of $(W_1, W_2, I_{(a,b)})$ and $(\tW_1, \tW_2, \tI_{(a,b)})$ 
for $(a,b) \in \Delta_n, a \ge 1, b \ge 0$.
The 
% couplings constructed for the 
cases $a+b=1$ (which implies $a=1$, $b=0$) and $a+b>1$ 
differ in complexity,
% will be somewhat different 
so we first consider the simpler case $a+b=1$ (Lemma \ref{lem:Heisenberg}).
% \xNB[WSK1]{Case $(a=1,b=0)=$  Heisenberg group}
This case is significantly easier to describe,
and corresponds to the case of Brownian motion on the Heisenberg group already treated in 
\cite{BenArousCranstonKendall-1995} and \cite{Kendall-2007} as noted in Remark \ref{rem:Heisen} below;
however the present
% underlying 
technique 
carries 
through to the case $a+b>1$ (Lemma \ref{lem:couple1}).
Thus, the construction given in the simplest non-trivial case (Lemma \ref{lem:Heisenberg})
is a good model for the general approach. 
Lemma \ref{lem:couple1} deals with coupling just one monomial Stratonovich integral of more general form,
but this is an essential component of the inductive argument that will be required
to establish coupling for a finite set of monomial Stratonovich integrals in Section \ref{sec:coup-multiple}.
% behind our coupling methods.

We will use some further notation, namely
% which will be used in subsequent calculations.
$\Delta W_1=W_1-\tW_1$ and $\Delta I_{(a,b)}=I_{(a,b)}-\tI_{(a,b)}$.
% Furthermore, 
% % ``s.c." will denote 
% a phase of synchronous coupling of the Brownian motions $W_1$ and $\tW_1$ will be described as ``s.c'',
% and a phase of reflection coupling of $W_1$ and $\tW_1$ will be described as ``r.c.''.

%%%%%%%%%%%%%%%%%%%%%%%%%%%%%%%%%%%%%%%%%%%%%%%%%%%%%%%%%%%%
\subsection{Case of simplest non-trivial monomial Stratonovich integral}\label{sec:coup-simple}
The next lemma establishes a coupling result
based on a driving \(2\)-dimensional Brownian motion \(W_1, W_2\) plus the single monomial stochastic integral 
\(I_{(1,0)}\).
\begin{lem}\label{lem:Heisenberg}
For any $\gamma < \frac{1}{3}$, there exists a successful Markovian coupling 
$\mathbb{P}_{\gamma}$ of $(W_1, W_2, I_{(1,0)})$ 
and $(\tW_1, \tW_2, \tI_{(1,0)})$ started from distinct points $(w_1, w_2, \fki)$ and $(w_1, w_2, \tfki)$ respectively, 
with coupling time $\CouplingTime_{\gamma}$, satisfying
\begin{align}\label{eq:Htail}
\sup_{w_1, w_2, | \fki-\tfki | \le 1}\mathbb{P}_{\gamma}\left[\CouplingTime_{\gamma} > t\right] \le C_{\gamma}t^{-\gamma}, \ \ t\ge1.
\end{align}
\end{lem}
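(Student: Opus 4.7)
The plan is to couple $W_2$ and $\widetilde W_2$ synchronously throughout, so $\Delta W_2 \equiv 0$. Since $W_1$ and $W_2$ are independent, the Stratonovich integral equals the It\^o integral, whence $\d \Delta I_{(1,0)} = \Delta W_1 \,\d W_2$. All control then reduces to an adapted choice between synchronous coupling ($\d \Delta W_1 = 0$) and reflection coupling ($\d \Delta W_1 = 2\,\d W_1$) for $W_1$ and $\widetilde W_1$. Since $\Delta W_1(0) = \Delta W_2(0) = 0$ and only $\Delta I_{(1,0)}(0)=\fki-\tfki$ is nonzero, coupling succeeds the first time both $\Delta W_1$ and $\Delta I_{(1,0)}$ vanish simultaneously, after which full synchronous coupling of $(W_1,W_2)$ keeps them agreed forever.

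The construction proceeds through an infinite sequence of \emph{cycles}. Let $c_k$ denote the value of $\Delta I_{(1,0)}$ at the start of cycle $k$, with $c_0 = \fki-\tfki$. Fix a small $\eps>0$ and set target height $h_k = \eps|c_k|^{1/2}$. Cycle $k$ comprises three subphases: (a) reflection coupling of $W_1$ until $|\Delta W_1|$ first reaches $h_k$; (b) synchronous coupling, so $\Delta W_1 \equiv \pm h_k$ stays constant while $\Delta I_{(1,0)}$ evolves as a scaled Brownian motion $\pm h_k W_2$, run until $\Delta I_{(1,0)}$ first returns to $0$; (c) reflection coupling to drive $\Delta W_1$ back to $0$. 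During Phase (c) the martingale $\int \Delta W_1 \,\d W_2$ has quadratic variation $O(h_k^4)$, so at its end $\Delta W_1 = 0$ while $\Delta I_{(1,0)}$ has picked up a residual of typical size $O(h_k^2)=O(\eps^2|c_k|)$. Thus $|c_k|$ contracts geometrically in expectation, and the cycle endpoints accumulate in a.s.\ finite total time at which both $\Delta W_1$ and $\Delta I_{(1,0)}$ vanish by continuity.

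The power-law tail then follows from Brownian scaling (Lemma~\ref{lem:scaling}): since $I_{(1,0)}$ has scaling degree $2$, the rescaling $r_k=|c_k|^{-1/2}$ converts cycle $k$ into a scale-free cycle whose duration $\tau'_k=\tau_k/|c_k|$ and residual ratio $X_k=|c_{k+1}/c_k|$ have laws independent of $k$. Standard Brownian first-passage estimates combined with the It\^o-integral tail bounds of Lemma~\ref{lem:tail1} yield $\mathbb{P}[\tau'_k>t]\le Ct^{-\beta}$ and $\mathbb{P}[X_k>x]\le Cx^{-\alpha}$ for explicit $\alpha,\beta$. Writing $\CouplingTime_\gamma=\sum_k\tau_k=|c_0|\sum_k\bigl(\prod_{j<k}X_j\bigr)\tau'_k$ and invoking Lemma~\ref{lem:tail2} produces $\mathbb{P}_\gamma[\CouplingTime_\gamma>t]\le C_\gamma t^{-\gamma}$ for every $\gamma<\alpha\wedge\beta$; tuning $\eps$ so that $\alpha\wedge\beta>1/3$ then delivers \eqref{eq:Htail} uniformly over $w_1,w_2$ (by translation invariance of Brownian motion) and over $|\fki-\tfki|\le 1$.

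The main obstacle is that no single cycle can produce exact simultaneous vanishing of $\Delta W_1$ and $\Delta I_{(1,0)}$: the joint diffusion is non-degenerate off $\{\Delta W_1=0\}$, so direct first-passage to the origin has probability zero. The infinite cycle construction circumvents this by geometric contraction with accumulation in finite time; keeping scaling constants uniform across cycles and tracking the heavy-tailed Brownian hitting-time distributions accurately via Lemmas~\ref{lem:tail1} and~\ref{lem:tail2} is the main technical challenge.
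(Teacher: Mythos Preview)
Your proposal is correct and follows essentially the same route as the paper: synchronous coupling of $W_2$, three-phase cycles (reflect $W_1$ to a small height, synchronise to kill $\Delta I_{(1,0)}$, reflect to kill $\Delta W_1$), Brownian rescaling between cycles, and Lemma~\ref{lem:tail2} to sum the resulting perpetuity. The paper normalises first (so $|\Delta I_{(1,0)}|=1$) and uses height $R^{-1}$; your $h_k=\eps|c_k|^{1/2}$ is the same choice before normalisation.

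One small correction: the exponents $\alpha,\beta$ for the scale-free residual ratio and cycle duration do \emph{not} depend on $\eps$; the paper obtains $\alpha=2/5$ and $\beta=1/3$, and these are determined by the Brownian first-passage and It\^o-isometry estimates, not by the tuning parameter. What tuning $\eps$ (equivalently taking $R$ large) actually does is make the \emph{typical} contraction factor $X_k$ small enough that the $\eps_0$ required by Lemma~\ref{lem:tail2} is achieved. So the correct statement is: for any $\gamma<\alpha\wedge\beta=1/3$, choose $\eps$ small enough that Lemma~\ref{lem:tail2} applies with that $\gamma$. Also, the paper does not invoke Lemma~\ref{lem:tail1} here but uses direct $L^2$/BDG estimates for the Phase~1 and Phase~3 residuals; either route works.
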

\begin{proof}
% Before getting into the technical details of the proof, w
We first outline the general proof strategy.
At all times
% We will synchronously couple 
$W_2$ and $\tW_2$ 
will be synchronously coupled; hence
% at all times and thus, 
$W_2(t)=\tW_2(t)$ for all $t\ge 0$.
%(The possibility of doing this is exactly what distinguishes the case of an underlying two-dimensional Brownian motion;
%we anticipate that one of the challenges of dealing with higher-dimensional Brownian motions will be to deal with
%complexity entailed by no longer being able to keep one coordinate synchronously coupled and in agreement for all time.)%
% \xNB[WSK1]{\textcolor{red}{\textbf{STRATEGY:}} Added a parenthesis at start of proof of Lemma \ref{lem:Heisenberg} to signal a potential major difficulty when we eventually try to generalize to higher dimensions: promote this to a remark?}
% By the 
Brownian scaling as given in Lemma \ref{lem:scaling}
% , it suffices to consider 
can be used to re-scale to a unit difference between the two stochastic integrals,
thus reducing all cases to the case of
starting points $(w_1,w_2,\fki)$ and $(w_1,w_2,\fki-1)$ for $w, i \in \Reals$.
% \xNB[WSK1]{Lemma \ref{lem:Heisenberg} proof: very disconcerting for $i$ to ba a real number! \textcolor{blue}{Sayan, possible to do something about this?!}}
The coupling decomposes naturally into disjoint {cycles}.
Each cycle consists of a patterned alternation between phases of reflection and synchronous coupling 
for $W_1$ and $\widetilde{W}_1$,
so that the distance between the coupled processes $(W_1, W_2, I_{(1,0)})$ and $(\tW_1, \tW_2, \tI_{(1,0)})$ at the end of the cycle 
is roughly a fixed proportion of the distance between them at the start of the cycle.
At the end of each cycle, the next cycle is constructed by applying the same coupling strategy as the previous cycle 
after appropriately re-scaling the coupled processes \emph{via} Lemma \ref{lem:scaling},
so that there is unit re-scaled distance between them at the start of the next cycle.
% becomes one.
Lemma \ref{lem:tail2} is then used to show that the end-points of these cycles have an accumulation point which corresponds to a finite coupling time.
As the coupling strategy within each cycle is the same (modulo re-scaling), 
it is sufficient to describe in detail only the construction of the first cycle. 
% It is worth pointing out here 
Note that iterated cycles and re-scaling to achieve successful coupling 
% has been previously used 
have been used
to couple Kolmogorov diffusions by \citet{BenArousCranstonKendall-1995}, \citet{KendallPrice-2004}, \citet{BanerjeeKendall-2015}.
% \xNB[WSK1]{Proof strategy of Lemma lem:Heisenberg: compare this cyclic strategy to that expressed for Kolmogorov diffusions in BenArousCranstonKendall-1995,KendallPrice-2004,BanerjeeKendall-2015? SB1: added a line. Please check.}

%The first phase of the cycle is 
%to impose reflection coupling on $W_1$ and $\tW_1$ till $\Delta W_1$ hits some specified small threshold.
%After this hitting time, $W_1$ and $\tW_1$ are synchronously coupled for a while.
%The effect of synchronous coupling in this new phase is that (a) the difference \(\Delta W_1\)
%between the first coordinates of the two driving Brownian motions remains constant,
%and (b) therefore the difference between $\int W_1{\d}W_2$ and $\int \tW_1 {\d}W_2$ behaves like a Brownian motion.
%This synchronous coupling phase is continued till the first time the difference between these integrals hits zero.
%After this hitting time, $W_1$ and $\tW_1$ are reflection coupled once again till they meet.
%As the difference between $W_1$ and $\tW_1$ in this phase starts at the small threshold achieved at the end of the first phase,
%$W_1$ and $\tW_1$ will meet quite quickly,
%and therefore the difference between the integrals will remain small (measured in an appropriate sense).

%----------------------------
\medskip\noindent
\textbf{A: {Description of the first cycle}}\\
%A
As noted before, the scaling argument represented by Lemma \ref{lem:scaling}
shows there is no loss of generality in assuming that $|\Delta I_{(1,0)}(0)| = 1$.
Choose and fix a constant $R>1$. The estimates derived for the first cycle will be uniform with respect to $R>1$ and an optimal choice of $R$ will be made at the end of the proof.
In the proof, $C, C_1, C_2, \dots$ will denote generic positive constants whose values will not depend on $R, w_1, w_2, \fki$ 
and whose value might change from line to line.
% \xNB[WSK1]{I'd prefer not to use generic line-varying constants! \textcolor{blue}{Let's consider later how we might avoid this?}}
The first cycle consists of three phases whose end-points are defined by the following stopping times:
\begin{tabbing}
1:  \quad\= \(T_1 \quad=\quad \inf\{t \ge 0: |\Delta W_1(t)| = R^{-1}\}\),\qquad\= reflection till \(W_1-\widetilde{W}_1\) hits \(\pm R^{-1}\);\\
2:       \> \(T_2 \quad=\quad \inf\{t \ge T_1: \Delta I_{(1,0)}(t)=0\}\),          \> synchronous till \(I_{(1,0)}-\widetilde{I}_{(1,0)}\) hits \(0\);\\
3:       \> \(T_3 \quad=\quad \inf\{ t \ge T_2: \Delta W_1(t) = 0\}\),        \> reflection till \(W_1-\widetilde{W}_1\) hits \(0\).
\end{tabbing}
% \begin{align*}
% T_1 \quad&=\quad \inf\{t \ge 0: \text{r.c.}, |\Delta W_1(t)| = R^{-1}\},\\
% T_2 \quad&=\quad \inf\{t \ge T_1: \text{s.c.}, \Delta I_{(1,0)}=0\}, \\
% T_3 \quad&=\quad \inf\{ t \ge T_2: \text{r.c.}, |\Delta W_1(t)| = 0\}.
% \end{align*}

%----------------------------
\medskip\noindent
\textbf{Phase 1:} 
Using Brownian scaling, 
% the 
independent
Brownian
increments,
% property 
% of Brownian motion, 
and eigenvalues of the 
% Dirichlet-
Laplacian with Dirichlet
boundary conditions 
% \xNB[WSK1]{``Dirichlet-Laplacian'': is there a clearer way to say this?}
on \([-1,1]\), 
together with \(W_1(0)=\widetilde{W}_1(0)\),
it follows that
\begin{align}\label{eq:Htime1}
\Prob{T_1>t/R^2} \quad\le\quad C e^{-\pi^2 t /8}\,.
\end{align}
Now consider the increment of $\Delta I_{(a,b)}$ over the time interval $[0, T_1]$.
Since the second Brownian coordinates satisfy \(W_2=\widetilde{W}_2\) throughout the entire coupling,
and the first Brownian coordinates \(W_1\), \(\widetilde{W}_1\) are reflection coupled hence independent of \(W_2=\widetilde{W}_2\), 
we may re-write the Stratonovich integral
for the increment as an It\^o integral:
$$
\Delta I_{(1,0)}(T_1) - \Delta I_{(1,0)}(0) \quad=\quad \int_{0}^{T_1} \Delta W_1(s){\d}W_2(s)
\,.
$$
On the other hand, $\sup_{t \in [0, T_1]}|\Delta W_1(t)| =\frac{1}{R}$ by definition of \(T_1\).
Using the \(L^2\)-isometry of the It\^o integral,
% Applying the BDG inequality \cite[p.163]{KaratzasShreve-2012}, we get
$$
\Expect{\left|\int_{0}^{T_1} \Delta W_1(s){\d}W_2(s)\right|^2} 
\quad=\quad  
\Expect{\int_{0}^{T_1} (\Delta W_1(s))^2{\d}s} 
\quad\le\quad 
\frac{1}{R^2}\Expect{T_1} 
\quad\le\quad \frac{C}{R^4}\,,
$$
where the last inequality follows from \eqref{eq:Htime1} using \(\Expect{T_1}=\int_0^\infty\Prob{T_1>t}{\d}t\).

% Thus, applying the Tchebychev inequality, 
By a Markov inequality argument, it now follows
for any $x >0$ that
\begin{align*}
\Prob{|\Delta I_{(1,0)}(T_1) - \Delta I_{(1,0)}(0)| > x} \quad\le\quad \frac{C}{R^4x^2}\,.
\end{align*}
But we have assumed that $|\Delta I_{(1,0)}(0)| = 1$, so for any $x \ge 2$
\begin{align}\label{eq:H1}
\Prob{|\Delta I_{(1,0)}(T_1)| > x} \quad\le\quad 
\Prob{|\Delta I_{(1,0)}(T_1) - \Delta I_{(1,0)}(0)| > x-1} \quad&\le\quad 
\frac{C}{R^4(x-1)^2}\nonumber\\
 &\;\le\;
\frac{4C}{R^4x^2}\,.
\end{align}

%----------------------------
\medskip\noindent
\textbf{Phase 2: }
Because we are constructing a Markovian coupling, we may
condition on the past of the driving Brownian motions till time $T_1$.
Under synchronous coupling 
the Stratonovich expression for the increment of $\Delta I_{(a,b)}$ over the time interval $[T_1, T_2]$
can again be re-written as an It\^o integral, only now \(\Delta W_1(s)=\Delta W_1(T_1)\) while \(s\in[T_1, T_2]\).
Thus in this time interval \(\Delta I_{(a,b)}(s)-\Delta I_{(a,b)}(T_1)=\Delta W_1(T_1) \times (W_2(s)-W_2(T_1))\).
Since \(\Delta W_1(T_1)=R^{-1}\) by construction of \(T_1\),
it follows that
$T_2-T_1$ has the same distribution as 
the hitting time of a one-dimensional Brownian motion on the level $-R\sgn(\Delta W_1(T_1)) \Delta I_{(1,0)}(T_1)$.
Thus, for \(x \ge 2\) and \(t>0\), we can assert that
\begin{multline*}
\Prob{T_2-T_1 >t}
\quad\le\quad \Prob{|\Delta I_{(1,0)}(T_1)| > x} + \Prob{T_2-T_1 >t \text{ and } |\Delta I_{(1,0)}(T_1)| \le x}\\
\quad\le\quad \frac{4C}{R^4x^2} + \frac{CRx}{\sqrt{t}}\,,
\end{multline*}
where the last inequality is a consequence of \eqref{eq:H1} 
and a hitting time estimate for Brownian motions derived from the reflection principle.
% \xNB[WSK1]{``Optimizing'' at end of Phase 2 of Cycle 1: I get a different answer (though it doesn't matter). \textbf{SB1}: addressed this. please check.}
Taking $x=t^{1/6}$ in the above expression and recalling that $R>1$,
\begin{equation*}
\Prob{T_2-T_1 >t} \quad\le\quad \frac{4C}{R^4t^{1/3}} + \frac{CR}{t^{1/3}} \quad \le \quad \frac{5CR}{t^{1/3}}
\qquad \text{for }t \ge 2^6\,.
\end{equation*}
The above expression gives a useful bound on the probability $\Prob{T_2-T_1 >t}$ only when $t \ge (5CR)^3$. 
We therefore adjust the above bound (using $C$ as a new generic positive constant):
\begin{equation}\label{eq:Htime2}
\Prob{T_2-T_1 >t} \quad\le\quad \frac{CR}{t^{1/3}}
\qquad \text{for }t \ge CR^3\,.
\end{equation}
Note that $\Delta I_{(1,0)}(T_2)=0$ follows from the definition of $T_2$.

%----------------------------
\medskip\noindent
\textbf{Phase 3: }
Using reflection coupling, and conditioning on the past at time \(T_2\),
we may view $T_3-T_2$ as the hitting time of level $\frac{1}{2R}$ by a standard Brownian motion.
Employing the reflection principle for Brownian motion
\begin{equation}\label{eq:Htime3}
\Prob{T_3-T_2 >t/R^2} \quad\le\quad 
\sqrt{\frac{2}{\pi}}\frac{1}{\sqrt{t}} \qquad \text{for } t \ge 1\,.
% , \ \ t \ge 1.
\end{equation}
Moreover, for $x > R^{-2}, H> R^{-2}$, 
and once again re-writing the Stratonovich integral of \(\Delta I_{(1,0)}\) as an It\^o integral,
\begin{align*}
\Prob{|\Delta I_{(1,0)}(T_3)| > x} \quad&\le\quad \Prob{T_3-T_2 >H} + \Prob{|\Delta I_{(1,0)}(T_3)| > x, T_3-T_2 \le H}
\\
\quad&\le\quad \Prob{T_3-T_2 >H} + \Prob{\sup_{T_2 \le t \le T_2 +H} \left|\int_{T_2}^t (\Delta W_1)(s){\d}W_2(s)\right| > x}
\\
\quad&\le\quad \sqrt{\frac{2}{\pi}}\frac{1}{R\sqrt{H}} +\frac{CH}{x^2} \Expect{\sup_{T_2 \le t \le T_2 +H}|\Delta W_1 (t)|^2}\\ 
&\qquad\qquad\qquad\quad\text{ (by } \eqref{eq:Htime3}, \text{ Tchebychev and BDG inequalities)}
\\
\quad&\le\quad \sqrt{\frac{2}{\pi}}\frac{1}{R\sqrt{H}} + \frac{CH}{x^2}\left(\frac{1}{R^2} + H\right)\\ 
&\quad\text{ (using } |\Delta W_1(T_2)|=R^{-1} \text{ and Doob's } L^2\text{-maximal inequality)}
\\
\quad&\le\quad  \sqrt{\frac{2}{\pi}}\frac{1}{R\sqrt{H}} + \frac{CH^2}{x^2} \ \ \left(\text{as } H> \frac{1}{R^2}\right).
\end{align*}
Taking $H=\frac{x^{4/5}}{R^{2/5}}$, we obtain a bound
% Thus, we get
\begin{equation}\label{eq:H2}
\Prob{|\Delta I_{(1,0)}(T_3)| > x} 
\quad\le\quad
\frac{C}{(R^2x)^{2/5}}\,, \qquad\text{ for } x > \frac{1}{R^2}\,.
\end{equation}
Moreover, the combined effect of the estimates in \eqref{eq:Htime1}, \eqref{eq:Htime2} and \eqref{eq:Htime3}
can be summarized as
% \xNB[WSK1]{Not at all sure about the lower bound on $t$ in \eqref{eq:Htime}! (But it probably doesn't matter.)}
% , we obtain
\begin{equation}\label{eq:Htime}
\Prob{T_3 > t} \quad\le\quad \frac{CR}{t^{1/3}}, \qquad\text{ for } t \ge CR^3\,.
\end{equation}
The estimates \eqref{eq:H2} and \eqref{eq:Htime} give bounds on 
the difference of the integrals $I_{(1,0)}$ and $\tI_{(1,0)}$ at the end of the first cycle and 
the time taken to complete the first cycle respectively.

%----------------------------
\medskip\noindent
\textbf{B: Describing subsequent cycles and successful coupling}\\
%
%Description of the overall coupling construction is completed by commenting on the effect of successive re-scaled cycles.
For $t \ge T_3$, define further stopping times \(T_{k}, k > 3\), such that for any $k \ge 1$, $$|\Delta I_{(1,0)}(T_{3k})|^{-1}T_{3k+j}, \ j=1,2,3,$$ is the time of completion of the \(j^\text{th}\) phase
of the first cycle constructed above for the re-scaled processes $$(|\Delta I_{(1,0)}(T_{3k})|^{-1/2}W_1(T_{3k}+t), |\Delta I_{(1,0)}(T_{3k})|^{-1/2}W_2(T_{3k}+t), |\Delta I_{(1,0)}(T_{3k})|^{-1}I_{(0,1)}(T_{3k}+t))_{t \ge 0}$$ and $$(|\Delta I_{(1,0)}(T_{3k})|^{-1/2}\widetilde{W}_1(T_{3k}+t), |\Delta I_{(1,0)}(T_{3k})|^{-1/2}\widetilde{W}_2(T_{3k}+t), |\Delta I_{(1,0)}(T_{3k})|^{-1}\widetilde{I}_{(0,1)}(T_{3k}+t))_{t \ge 0}$$ in place of $(W_1(t), W_2(t), I_{(0,1)}(t))_{t \ge 0}$ and $(\widetilde{W}_1(t), \widetilde{W}_2(t), \widetilde{I}_{(0,1)}(t))_{t \ge 0}$ respectively.

The concatenation of these cycles does in fact lead to a successful coupling.
% , we proceed in the following two steps:
The proof of this follows from two facts:
\begin{itemize}
\item[(i)] $\lim_{k \rightarrow \infty}\Delta I_{(1,0)}(T_{3k}) = 0$, 
meaning that the coupled processes, observed at the end-points of the cycles, 
come arbitrarily close as the number of cycles becomes large, and 
\item[(ii)] $\lim_{k \rightarrow \infty}T_{3k} < \infty$ almost surely, 
meaning that the end points of these cycles have a finite accumulation point \(T_\infty\), 
so that the concatenation completes in finite time.
\end{itemize}
% It then follows from continuity of Brownian motion and stochastic integrals that the coupling is in fact
% successful at time \(T_\infty\).
Continuity of Brownian motion and stochastic integrals then implies successful coupling at time \(T_\infty\).

We now demonstrate that these two facts follow from Lemma \ref{lem:tail2}.
Define
% \xNB[WSK1]{\textcolor{red}{\textbf{AHEM:}} $i$ occurs both as difference of stochastic integrals and as integer index! \textcolor{blue}{Sayan to check this is not a problem elsewhere!} \textbf{SB1}: fixed it here. Please check.}
\begin{equation}
\tau^*_k = \frac{T_{3k} - T_{3k-3}}{|\Delta I_{(1,0)}(T_{3k-3})|},\quad
X^*_k = \frac{R^2|\Delta I_{(1,0)}(T_{3k})|}{|\Delta I_{(1,0)}(T_{3k-3})|}, \quad \text{ for } k \ge 1\,,
\label{eq:perpetuity-definitions}
\end{equation}
where we take $T_0=0$.
% By
Applying
Lemma \ref{lem:scaling}
to \eqref{eq:H2}, 
the pair $(X^*_k, 1)$ satisfies the hypotheses of $(X_k, \tau_k)$ of Lemma \ref{lem:tail2} with $\alpha=2/5$ and any $\beta > 0$.
Thus, Lemma \ref{lem:tail2} implies that there is $R_0 > 1$ such that for all $R \ge R_0$,
$$
\sum_{k=1}^{\infty}R^{-2k}\left(\Pi_{j=1}^kX^*_j\right) \quad<\quad \infty, \ \text{almost surely}.
$$
Choosing $R \ge R_0$ in the description of the first cycle, $|\Delta I_{(1,0)}(T_{3k})| = R^{-2k}\left(\Pi_{j=1}^kX^*_j\right)$ 
and consequently, $\lim_{k \rightarrow \infty}\Delta I_{(1,0)}(T_{3k}) = 0$ almost surely.

To prove that the coupling is successful in finite time almost surely
and that the coupling time has a power law tail given by \eqref{eq:Htail}, 
% note that again by
apply
Lemma \ref{lem:scaling}
to \eqref{eq:H2} and \eqref{eq:Htime}:
$(X^*_k, \tau^*_k/R^3)$ satisfies the hypotheses of $(X_k, \tau_k)$ of Lemma \ref{lem:tail2} with $\alpha=2/5$ 
and $\beta = 1/3$.
Thus, by Lemma \ref{lem:tail2}, for any $0<\gamma < 1/3$, there is $R_{\gamma} > 1$ such that for any $R \ge R_{\gamma}$,
\begin{align}\label{eq:Heisenbergtime}
\Prob{\tau^*_1 + \sum_{k=1}^{\infty}R^{-2k}\left(\Pi_{j=1}^kX^*_j\right)\tau^*_{k+1} > R^3 t} \quad\le\quad C t^{-\gamma}\,.
\end{align}
Now observe the following product collapses because of the definitions expressed by \eqref{eq:perpetuity-definitions}:
\[
 R^{-2k} \left(\Pi_{j=1}^kX^*_j\right)\tau^*_{k+1}
 \quad=\quad
 (T_{3(k+1)}-T_{3k}) / \Delta I_{(0,1)}(T_0)\,.
\]
Thus, for any $0<\gamma< 1/3$,
the above coupling construction with $R = \operatorname{max}\{R_0,R_{\gamma}\}$
gives the required successful coupling satisfying \eqref{eq:Htail}.
\end{proof}

\begin{rem}\label{rem:Heisen}
Recall the Brownian motion in the Heisenberg group started at $(w_1,w_2, \fki)$, 
defined as the ${\Reals}^3$ valued process given by $$((W_1(t), W_2(t), \fki + \int_0^tW_1(s){\d}W_2(s) - \int_0^tW_2(s){\d}W_1(s)): t \ge 0),$$ 
where $(W_1, W_2)$ is a two-dimensional Brownian motion started at $(w_1,w_2)$.
Lemma \ref{lem:Heisenberg} is of independent interest as it gives a successful Markovian coupling of Brownian motions on the Heisenberg group 
started at $(w_1,w_2, \fki)$ and $(w_1,w_2, \tfki)$ with explicit bounds on the tail probabilities of the coupling time.
To see this, note that by the It\^o formula,
$$
W_1(t)W_2(t)-W_1(0)W_2(0)\quad = \quad \int_0^tW_1(s){\d}W_2(s) + \int_0^tW_2(s){\d}W_1(s)\,, \qquad \text{ for } t \ge 0.
$$
From this, we obtain
\begin{align*}
 \fki + \int_0^tW_1(s){\d}W_2(s) & - \int_0^tW_2(s){\d}W_1(s)\\
 \quad&=\quad 2\left(\frac{\fki}{2}+ \int_0^tW_1(s){\d}W_2(s)\right)\\
 & \quad \qquad - \left(\int_0^tW_1(s){\d}W_2(s) + \int_0^tW_2(s){\d}W_1(s)\right)\\
 \quad&=\quad 2\left(\frac{\fki}{2}+ \int_0^tW_1(s){\d}W_2(s)\right) - \left(W_1(t)W_2(t)-W_1(0)W_2(0)\right)\,.
\end{align*}
Thus, the successful coupling construction given in Lemma \ref{lem:Heisenberg} for
$(W_1, W_2, I_{(1,0)})$ and $(\tW_1, \tW_2, \tI_{(1,0)})$, started from $(w_1,w_2, \fki/2)$ and $(w_1,w_2, \tfki/2)$ respectively, 
is also a successful coupling of the corresponding Brownian motions on the Heisenberg group started from $(w_1,w_2, \fki)$ and $(w_1,w_2, \tfki)$.
Couplings of Brownian motions on the Heisenberg group have appeared in several papers in recent times: 
\cite{BenArousCranstonKendall-1995} and \cite{Kendall-2007} have constructed
successful Markovian couplings of Brownian motions for the Heisenberg group.
\citet[Theorem 3.1]{Kendall-2009d} established some coupling time distribution asymptotics for the coupling constructed in \cite{Kendall-2007}, 
under some limiting operation on the starting points.
% (see Theorem 3.1 in \cite{Kendall-2009d}).
But our result gives explicit bounds on the tail probabilities of the coupling time for each $t$ 
and each pair of starting points $(w_1,w_2, \fki)$ and $(w_1,w_2, \tfki)$ 
(in fact, this coupling can be extended to general pairs of distinct starting points $(w_1,w_2, \fki)$ 
and $(\tw_1,\tw_2, \tfki)$ and associated bounds can be derived).
Moreover,
% even in the asymptotic case 
% given in Theorem 3.1 in 
% of 
% \citet[Theorem 3.1]{Kendall-2009d},
the tail probabilities of the coupling time 
of 
\citet[Theorem 3.1]{Kendall-2009d}
decay at best like $t^{-1/6}$;
% , and thus, 
the rate in Lemma \ref{lem:Heisenberg} is significantly better.
We note here however 
(a) that the treatment by \cite{BenArousCranstonKendall-1995} and \cite{Kendall-2007} uses 
an invariant difference that 
% allows \cite{Kendall-2007} to generalize the result to 
permits a generalization which
couples all possible
stochastic areas for a \(d\)-dimensional Brownian motion \citep{Kendall-2007};
(b) that 
\citet[Lemma 3.1]{BanerjeeGordinaMariano-2016}
% \cite{BanerjeeGordinaMariano-2016} 
obtained a non-Markovian coupling for Brownian motions on the Heisenberg group 
started at $(w_1,w_2, \fki)$ and $(w_1,w_2, \tfki)$ that attains the total variation bound 
(the best possible bound on the tails of coupling time distribution), and 
% showed that this bound 
decays like $t^{-1}$, 
% \citep[Lemma 3.1]{BanerjeeGordinaMariano-2016}, 
which is significantly better than the bound in Lemma \ref{lem:Heisenberg}.
Moreover Markovian couplings cannot reach a bound that decays faster than $t^{-1/2}$ \citep[Remark 3.2]{BanerjeeGordinaMariano-2016}.
It would be interesting to investigate whether the bound $t^{-1/2}$ can be attained, or whether $t^{-1/3}$ is the best bound for Markovian couplings.
\end{rem}

%%%%%%%%%%%%%%%%%%%%%%%%%%%%%%%%%%%%%%%%%%%%%%%%%%%%%%%%%%%%\texttwosuperior\
\subsection{Case of general monomial Stratonovich integral}\label{sec:coup-general}
% \subsection{Coupling BM(\texorpdfstring{$\Reals^2$}{R²}) plus a general polynomial stochastic integral}\label{sec:coup-general}
%%
%For a fixed integer $n \ge 1$, we define the set $\Delta_n=\{(i,j) \in \Integers^2: 0 \le i, j,  i + j \le n\}$ and a function $f: \Delta_n \rightarrow \Integers$ by $f(i,j)=2ni + (2n+1)j$.
% We will use $f$ to define a total ordering $``\preceq "$ of $\Delta_n$ by $(i,j) \preceq (i',j')$ if $f(i,j) \le f(i',j')$.
% To verify that this is indeed a valid total ordering, we need to verify totality, antisymmetry and transitivity.
% Totality is immediate as $f$ takes values in the totally ordered set $\Integers$.
% Further, note that $f(i,j)=f(i',j')$ if and only if $i=i'$ and $j=j'$, implying antisymmetry.
% Also, $f(i_1, j_1) \le f(i_2,j_2)$ and $f(i_2, j_2) \le f(i_3,j_3)$ implies $f(i_1, j_1) \le f(i_3,j_3)$, verifying transitivity.
% It is easy to see that under this ordering, the maximal element in $\Delta_n$ is $(0,n)$.

The next lemma generalizes the previous coupling construction, establishing a coupling result
based on a driving \(2\)-dimensional Brownian motion plus a single monomial stochastic integral: $(W_1, W_2, I_{(a,b)})$ 
for a single fixed \((a,b) \in \Delta_n\) with \(a\ge 1\), \(b \ge 0\) and \(a+b>1\).
%This coupling again involves synchronous coupling of $W_2$ at all times and
%uses a patterned alternation of phases of synchronous and reflection coupling for $W_1$ and $\widetilde{W}_1$ 
%in such a way that we have good control over the supremum of $|W_1-\tW_1|$ till coupling happens. 
%The additional technical complication that arises when $a+b>1$ is that when we synchronously couple $W_1$ and $\tW_1$ after they have moved apart, 
%the difference between $I_{(a,b)}$ and $\tI_{(a,b)}$ is not a constant multiple of a standard Brownian motion. 
%To compensate, a \emph{renormalisation procedure} is employed.
% \xNB[WSK1]{Section \ref{sec:coup-general}: explain what has to be done \emph{differently} here. \textbf{SB1}: Added a line. See if OK.}
Recall from Section \ref{sec:TP} that $f(k,l)=2nk+ (2n+1) l$.

\begin{lem}\label{lem:couple1}
% For $n \ge 1$, recall the discrete triangle $\Delta_n=\{(i,j) \in \Integers^2: 0 \le i, j,  i + j \le n\}$.
For any $(a,b) \in \Delta_n$ with $a \ge 1$, $b \ge 0$, $a+b>1$, 
there exists $R_0>1$ such that for each $R \ge R_0$, 
we can obtain a successful Markovian coupling construction $\mathbb{P}_R$ 
of $(W_1, W_2, I_{(a,b)})$ and $(\tW_1, \tW_2, \tI_{(a,b)})$ starting from $(w,R w, \fki)$ and $(w, R w, \tfki)$ 
respectively, with coupling time $T_{R,(a,b)}$, such that:
\begin{itemize}
\item[(i)] There are positive constants \(\gamma\), \(C\) not depending on $R$ such that, for large $t$,
% \xNB[WSK1]{Probability bounded by $C/t^{-\gamma}$. Strictly we may not need to require $t\geq1$?! \textbf{SB1}: The bound is reasonable only when $t \ge 1$.}
\begin{equation}\label{eq:couple1-1}
\sup
% _{w, i, \widetilde{i} \in \Reals, |i-\widetilde{i}| \le 1} 
\left\{\ProbR{T_{R,(a,b)} >R^{4n+2}t}
\;:\;
w, \fki, \tfki \in \Reals, |\fki-\tfki| \le 1
\right\} 
\quad\le\quad Ct^{-\gamma}\,.
% \qquad \text{ when } t \ge 1\,.
\end{equation}
%For convenience, we will prove the above inequality for $t \ge 1$.

In the interval $[0,T_{R,(a,b)}]$ we identify the \emph{active region} $S_{R,(a,b)}$,
\begin{equation*}
%S_1\quad&=\quad\{t \le T_{R,(a,b)}: W_1(t)=\tW_1(t) \text{ and } W_2(t)=\tW_2(t)\},\\
S_{R,(a,b)}\quad=\quad \text{closure of }\{t \le T_{R,(a,b)}: W_1(t) \neq \tW_1(t)\}.
\end{equation*} 
$S_{R,(a,b)}$ depends on $R, a,b$; both $[0, T_{R,(a,b)}) \cap S_{R,(a,b)}$ 
and $[0, T_{R,(a,b)}) \setminus (\text{interior of } S_{R,(a,b)})$ are
% \xNB[WSK1]{Lemma \ref{lem:couple1}: characterise $S_1$, $S_{R,(a,b)}$ more completely? Are the intervals random? \textbf{SB1}: added random.}
unions of countable sequences of disjoint random closed intervals, where for 
each sequence of intervals the left-end-points of
the intervals form an increasing sequence.
Writing the total length of $S_{R,(a,b)}$ by $|S_{R,(a,b)}|$, the following holds for large $t$,
% \xNB[WSK1]{Can we use $S_1$ instead of $T^{(1)}_{R,(a,b)}=|S_1|$, etc? I think $S_1$ is key to understanding why our arguments work, and we should develop the exposition of this here. \textbf{SB1}: See if OK.}
% \xNB[WSK1]{The bound on $\ProbR{T^{(1)}_{R,(a,b)} >R^{4n+2}t}$ follows immediately from the previous bound on $\ProbR{T_{R,(a,b)} >R^{4n+2}t}$.}
% we have the following 
\begin{equation}\label{eq:couple1-2}
% \sup\left\{
% % _{w, i, \widetilde{i} \in \Reals, |i-\widetilde{i}| \le 1} 
% \ProbR{T^{(1)}_{R,(a,b)} >R^{4n+2}t}
% \;:\; w, i, \widetilde{i} \in \Reals, |i-\widetilde{i}| \le 1
% \right\} 
% \quad&\le\quad
% Ct^{-\gamma}
% % \qquad \text{ for } t \ge 1
% \,,\\
\sup\left\{
% _{w, i, \widetilde{i} \in \Reals, |i-\widetilde{i}| \le 1} 
\ProbR{|S_{R,(a,b)}| > t}
\;:\; w, \fki, \tfki \in \Reals, |\fki-\tfki| \le 1
\right\}
\quad\le\quad Ct^{-\gamma} 
% \qquad \text{ for } t \ge 1
\,.
\end{equation}
%For convenience, we will prove the above inequality for $t \ge 1$.
\item[(ii)] There are positive constants \(\alpha\), \(C\) not depending on $R$ such that for large $t$,
\begin{multline}\label{eq:couple1-3}
\sup\left\{
% _{w, i, \widetilde{i} \in \Reals, |i-\widetilde{i}| \le 1} 
\ProbR{\sup_{t \le T_{R, (a,b)}}|W_1(t)-\tW_1(t)|>x/R^{f(a-1,b)}}
\;:\; w, \fki, \tfki \in \Reals, |\fki-\tfki| \le 1
\right\}\\
\quad\le\quad Cx^{-\alpha} 
% \qquad \text{ for } x \ge 1
\,.
\end{multline}
%For convenience, we will prove the above inequality for $t \ge 1$.
\end{itemize}
\end{lem}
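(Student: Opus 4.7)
The plan is to follow the cyclic coupling template of Lemma~\ref{lem:Heisenberg}, generalized to the higher-order monomial $W_1^a W_2^b$ and the large starting value $W_2(0) = Rw$. Throughout the construction, $W_2$ and $\widetilde{W}_2$ are synchronously coupled, so $W_2(t) = \widetilde{W}_2(t)$ at all times. By Lemma~\ref{lem:scaling} we may reduce to $|\Delta I_{(a,b)}(0)| = 1$. Each cycle aims to bring $|\Delta I_{(a,b)}|$ down by a fixed $R$-dependent factor, after which Lemma~\ref{lem:scaling} is used again to rescale and start the next cycle.

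The basic three-phase template for the first cycle is: \emph{Phase 1}, reflection couple $W_1$ and $\widetilde{W}_1$ until $|\Delta W_1|$ hits a threshold $R^{-c}$ for a constant $c = c(a,b,n)$ to be chosen; \emph{Phase 2}, synchronously couple so that $\Delta W_1$ remains frozen at $\pm R^{-c}$, until $\Delta I_{(a,b)}$ hits zero; \emph{Phase 3}, reflection couple until $\Delta W_1 = 0$. For each phase I would derive (i) a duration tail bound via Brownian reflection-principle estimates and Dirichlet eigenvalue bounds, and (ii) a tail bound on the change in $|\Delta I_{(a,b)}|$ through the representation
\[
\Delta I_{(a,b)}(t) - \Delta I_{(a,b)}(s) \;=\; \int_s^t \Delta W_1(u)\cdot\Bigl(\sum_{k=0}^{a-1} W_1(u)^k \widetilde{W}_1(u)^{a-1-k}\Bigr) W_2(u)^b \;{\d}W_2(u)\,,
\]
controlled by the BDG inequality, Doob's $L^2$ maximal inequality, and truncation on the running maxima of $W_1, \widetilde{W}_1, W_2$ via Gaussian tail bounds. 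Lemma~\ref{lem:tail1} packages these truncations into clean power-law estimates.

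To separate the scales in \eqref{eq:couple1-1} and \eqref{eq:couple1-2}, the cycle must include an inactive waiting interval (synchronously coupling $W_1$ with $\Delta W_1 = 0$) between the active sub-phases, or equivalently organise the reflection and shift intervals as short bursts spaced by long inactive periods. The active-region estimate \eqref{eq:couple1-2} then counts only the intervals on which $W_1 \neq \widetilde{W}_1$, while \eqref{eq:couple1-1} also collects the inactive waiting time that dominates the total duration. The supremum bound \eqref{eq:couple1-3} follows because within each cycle $|\Delta W_1|$ is confined to $[0, R^{-c}]$, while across cycles the effective threshold shrinks geometrically under the renormalization of Lemma~\ref{lem:scaling}; the first cycle therefore dominates the across-cycles supremum, with the exponent matching $R^{f(a-1,b)}$ after all scalings are tracked.

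Concatenating cycles and verifying the hypotheses of Lemma~\ref{lem:tail2} with the per-cycle growth factor and normalised duration then yields the desired power-law tails \eqref{eq:couple1-1}, \eqref{eq:couple1-2}, \eqref{eq:couple1-3}, together with successful coupling at the almost-sure finite accumulation point of the cycle end-points. The main obstacle will be controlling the polynomial integrand $W_1^k\widetilde{W}_1^{a-1-k} W_2^b$ over the long time scale $R^{4n+2}$: since $W_2$ starts near $Rw$ and can grow to size of order $R^{2n+1}$ over such an interval, iterative truncation on high-probability events (nested applications of Lemma~\ref{lem:tail1}) is required to feed the Brownian hitting-time estimates of Phase~2. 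A secondary but delicate task is the careful bookkeeping of exponents --- choosing the threshold exponent $c$ and verifying that the time scale $R^{4n+2}$ and the $|\Delta W_1|$-scale $R^{f(a-1,b)}$ emerge consistently from the per-phase quadratic-variation computation and the recursive rescaling.
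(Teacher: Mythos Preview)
Your three-phase template does not survive the passage from $(a,b)=(1,0)$ to $a+b>1$, and the gap is in Phase~2. In the Heisenberg case the integrand during the synchronous phase is the constant $\Delta W_1(\tau_1)$, so $\Delta I_{(1,0)}$ is a time-changed Brownian motion and its hitting time of zero is a standard Brownian hitting time. For $a+b>1$ your displayed integrand $\Delta W_1\cdot\bigl(\sum_k W_1^k\widetilde W_1^{a-1-k}\bigr)W_2^b$ is \emph{not} constant: it vanishes whenever $W_1$ or $W_2$ does, and the supremum in the statement is over all $w$, including $w=0$. With $w$ small the quadratic variation of $\Delta I_{(a,b)}$ accrues arbitrarily slowly and there is no uniform tail bound for the time to hit zero. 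You also omit the Stratonovich correction $\tfrac{b}{2}\int (W_1^a-\widetilde W_1^a)W_2^{b-1}\,{\d}s$, which for $b\ge 1$ is of the same order as the martingale part and must be estimated separately.

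The paper's construction repairs this with two extra phases and a different Phase~3 stopping rule, giving a five-phase cycle. A preliminary synchronous phase (ending at $\theta_1$) first waits until $W_2=RW_1$ \emph{and} $|W_1|\ge R^{2n}$, so that both factors in the integrand are large and in a fixed ratio. The reflection threshold in Phase~2 is then calibrated as $|W_1(\theta_1)|^{-(a+b-1)}R^{-b}$, chosen so that $\Delta W_1\cdot W_1(\theta_1)^{a-1}W_2(\theta_1)^b$ has magnitude exactly $1$. Crucially, Phase~3 does \emph{not} wait for $\Delta I_{(a,b)}=0$: instead it runs synchronously for a predetermined $W_2$-increment of size $1/a$ (with an explicit sign), which produces $\Delta I_{(a,b)}(\eta_1)-\Delta I_{(a,b)}(\tau_1)\approx -1$ up to an error that is $O(R^{-\delta})$ with high probability (this is where the ratios $W_i(t)/W_i(\theta_1)\approx 1$ are used). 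After reflecting $\Delta W_1$ back to zero and returning to the line $W_2=RW_1$, one has $|\Delta I_{(a,b)}(\beta_1)|\lesssim R^{-\delta}$ rather than $0$; the geometric decay across cycles then comes from Lemma~\ref{lem:tail2} exactly as you outline. Finally, your claim that $|\Delta W_1|$ is confined to $[0,R^{-c}]$ within a cycle is false even in your own scheme: during the final reflection phase $\Delta W_1$ is a Brownian motion started at $\pm R^{-c}$ and can overshoot to any level before hitting zero; the paper handles this with the gambler's-ruin estimate $\Prob{\sup|\Delta W_1|\cdot|W_1(\theta_1)|^{a+b-1}R^b\ge x}=1/x$, and it is this estimate (not confinement) that feeds into \eqref{eq:couple1-3}.
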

\noindent For convenience, we will prove the inequalities \eqref{eq:couple1-1}, \eqref{eq:couple1-2} and \eqref{eq:couple1-3} for $t \ge 1$.
\begin{proof}
As before, $W_2$ and $\tW_2$ will be synchronously coupled at all times so we may take $W_2=\tW_2$.
Brownian scaling (Lemma \ref{lem:scaling}) can be applied to ensure the monomial stochastic integrals
differ by 1: so it suffices to consider starting points $(w,Rw,\fki)$ and $(w,Rw,\fki-1)$ for $w, \fki \in \Reals$.
Let $\gamma, \delta, C, C_1, C_2 \dots$ be generic positive constants not depending on $R, w, \fki$,
(but often depending on \(a\) and \(b\))
whose values might change from line to line.
% \xNB[WSK1]{``Generic constants'' make me nervous. Better to be explicit if possible: how do we \emph{know} they don't depend on $R, w, i$?!}

%In the coupling construction, $W_2$ and $\tW_2$ will be synchronously coupled at all times and thus $W_2(t)=\tW_2(t)$ for all $t \ge 0$.
% We will write ``s.c." for the phase where the Brownian motions $W_1$ and $\tW_1$ are synchronously coupled, 
% and ``r.c." for the phase where $W_1$ and $\tW_1$ are reflection coupled.%
% \NB[WSK1]{I wonder whether we really need the abbreviations ``s.c.'' and ``r.c.''?!}
% Also, we will denote by $\gamma, \delta, C, C_1, C_2 \dots$ generic positive constants, that do not depend on $R, w, i$, whose values might change from line to line.

The proof uses some martingale estimates, so we will use the decomposition of the Stratonovich integral
$I_{(a,b)}$ in It\^o integral form:
\begin{align}\label{eq:itostrat}
I_{(a,b)}(t)\quad=\quad I_{(a,b)}(0) +
\int_0^t{W_1(s)^a}{W_2(s)^b}{\d}W_2(s) + \frac{b}{2}\int_0^t{W_1(s)^a}{W_2(s)^{b-1}}{\d}s\,.
\end{align}
In contrast with the case of Lemma \ref{lem:Heisenberg},
here the Stratonovich integral has a drift component if \(b\geq1\).

As in the previous lemma,
the coupling decomposes into disjoint {cycles}, and the successive cycles are connected via scaling.
We describe the first cycle and then
discusses the total effect of this and subsequent cycles on finiteness and
moment estimates for the coupling time.

%----------------------------
\medskip\noindent
\textbf{A: Description of the first cycle}\\
The first cycle consists of five phases.
The coupling strategy alternates between synchronous coupling and reflection coupling of $W_1$ and $\tW_1$ between the phases.
We will first describe each phase in terms of an arbitrary value of the tuning parameter $R \ge R_0>1$.
The estimates derived for the first cycle will hold uniformly with respect to $R>1$ and the appropriate lower bound $R_0$ for $R$ will arise in the course of the proof and be specified at the end of the coupling construction.
The end-points of the five phases are defined by the following stopping times.
Initially \(W_1(0)=\tW_1(0)\) (\(W_2=\tW_2\) throughout.)
% \xNB[WSK1]{Add to the following table what $W_1$ and $\tW_1$ behave like at each stage.}
\begin{tabbing}
1: \quad\=  \(\theta_1=\inf\{t \ge 0: W_2(t)=R W_1(t) \text{ and } |W_1(t)| \ge R^{2n}\}\),\quad\= synchronous till \(W_1\)\\
        \>\> hits \(R^{-1} W_2\) \\
        \>  \>\textbf{and} \(|W_1| \ge R^{2n}\), and\\
        \>  \>note \(W_1(\theta_1)=\tW_1(\theta_1)\);\\
2:      \>  \(\tau_1=\inf\left\{ t \ge \theta_1: |\Delta W_1(t)| =\frac{1}{|W_1(\theta_1)|^{a+b-1}R^b}\right\}\),\> reflection till \(\Delta W_1\) hits\\
        \>\> \(\pm\frac{1}{|W_1(\theta_1)|^{a+b-1}R^b}\),\\
        \>  \>and note \\
        \>  \>\(W_1(\tau_1)-W_1(\theta_1)\)\\
        \>\> \(=-(\tW_1(\tau_1)-\tW_1(\theta_1))\);\\
3:      \>  \(\eta_1=\inf\{t \ge \tau_1: W_2(t)=W_2(\tau_1)-a^{-1}\sgn(\Delta W_1(\tau_1))(\sgn(W_1(\theta_1)))^{a+b-1} \}\),\\
        \>  \> synchronous till \(W_2-W_2(\tau_1)\)\\
        \>\> hits\\
        \>\>  \(-\frac{\sgn(\Delta W_1(\tau_1))(\sgn(W_1(\theta_1)))^{a+b-1}}{a}\),\\
        \>  \>and note\\
        \>  \>\(W_1(\eta_1)-W_1(\tau_1)\)\\
        \>\> \(=\tW_1(\eta_1)-\tW_1(\tau_1)\);\\
4:      \>  \(\lambda_1=\inf\{t \ge \eta_1: \Delta W_1(t) = 0\}\),\> reflection till \(\Delta W_1\) hits \(0\),\\
        \>  \>and note \(W_1(\lambda_1)=\tW_1(\lambda_1)\);\\
5:      \>  \(\beta_1=\inf\{t \ge \lambda_1: W_2(t)=R W_1(t)\}\),\> synchronous till \(R W_1-W_2\)\\
         \>\> hits \(0\),\\
        \>  \>and note \(W_1(\beta_1)=\tW_1(\beta_1)\).
\end{tabbing}
Note that the first and last phases both use synchronous coupling. 
However we do not amalgamate these across cycles,
since at the \(\beta_k\) times we have \(R W_1=W_2\) as well as \(W=\tW\).
% \TBC[IDEA]{Could we illustrate the phases with a Figure, perhaps with an actual simulation?!}
%Note that in the definition of $\tau_1$ above, $|\Delta W_1(\tau_1)|= \frac{1}{R}$ only when $a=1, b=0$, 
%or equivalently when $a+b=1$.
%Otherwise the (post \(\theta_1\)) hitting barrier for $|\Delta W_1|$ depends on \(|W_1(\theta_1)|\).

% Phase end-points are given by the following stopping times:
% \begin{align*}
% \theta_1 \quad&=\quad \inf\{t \ge 0: \text{s.c.}, W_1(t)=R^{-1} W_2(t) \text{ and } |W_1(t)| \ge R^{2n}\},\\
% \tau_1 \quad&=\quad \inf\left\lbrace t \ge \theta_1: \text{r.c.}, |\Delta W_1(t)| =\frac{1}{|W_1(\theta_1)|^{a+b-1}R^b}\right\rbrace,\\
% \eta_1 \quad&=\quad \inf\{t \ge \tau_1: \text{s.c.}, W_2(t)-W_2(\tau_1)=-a^{-1}\sgn(\Delta W_1(\tau_1))(\sgn(W_1(\theta_1)))^{a+b-1} \},\\
% \lambda_1 \quad&=\quad\inf\{t \ge \eta_1: \text{r.c.}, \Delta W_1(t) = 0\},\\
% \beta_1 \quad&=\quad \inf\{t \ge \lambda_1: \text{s.c.}, W_1(t)=R^{-1}W_2(t)\}.
% \end{align*}
% Note that in the definition of $\tau_1$ above, $\tau_1= \frac{1}{R}$ only when $a=1, b=0$, or equivalently, when $a+b=1$.

%----------------------------
\medskip\noindent
\textbf{Phase 1: }
In this phase, synchronous coupling of $W_1$ and $\tW_1$ is applied on $[0, \theta_1]$, where $\theta_1$ is the stopping time defined above.
The Brownian motions agree at time \(0\) and 
are synchronously coupled on $[0, \theta_1]$, so agree over the whole interval $[0, \theta_1]$.
Therefore $\Delta I_{(a,b)}(\theta_1)=\Delta I_{(a,b)}(0)=1$.

We first estimate the tail probability of $\theta_1$ as follows. If \(t\geq1\) then
% We claim that the following holds:
\begin{align}\label{eq:thetatail}
\Prob{\theta_1> R^{4n+2} t} \quad& \le\quad \frac{C\log t}{\sqrt{t}}\,.
\end{align}
To see this, note that \(\theta_1\) is obtained by starting a planar Brownian motion located at distance \(\sqrt{1+R^2}w\) along
the line \(W_2=RW_1\) from the origin,
and running it till it hits the diagonal \(W_2=R W_1\) at a distance at least \(R^{2n}\) from the origin.
By Brownian scaling and rotational invariance of planar Brownian motion, 
$\theta_1$ is stochastically dominated by $R^{4n+2} \theta'_1$, where
$$
\theta'_1\quad=\quad \inf\{t>0: W^*_1(t)=0, |W^*_2(t)| \ge \sqrt{2}\}\,.
$$
Here $(W^*_1, W^*_2)$ is
a planar Brownian motion $(W^*_1, W^*_2)$ with $W^*_1(0)=0, W^*_2(0)=\frac{\sqrt{1 + R^2}w}{R^{2n+1}}$;
if  \(\sqrt{2}\) were replaced by \(\sqrt{1+R^{-2}}\) then the stochastic domination would become an equality
(recall, \(R>1\)).
% $$
% \theta'_1=\inf\{t>0: W^*_1(t)=0, |W^*_2(t)| \ge \sqrt{2}\}.
% $$
% By Brownian scaling and rotational invariance of planar Brownian motion, $\theta_1$ is stochastically dominated by $R^{4n+2} \theta'_1$.
%where $(W^*_1, W^*_2)$ is a planar Brownian motion starting from $(0, \sqrt{1 + R^2}w/R^{2n+1})$.
If $L(t)$ denotes the local time of $W^*_1$ at $0$ at time $t$ and $\zeta(t)$ denotes the inverse local time, 
then $W^*_2(\zeta(t))=\mathcal{C}(t)$, where $\mathcal{C}$ is a Cauchy process starting at $\sqrt{1 + R^2}w/R^{2n+1}$.
%As the random time $\theta'_1$ belongs to the set of increase of the monotone function $L$ 
% (namely, the region where $W^*_1=0$), we have $W^*_2(\theta'_1)=\mathcal{C}(L(\theta'_1))$ and $|\mathcal{C}(L(t))| = |W^*_2(L(\zeta(t))| \le \sqrt{2}$ for all $t \le \theta'_1$.
If $L(\theta'_1)>s$, then the continuity of $L$ implies $\theta'_1>\zeta(s)$.
The range of $\zeta$ is a subset of the set of times where the monotone function $L$ increases
(namely, the times where $W^*_1=0$), so $\theta'_1>\zeta(s)$ yields $\sqrt{2} > W^*_2(\zeta(s))$ from the definition of $\theta'_1$.
Hence, for $t \ge 1$, and $u=C_2^{-1}\log t$ for a certain positive constant \(C_2\),
% \xNB[WSK1]{Here is where we need $t \ge 1$ (see NB above)}
\begin{multline}\label{eq:the}
\Prob{\theta'_1>t} \quad\le\quad
\Prob{L(t)\le u} + \Prob{\theta'_1>t, L(t) > u}\\
\quad\le\quad \Prob{L(t)\le u} + \Prob{L(\theta'_1) > u}
\quad\le\quad \Prob{L(t)\le u} + \Prob{\sup_{s \le u} |\mathcal{C}(s)| \le \sqrt{2}}\,.
\end{multline}
%where $C_1, C_2$ do not depend on $w$.
By the L\'evy transform, the local time process $(L(s): s \ge 0)$ has the distribution of the running supremum of Brownian motion, so
$$
\Prob{L(t)\le u} \quad\le\quad \frac{2}{\pi} \frac{u}{\sqrt{t}}\,.
$$
To bound the second probability in \eqref{eq:the}, recall that the Cauchy process $\mathcal{C}$ is a pure jump L\'evy process.
Consequently, the increments $(\mathcal{C}(j)-\mathcal{C}(j-1) : 1 \le j \le \lfloor u \rfloor)$ are {i.i.d.} 
with a common Cauchy distribution.
If $\sup_{s \le u} |\mathcal{C}(s)| \le \sqrt{2}$ holds then 
$|\mathcal{C}(j)-\mathcal{C}(j-1)| \le 2\sqrt{2}$ for $1 \le j \le \lfloor u \rfloor$, 
and therefore  (using positive constants $C_1, C_2$ not depending on $w$)
% \xNB[WSK1]{Could be explicit about constants here just by computing with a particular Cauchy distribution!}
$$
\Prob{\sup_{s \le u} |\mathcal{C}(s)| \le \sqrt{2}} \quad\le\quad C_1 e^{-C_2 u}\,.
$$
Applying these bounds to \eqref{eq:the},
$$
\Prob{\theta'_1>t} \quad\le\quad \frac{\sqrt{2}u}{\sqrt{\pi t}} + C_1 e^{-C_2 u}\,.
$$
The required bound \eqref{eq:thetatail} follows by taking $u=C^{-1}_2\log t$
and choosing a suitable
% \xNB[WSK1]{Derivation of the Phase 1 inequality could be clearer if we calculated \(C\).}
\(C\) bearing in mind that \(t\geq1\).
%Further, as the Brownian motions are synchronously coupled on $[0, \theta_1]$, $\Delta I_{(a,b)}(\theta_1)=\Delta I_{(a,b)}(0)=1$.

%----------------------------
\medskip\noindent
\textbf{Phase 2: }
This phase employs reflection coupling between $W_1$ and $\tW_1$,
and runs from time $\theta_1$ till the stopping time
$$
\tau_1 \quad=\quad
\inf\left\{ t \ge \theta_1: |\Delta W_1(t)| = \frac{1}{|W_1(\theta_1)|^{a+b-1}R^b}\right\}\,.
$$
Phase 1 leaves $|W_1(\theta_1)| \ge R^{2n}$. Using $f(a-1, b) = 2n(a-1) + (2n+1)b$,
$$
\sup_{t \in [\theta_1, \tau_1]}|\Delta W_1(t)| 
\quad=\quad  \frac{1}{|W_1(\theta_1)|^{a+b-1}R^b} \le \frac{1}{R^{2n(a+b-1)}R^b}=\frac{1}{R^{f(a-1,b)}}\,.
$$
% where we recall that $f(a-1, b) = 2n(a-1) + (2n+1)b$.
Thus, for $t \ge 1$, applying successively reflection coupling and Brownian scaling,
\begin{align}\label{eq:tautail}
\Prob{\tau_1-\theta_1>t/R^{2f(a-1,b)}} 
\quad&\le\quad \Prob{\sup_{0\leq s-\theta_1\leq t/R^{2f(a-1,b)}}|\Delta W_1(s)| \;\le\; \frac{1}{R^{f(a-1,b)}}}\nonumber\\
\quad&=\quad \Prob{\sup_{0\leq s-\theta_1\leq t/R^{2f(a-1,b)}}|W_1(s)-W_1(\theta_1)| \;\le\; \frac{1}{2R^{f(a-1,b)}}}\nonumber\\
\quad&=\quad \Prob{\sup_{s \in [0,t]}|W_1(s)| \;\le\; \frac{1}{2}}\quad\le\quad C_1 e^{-C_2t}\,.
\end{align}
% where the second step is a consequence of reflection coupling and the third step follows from Brownian scaling.

Consider the telescoping sum (for $s \ge 0$),
\begin{align}\label{eq:expan1}
({W_1(s)^a} & - {\tW_1(s)^a}){W_2(s)^b}\nonumber\\
\quad&=\quad \Delta W_1(s)({W_1(s)^{a-1}}+ {W_1(s)^{a-2}}\tW_1(s) + \dots + {\tW_1(s)^{a-1}}){W_2(s)^b}.
\end{align}
Since $W_2(\theta_1)=R W_1(\theta_1)$, we know 
$$\sup_{s \in [\theta_1, \tau_1]}|\Delta W_1(s)| =  \frac{1}{|W_1(\theta_1)|^{a+b-1}R^b} =  \frac{1}{|W_1(\theta_1)|^{a-1}|W_2(\theta_1)|^{b}}.$$
So, for $1 \le k \le a$, 
% and $x\ge 2^{a+b-1}$,
\begin{multline}\label{eq:bublu}
\Prob{\sup_{\theta_1 \le s \le \tau_1}|\Delta W_1(s)||W_1(s)|^{a-k}|\tW_1(s)|^{k-1}|W_2(s)|^b > x} 
\quad\le\quad
\\
\Prob{\sup_{\theta_1 \le s \le \tau_1}
\left|\frac{W_1(s)}{W_1(\theta_1)}\right|^{a-k}
    \left|\frac{\tW_2(s)}{\tW_1(\theta_1)}\right|^{k-1}
    \left|\frac{W_2(s)}{ W_2(\theta_1)}\right|^b > x}\,.
\end{multline}
As $W_1$ and $\tW_1$ are reflection coupled in $[\theta_1, \tau_1]$, therefore
\begin{equation}\label{eq:woneref}
\sup_{\theta_1 \le s \le \tau_1}|W_1(s)-W_1(\theta_1)| 
\;=\; \sup_{\theta_1 \le s \le \tau_1}|\tW_1(s)-\tW_1(\theta_1)| 
\;=\; \frac{1}{2|W_1(\theta_1)|^{a+b-1}R^b} 
\;\le\quad \frac{1}{2R^{f(a-1,b)}}\,.
\end{equation}
Writing \(W_1(s)=(W_1(s)-W_1(\theta_1))+W_1(\theta_1)\) 
and using $R>1$ and $|W_1(\theta_1)| \ge R^{2n}$ as well as \eqref{eq:woneref},
\begin{align}\label{eq:middle}
\sup_{\theta_1 \le s \le \tau_1}\left|\frac{W_1(s)}{W_1(\theta_1)}\right| \quad\le\quad 2\,, 
\qquad 
\sup_{\theta_1 \le s \le \tau_1}\left|\frac{\tW_1(s)}{\tW_1(\theta_1)}\right| \quad\le\quad 2\,.
\end{align}
If $b=0$ then the right-hand side of \eqref{eq:bublu} simplifies, and for $x>2^{a-1}$ it is immediate that
\begin{multline}\label{eq:b=0case}
\Prob{\sup_{\theta_1 \le s \le \tau_1}|\Delta W_1(s)||W_1(s)|^{a-k}|\tW_1(s)|^{k-1}|W_2(s)|^b > x} 
% \Prob{\sup_{\theta_1 \le s \le \tau_1}\left|
%   \frac{W_1(s)}{W_1(\theta_1)}\right|^{a-k}\left|\frac{\tW_1(s)}{\tW_1(\theta_1)}
%   \right|^{k-1}
%   \left|\frac{W_2(s)}{W_2(\theta_1)}\right|^b > x}
\quad\leq\quad
\\
\Prob{\sup_{\theta_1 \le s \le \tau_1}\left|\frac{W_1(s)}{W_1(\theta_1)}\right|^{a-k}\left|
\frac{\tW_1(s)}{\tW_1(\theta_1)}\right|^{k-1}> x}
\quad=\quad0\,.
\end{multline}
If $b \ge 1$, we can use \eqref{eq:bublu} with \eqref{eq:middle} to obtain 
% for $x \ge 2^{a+b-1}$,
\begin{multline*}
% \label{eq:bound1bef} % Will no longer refer to this inequality sequence!
\Prob{\sup_{\theta_1 \le s \le \tau_1}|\Delta W_1(s)||W_1(s)|^{a-k}|\tW_1(s)|^{k-1}|W_2(s)|^b > x}\\
% &\Prob{\sup_{\theta_1 \le s \le \tau_1}\left|\frac{W_1(s)}{W_1(\theta_1)}\right|^{a-k}
%     \left|\frac{\tW_1(s)}{\tW_1(\theta_1)}\right|^{k-1}
%     \left|\frac{W_2(s)}{W_2(\theta_1)}\right|^b > x}
%     \nonumber\\
%  \quad 
 \quad \le \quad \Prob{2^{a-1} \sup_{\theta_1 \le s \le \tau_1}
    \left|\frac{W_2(s)}{W_2(\theta_1)}\right|^b > x}
\quad=\quad \Prob{\sup_{\theta_1 \le s \le \tau_1}|W_2(s)| > (x^{1/b}/2^{(a-1)/b})|W_2(\theta_1)|}\\
\quad \quad \le \quad \Prob{\sup_{\theta_1 \le s \le \tau_1}
      |W_2(s)-W_2(\theta_1)| > \left((x^{1/b}/2^{(a-1)/b})-1\right)|W_2(\theta_1)|}
      \,.
\end{multline*}
Now introduce the requirement that $x \ge 2^{a+b-1}$,
so that 
$(x^{1/b}/2^{(a-1)/b})-1
\ge (x/2^{a+b-1})^{1/b}$ for $x \ge 2^{a+b-1}$.
Applying this together with $|W_2(\theta_1)| \ge R^{2n+1}$,
and then applying a Markov inequality argument,
followed by an application of the BDG inequality \citep[p.~163]{KaratzasShreve-2012} 
after conditioning on $\sigma\{(W_1(s), W_2(s)): s \le \theta_1\}$,
\begin{align}\label{eq:bound1bef}
& \Prob{\sup_{\theta_1 \le s \le \tau_1}|\Delta W_1(s)||W_1(s)|^{a-k}|\tW_1(s)|^{k-1}|W_2(s)|^b > x} 
\nonumber\\
& \quad \quad \le \quad \Prob{\sup_{\theta_1 \le s \le \tau_1}|W_2(s)-W_2(\theta_1)| > (x/2^{a+b-1})^{1/b}R^{2n+1}}\nonumber\\
& \quad \quad \le \quad \frac{2^{2(a+b-1)/b}\Expect{\sup_{\theta_1 \le s \le \tau_1}
      |W_2(s)-W_2(\theta_1)|}^2}{x^{2/b}R^{4n+2}}\nonumber\\
&\quad \quad \le \quad  \frac{2^{2(a+b-1)/b}\Expect{\tau_1-\theta_1}}{x^{2/b}R^{4n+2}}\; C
%\Prob{\tau_1-\theta_1 >T} + \Prob{\sup_{\theta_1 \le t \le \theta_1 + T}|W_2(s)-W_2(\theta_1)| > (x/2^{a+b-1})^{1/b}R^{2n+1}}
%& \quad \quad \le \quad \frac{C}{R^{4n+2} T} + \frac{CT}{x^{2/b}R^{4n+2}} \le \frac{C}{R^{4n+2}x^{1/b}}.
\qquad\text{ when }x \ge 2^{a+b-1}\,.
\end{align}
From \eqref{eq:tautail}, since  $f(a-1, b) \ge 2n + 1$ for $a,b \ge 1$,
\begin{equation}\label{tautheta}
\Expect{\tau_1-\theta_1} \quad\le\quad \frac{C}{R^{2f(a-1,b)}} \quad\le\quad \frac{C}{R^{4n+2}}\,.
\end{equation}
%Thus, by the Markov inequality,
%$$
%\Prob{\tau_1-\theta_1 >T} \le \frac{\Expect{\tau_1-\theta_1}}{T} \le \frac{C}{R^{4n+2}T}.
%$$
%By an application of the Tchebychev and Doob's maximal inequalities, we obtain
%$$
%\Prob{\sup_{\theta_1 \le t \le \theta_1 + T}|W_2(s)-W_2(\theta_1)| > (x/2^{a+b-1})^{1/b}R^{2n+1}} \le \frac{CT}{x^{2/b}R^{4n+2}}.
%$$
Using this estimate in \eqref{eq:bound1bef}, and using a new constant \(C\),
% \begin{align}\label{eq:bound1}
% \Prob{\sup_{\theta_1 \le s \le \tau_1}\left|\frac{W_1(s)}{W_1(\theta_1)}\right|^{a-k}
%       \left|\frac{\tW_1(s)}{\tW_1(\theta_1)}\right|^{k-1}
%       \left|\frac{W_2(s)}{W_2(\theta_1)}\right|^b > x} 
%   \quad\le\quad \frac{C}{R^{8n+4}x^{2/b}}\,.
% \end{align}
%As the above holds for arbitrary $T>0$, we take $T=x^{1/b}$ to obtain
%\begin{align}\label{eq:bound1}
%\Prob{\sup_{\theta_1 \le t \le \tau_1}\left|\frac{W_1(t)}{W_1(\theta_1)}\right|^{a-k}\left|\frac{\tW_1(t)}{\tW_1(\theta_1)}\right|^{k-1}\left|\frac{W_2(t)}{W_2(\theta_1)}\right|^b > x} \le \frac{C}{R^{4n+2}x^{1/b}}.
%\end{align}
% Using the above bound in \eqref{eq:bublu}, 
we obtain the following when \(b\geq1\), when $x \ge 2^{a+b-1}$,
\begin{align}\label{eq:tauflucold}
\Prob{\sup_{\theta_1 \le s \le \tau_1}|\Delta W_1(s)||W_1(s)|^{a-k}|\tW_1(t)|^{k-1}|W_2(s)|^b > x} 
\quad\le\quad 
\frac{2^{2(a+b-1)/b}}{R^{8n+4}x^{2/b}} \; C\,.
\end{align}
Note that \eqref{eq:b=0case} yields an upper bound of \(0\)
% where the right hand side above is taken to be zero 
when $b=0$ (and $x>2^{a-1}$).
Using \eqref{eq:tauflucold} in \eqref{eq:expan1}, for whatever \(b\), 
and writing \(x=2^{a+b-1} M\) for future convenience of exposition, 
% if $x >2^{a+b-1}$,
if \(M>1\) then
\begin{align}\label{eq:taufluc}
\Prob{\sup_{\theta_1 \le s \le \tau_1}\left|\frac{({W_1(s)^a} - {\tW_1(s)^a}){W_2(s)^b}}{a 2^{a+b-1}}\right| > M} 
\quad\le\quad 
\begin{cases}
\frac{a}{R^{8n+4}M^{2/b}} \; C & \text{ if } b\geq1\,, 
\\
0                                              & \text{ if }b=0\,.
\end{cases}
\end{align}
We now rewrite \eqref{eq:taufluc} and \eqref{eq:tautail}
to match the first assertion in part (i) of Lemma \ref{lem:tail1}
(after conditioning on $\sigma\{(W_1(s), W_2(s)): s \le \theta_1\}$).
For $s > \theta_1$, we set 
$t = s-\theta_1$,
$B(t)=W_2(t+\theta_1)-W_2(\theta_1)$, 
$Y_t=\frac{({W_1(s)^a} - {\tW_1(s)^a}){W_2(s)^b}}{a2^{a+b-1}}$,
$\tau= \tau_1-\theta_1$, 
$\eps = R^{-1}$.
To match the indices in part (i) of Lemma \ref{lem:tail1}, set
$\alpha = 2/b, \beta =2$ if $b\ge1$, 
and choose any $\beta>0$ if \(b=0\).
Then \eqref{eq:taufluc} is equivalent to the following, holding when \(M>1\):
\[
\Prob{\sup_{0 \le t \le \tau}\left|Y_t\right| > M} 
\quad\le\quad 
\begin{cases}
\frac{a \eps^{8n+4}}{M^{\alpha}} \; C & \text{ if } b\geq1\,, 
\\
0                                              & \text{ if }b=0\,.
\end{cases} 
\]
Note that \(\eps<1\) (since \(R>1\)), so the above implies the weaker inequality,
if \(M>1\) then
\[
 \Prob{\sup_{0 \le t \le \tau}\left|Y_t\right| > M} 
\quad\le\quad 
a C \; {M^{-\alpha}} \,. 
\]

On the other hand \eqref{eq:tautail} becomes
\[
 \Prob{\tau>t \eps^{4n(a+b-1)+2b}} 
 \quad\le\quad C_1 e^{-C_2  t }\,.
\]
Noting \(e^{-C_2  t }\leq 1/(C_2 t)^2\) for \(t>0\), 
and then re-scaling time and using $n \ge 1, a+b > 1$, we obtain
\[
 \Prob{\tau>t} 
 \quad\le\quad
 C_1 e^{-C_2  t / \eps^{4n(a+b-1)+2b} }
 \quad\leq\quad
 (C_1/C_2^2) \eps^{8n(a+b-1)+4b} t^{-2}
 \quad\le\quad
 (C_1/C_2^2)  \eps^2/t^2\,.
\]

We can now apply the first assertion in part (i) of Lemma \ref{lem:tail1} to deduce the following.
% \xNB[WSK1]{\textcolor{blue}{\textbf{EXPOSITION:}} Checking the application of the first assertion in part (i) of Lemma \ref{lem:tail1} is intricate: can we make the check easier to perform?}
% \xNB[WSK]{May be clearer to treat $b=0$ and $b\geq1$ differently.}
For \(z>\eps^{1/4}\),
\[
 \Prob{\left|\int_0^{\tau} Y_s {\d}B_s\right|\geq z}
\quad\leq\quad
\frac{C' \; \eps^{1/(4(b \vee 2))}}{  z^{1/(b \vee 2)} }\,.
 \]
Writing \(Y_s\) in full, this amounts to the following:
% \xNB[WSK1]{Re-check the manipulations leading to \eqref{eq:first-hard-bit} \textbf{SB1}: corrected some errors. Check.}
when \(x> a 2^{a+b-1}R^{-1/4}\), and taking \(\gamma'=1/(b\vee 2)\),
\begin{align}\label{eq:first-hard-bit}
\Prob{\left|\int_{\theta_1}^{\tau_1} ({W_1(s)^a} - {\tW_1(s)^a}){W_2(s)^b}{\d}W_2(s)\right| >x} 
\quad\le\quad \frac{C''}{(R^{1/4}x)^{\gamma'}}\,.
\end{align}

A similar procedure
% \xNB[WSK1]{SB please to check the treatment of the time integral bound in Section \ref{sec:coup-general}!}
leads to a bound concerning 
\(\int_{\theta_1}^{\tau_1} ({W_1(s)^a} - {\tW_1(s)^a}){W_2(s)^{b-1}}{\d}s\).
Here we need only argue for the case \(b\geq1\), as the time integral does not appear for \(I_{(a,0)}\).
% \xNB[WSK1]{Time integral vanishes for $b=0$!}
Referring to \eqref{eq:taufluc}, but using \(b-1\) instead of \(b\), if \(M>1\) then we obtain
\begin{align}\label{eq:taufluc:b-1}
\Prob{\sup_{\theta_1 \le s \le \tau_1}\left|\frac{({W_1(s)^a} - {\tW_1(s)^a}){W_2(s)^{b-1}}}{a 2^{a+b-2}}\right| > M} 
\quad\le\quad 
\begin{cases}
\frac{a}{R^{8n+4}M^{2/(b-1)}} \; C & \text{ if } b\geq2\,, 
\\
0                                              & \text{ if }b=1\,.
\end{cases}
\end{align}
Choosing $B, \eps$ and $\tau$ as before, 
and again conditioning on $\sigma\{(W_1(s), W_2(s)): s \le \theta_1\}$,
but now setting $Y_t=\frac{({W_1(t)^a} - {\tW_1(t)^a}){W_2(t)^{b-1}}}{a2^{a+b-2}}$.
To match the indices in part (i) of Lemma \ref{lem:tail1}, set
% and 
$\alpha = 2/(b-1)$ (for \(b>1\)), and take any $\beta >\alpha$. 
When \(M>1\),
\[
\Prob{\sup_{0 \le t \le \tau}\left|Y_t\right| > M} 
\quad\le\quad 
\begin{cases}
\frac{a \eps^{8n+4}}{M^{\alpha}} \; C & \text{ if } b\geq2\,, 
\\
0                                              & \text{ if }b=1\,.
\end{cases} 
\]
Applying the second assertion in part (i) of Lemma \ref{lem:tail1}, and using
\(\gamma''=\tfrac12(1/(1\vee(b-1)))\), 
\begin{align}\label{eq:second-hard-bit}
\Prob{\left|\int_{\theta_1}^{\tau_1} ({W_1(s)^a} - {\tW_1(s)^a}){W_2(s)^{b-1}}{\d}s\right| >x} 
\quad\le\quad
\frac{C}{(R^{1/2}x)^{\gamma''}} \quad \text{ for } x > a 2^{a+b-2}R^{-1/2}.
\end{align}

Applying the inequalities \eqref{eq:first-hard-bit} and \eqref{eq:second-hard-bit}
to the It\^o representation of $I_{(a,b)}$ given in \eqref{eq:itostrat}, 
we conclude that for any $a \ge 1$, $b \ge 0$, $a+b>1$, %there exist $\gamma>0$ and $\delta > 0$ such that
if $x > a 2^{a+b-1}R^{-1/4}$ then
\begin{align}\label{eq:tauest}
\Prob{|\Delta I_{(a,b)}(\tau_1)-\Delta I_{(a,b)}(\theta_1)|>x} 
\quad\le\quad \frac{C}{(R^{1/4}x)^{\gamma' \wedge \gamma''}}\,.
\end{align}
% for $x > a 2^{a+b-1}R^{-1/4}$.

% \textcolor{blue}{*** MISPLACED:\\
%As $R>1$, $\displaystyle{\Prob{\sup_{t \in [\theta_1, \tau_1]}|\Delta W_1(t)| \ge M}=0}$ for $M>1$.
% Furthermore,%
% We also note for reference, using%
% \xNB[WSK1]{Not sure this line should be here: perhaps it was accidentally un-commented out!}
% from \eqref{eq:tautail}, $\displaystyle{\Prob{\tau_1-\theta_1 \ge t} \le C_1 e^{-C_2 Rt}}$ for $t \ge R^{-1}$.
% }

% \textcolor{blue}{*** MISPLACED:\\
% Thus, applying the first assertion in part (i) of Lemma \ref{lem:tail1} with $\eps = R^{-1}$, $Y_t= \Delta W_1(t)$, $\tau= \tau_1-\theta_1$ and any $\alpha, \beta >0$,
% }

% \medskip\noindent
% \textbf{Note: }In describing the subsequent phases, to refrain from overburdening notation, we will not be so explicit about the exponents appearing in the power law bounds of the probabilities.
% We will also not be explicit about our choice of $Y_t, \tau, \alpha, \beta, B$ when we refer to Lemma \ref{lem:tail1}.
% These choices will be clear from the context.\\

%----------------------------
\medskip\noindent
\textbf{Phase 3: }Now, we address the time interval $[\tau_1, \eta_1]$.
In this phase, 
starting at time $\tau_1$,
synchronous coupling is employed to the driving Brownian motions till $W_2(t+\tau_1)-W_2(\tau_1)$ 
hits the level $-a^{-1}\sgn(\Delta W_1(\tau_1))(\sgn(W_1(\theta_1)))^{a+b-1}$.
Applying the reflection principle to $(W_1(t+\tau_1) - W_1(\tau_1): t \ge 0)$, 
we can deduce the following estimate related to hitting times of Brownian motion:
\begin{align}\label{eq:etatail}
\Prob{\eta_1-\tau_1 >t} \quad\le\quad Ct^{-1/2}\,.
\end{align}
Consider the fluctuations of $\Delta I_{(a,b)}$ on this interval.
Using \eqref{eq:expan1}, it suffices to address the integrals
$$
\int_{\tau_1}^{\eta_1}\Delta W_1(s) W_1(s)^{a-k}\tW_1(s)^{k-1}W_2(s)^b {\d}W_2(s)
$$
and (for \(b\geq1\) and $1 \le k \le a$)
$$
\int_{\tau_1}^{\eta_1}\Delta W_1(s) W_1(s)^{a-k}\tW_1(s)^{k-1}W_2(s)^{b-1} {\d}s\,.
$$
% for $1 \le k \le a$.
As this is a synchronous coupling phase, 
$|\Delta W_1(t)|=|\Delta W_1(\tau_1)|=\frac{1}{|W_1(\theta_1)|^{a+b-1}R^b}$ for all $t \in [\tau_1, \eta_1]$.
Observe that
\begin{multline*}
\sgn{\Delta W_1(\tau_1)}(\sgn{W_1(\theta_1)})^{a+b-1}\Delta W_1(t)\quad=\quad (\sgn{W_1(\theta_1)})^{a+b-1}|\Delta W_1(t)|\\
\quad=\quad (\sgn{W_1(\theta_1)})^{a+b-1}\frac{1}{|W_1(\theta_1)|^{a+b-1}R^b}
\quad=\quad\frac{1}{W_1(\theta_1)^{a+b-1}R^b} \,.
\end{multline*}
Combining this with the facts that $W_1(\theta_1)=\tW_1(\theta_1)$ and $W_2(\theta_1)=RW_1(\theta_1)$,
if $t \in [\tau_1, \eta_1]$ then
\begin{multline}\label{eq:etafluc1}
\sgn{\Delta W_1(\tau_1)}(\sgn{W_1(\theta_1)})^{a+b-1} \times \Delta W_1(t)W_1(t)^{a-k}\tW_1(t)^{k-1}{W_2(t)^b}\\
\quad = \quad
\left(\frac{W_1(t)}{W_1(\theta_1)}\right)^{a-k}\left(\frac{\tW_1(t)}{\tW_1(\theta_1)}\right)^{k-1}\left(\frac{W_2(t)}{W_2(\theta_1)}\right)^b.
\end{multline}
Set $A_1(t)=\frac{W_1(t)}{W_1(\theta_1)}$, $\tA_1(t)=\frac{\tW_1(t)}{\tW_1(\theta_1)}$ and $A_2(t)=\frac{W_2(t)}{W_2(\theta_1)}$.
Observe that for $1 \le k \le a$
\begin{multline}\label{eq:abound1}
|A_1(t)^{a-k}\tA_1(t)^{k-1}A_2(t)^b-1| \quad\le\quad
\\
|A_1(t)^{a-k}-1||\tA_1(t)^{k-1}||A_2(t)^b| + |\tA_1(t)^{k-1}-1||A_2(t)^b| + |A_2(t)^b-1|\,.
\end{multline}
We will show that the first term above is small with high probability.
If $a=1$, or more generally if $k=a$, then the first term is identically zero.
If $a \ge 2$ and $k \le a-1$ then
% We can write
\begin{align}\label{eq:a1bound}
|A_1(t)^{a-k}-1||\tA_1(t)^{k-1}||A_2(t)^b| 
\quad\le\quad
\sum_{j=1}^{a-k}|A_1(t)-1||A_1(t)|^{a-k-j}|\tA_1(t)|^{k-1}|A_2(t)|^b\,.
\end{align}
Fix $x \ge 1/R^{2n}$. Recall that \(|W_1(\theta_1)|\geq R^{2n}\),
and note firstly that for $x \ge 1/R^{2n}$, by the reflection coupling implications summarized in \eqref{eq:woneref},
$$
\Prob{|W_1(\tau_1) - W_1(\theta_1)| > xR^{2n}/2} \quad \le\quad \Prob{|W_1(\tau_1) - W_1(\theta_1)| > 1/2}=0\,.
$$
and secondly
by a Tchebychev inequality argument and Doob's $L^2$-maximal inequality
$$
\Prob{\sup_{\tau_1 \le t \le \tau_1 + T}|W_1(t)-W_1(\tau_1)| \quad>\quad x R^{2n}/2} \le \frac{CT}{x^2R^{4n}}\,.
$$
Then
\begin{multline}\label{eq:later}
\Prob{\sup_{\tau_1 \le t \le \eta_1}|A_1(t)-1| > x} 
\quad\le\quad \Prob{\sup_{\tau_1 \le t \le \eta_1}|W_1(t)-W_1(\theta_1)| > x R^{2n}}
\\
\quad\le\quad \Prob{\eta_1 - \tau_1 >T} + \Prob{\sup_{\tau_1 \le t \le \tau_1 + T}|W_1(t)-W_1(\theta_1)| > x R^{2n}}
\\
\quad\le\quad \Prob{\eta_1 - \tau_1 >T} + \Prob{\sup_{\tau_1 \le t \le \tau_1 + T}|W_1(t)-W_1(\tau_1)| > x R^{2n}/2}
\\
\quad \quad \quad + \Prob{|W_1(\tau_1) - W_1(\theta_1)| > xR^{2n}/2}
\quad\le\quad \frac{C}{\sqrt{T}} + \frac{CT}{x^2R^{4n}} 
\quad\le\quad \frac{C}{(xR^{2n})^{2/3}}\,,
\end{multline}
where the last inequality follows by taking $T=(xR^{2n})^{4/3}$.
% where the last two inequalities follow respectively by using a Markov inequality argument and 
% by taking $T=(xR^{2n})^{4/3}$.
% Here, in the fourth inequality we used the fact that for $x \ge 1/R^{2n}$, by the reflection coupling implications summarized in \eqref{eq:woneref},
% $$
% \Prob{|W_1(\tau_1) - W_1(\theta_1)| > xR^{2n}/2} \le \Prob{|W_1(\tau_1) - W_1(\theta_1)| > 1/2}=0.
% $$
% We have also used the Tchebychev inequality and Doob's $L^2$-maximal inequality to conclude that
% $$
% \Prob{\sup_{\tau_1 \le t \le \tau_1 + T}|W_1(t)-W_1(\tau_1)| > x R^{2n}/2} \le \frac{CT}{x^2R^{4n}}.
% $$

Similarly, for $x \ge 2$,
\begin{multline}\label{eq:bound2}
\Prob{\sup_{\tau_1 \le t \le \eta_1}|A_1(t)| > x} \quad=\quad 
\Prob{\sup_{\tau_1 \le t \le \eta_1}|W_1(t)| > x|W_1(\theta_1)|}
\\
\quad\le\quad \Prob{\sup_{\tau_1 \le t \le \eta_1}|W_1(t)-W_1(\theta_1)| > x|W_1(\theta_1)|/2}
\\
\quad\le\quad \Prob{\sup_{\tau_1 \le t \le \eta_1}|W_1(t)-W_1(\theta_1)| > xR^{2n}/2} \le \frac{C}{(xR^{2n})^{2/3}}\,,
\end{multline}
where the last inequality follows from the computations performed to obtain \eqref{eq:later}.
A similar estimate for $\Prob{\sup_{\tau_1 \le t \le \eta_1}|\tA_1(t)| > x}$ 
holds by replacing $W_1$ with $\tW_1$ in the above calculations. To derive an analogous estimate for $\Prob{\sup_{\tau_1 \le t \le \eta_1}|A_2(t)| > x}$, first observe that
$$
\Expect{|W_2(\tau_1)-W_2(\theta_1)|}^2 = \Expect{\tau_1 - \theta_1} \le C/R^{4n+2},
$$
where the first equality is because conditional on $\sigma\{(W_1(s), W_2(s)): s \le \theta_1\}$, $W_2 - W_2(\theta_1)$ is independent of $\tau_1 - \theta_1$ and the last inequality follows from \eqref{tautheta}. Using this observation along with the Tchebychev inequality, we obtain
\begin{multline}\label{newbd1}
\Prob{\sup_{\tau_1 \le t \le \eta_1}|W_2(t)-W_2(\theta_1)| > \frac{xR^{2n+1}}{2}}\\
\quad\le\quad \Prob{\eta_1 - \tau_1 >T} + \Prob{\sup_{\tau_1 \le t \le \tau_1 + T}|W_2(t)-W_2(\theta_1)| > \frac{xR^{2n+1}}{2}}
\\
\quad\le\quad \Prob{\eta_1 - \tau_1 >T} + \Prob{\sup_{\tau_1 \le t \le \tau_1 + T}|W_2(t)-W_2(\tau_1)| > \frac{xR^{2n+1}}{4}}\\
 + \Prob{|W_2(\tau_1) - W_2(\theta_1)| > \frac{xR^{2n+1}}{4}}\\
\quad\le\quad \frac{C}{\sqrt{T}} + \frac{CT}{x^2R^{4n+2}}  + \frac{C}{x^2R^{8n+4}}
\quad\le\quad \frac{C}{(xR^{2n})^{2/3}}\,,
\end{multline}
where the last inequality follows by taking $T=(xR^{2n})^{4/3}$. Using \eqref{newbd1} and recalling $|W_2(\theta_1)| \ge R^{2n+1}$,
\begin{multline}\label{eq:boundnew}
\Prob{\sup_{\tau_1 \le t \le \eta_1}|A_2(t)| > x} 
\quad\le\quad \Prob{\sup_{\tau_1 \le t \le \eta_1}|W_2(t)-W_2(\theta_1)| > x|W_2(\theta_1)|/2}
\\
\quad\le\quad \Prob{\sup_{\tau_1 \le t \le \eta_1}|W_2(t)-W_2(\theta_1)| > xR^{2n+1}/2}\le \frac{C}{(xR^{2n})^{2/3}}.
\end{multline}
From the above estimates, we can argue the following in case $x \ge 2^{2(a+b-1)}/R^{2n}$:
\begin{multline*}
\Prob{\sup_{\tau_1 \le t \le \eta_1}|A_1(t)-1||A_1(t)|^{a-k-j}|\tA_1(t)|^{k-1}|A_2(t)|^b >x}\\
\quad \le \quad\Prob{\sup_{\tau_1 \le t \le \eta_1}|A_1(t)-1| > \frac{\sqrt{x}}{\sqrt{R^{2n}}}} + \Prob{\sup_{\tau_1 \le t \le \eta_1}|A_1(t)| > (\sqrt{R^{2n}x})^{\frac{1}{a+b-1-j}}}\\
+\Prob{\sup_{\tau_1 \le t \le \eta_1}|\tA_1(t)| > (\sqrt{R^{2n}x})^{\frac{1}{a+b-1-j}}} + \Prob{\sup_{\tau_1 \le t \le \eta_1}|A_2(t)| > (\sqrt{R^{2n}x})^{\frac{1}{a+b-1-j}}}\\
\quad\le\quad C\frac{1}{(R^{2n}x)^{\gamma}}
\end{multline*}
for some $\gamma>0$ (in fact 
% $\gamma=2/3$) 
$\gamma=1/3$) 
that does not depend on $R$ (the last three probabilities appearing after the first inequality above can be taken to be zero if $a+b-1-j=0$).
%For $a+b-1-j =0$, the same proof (with trivial modifications) yields the same inequality.

By applying the above argument to each term on the right hand side of \eqref{eq:a1bound}, we obtain
$$
\Prob{\sup_{\tau_1 \le t \le \eta_1}|A_1(t)^{a-k}-1||\tA_1(t)^{k-1}||A_2(t)^b|>x} \le \frac{C}{(R^{2n}x)^{\gamma}}  \quad \text{ for } x \ge (a-k) 2^{2(a+b-1)}/R^{2n}.
$$
The terms $|\tA_1(t)^{k-1}-1||A_2(t)^b|$ and $|A_2(t)^b-1|$ appearing in \eqref{eq:abound1} are
subject to estimates of the same form,
% using the same arguments 
% but 
based on $\Prob{\sup_{\tau_1 \le t \le \eta_1}|\tA_1(t)-1| > x}$ and $\Prob{\sup_{\tau_1 \le t \le \eta_1}|A_2(t)-1| > x}$ respectively
in place of $\Prob{\sup_{\tau_1 \le t \le \eta_1}|A_1(t)-1| > x}$,
but otherwise using the same arguments.
Hence \eqref{eq:abound1} and the above estimates yield the following for $x \ge (a+b-1)2^{2(a+b-1)}/R^{2n}$:
\begin{align}\label{eq:tuk}
\Prob{\sup_{\tau_1 \le t \le \eta_1}|A_1(t)^{a-k}\tA_1(t)^{k-1}A_2(t)^b-1|>x} \quad\le\quad \frac{C}{(R^{2n}x)^{\gamma}}\,.
\end{align}
% for $x \ge 2^{2(a+b-1)}/R^{2n}$.
Thus \eqref{eq:etafluc1} yields (when $x \ge (a+b-1)2^{2(a+b-1)}/R^{2n}$)
\begin{multline*}
\Prob{\sup_{\tau_1 \le t \le \eta_1}\left|\Delta W_1(t) W_1(t)^{a-k}\tW_1(t)^{k-1}{W_2(t)^b}
  -\sgn{\Delta W_1(\tau_1)}(\sgn{W_1(\theta_1)})^{a+b-1}\right|>x}\\ 
  \quad\le \quad\frac{C}{(R^{2n}x)^{\gamma}}\,.
\end{multline*}
% for $x \ge 2^{2(a+b-1)}/R^{2n}$.
The above holds for all $1 \le k \le a$; consequently \eqref{eq:expan1} implies that, for $x \ge a(a+b-1)2^{2(a+b-1)}/R^{2n}$,
\begin{multline}\label{eq:etafluc2}
\Prob{\sup_{\tau_1 \le t \le \eta_1}\left|({W_1(t)^a} - {\tW_1(t)^a}){W_2(t)^b}
-a\sgn{\Delta W_1(\tau_1)}(\sgn{W_1(\theta_1)})^{a+b-1}\right|>x}\\
\quad\le\quad \frac{C}{(R^{2n}x)^{\gamma}}\,.
\end{multline}
% for $x \ge a2^{2(a+b-1)}/R^{2n}$.

Now $\Prob{\eta_1-\tau_1>t} \le Ct^{-1/2}$;
so the first assertion in part (ii) of Lemma \ref{lem:tail1} implies there is $\gamma'>0$, not depending on $R$, such that
for $x \ge a(a+b-1)2^{2(a+b-1)}/R^{2n}$
\begin{multline*}
% \Prob{
\mathbb{P}\Big[
\Big|
\int_{\tau_1}^{\eta_1}({W_1(s)^a} - {\tW_1(s)^a}){W_2(s)^b}{\d}W_2(s)\\
% \\
% \qquad \qquad 
-a\sgn{\Delta W_1(\tau_1)}(\sgn{W_1(\theta_1)})^{a+b-1}(W_2(\eta_1)-W_2(\tau_1))
\Big|
>x
\Big]
% }
\\
\quad\le\quad \frac{C}{(R^{2n}x)^{\gamma'}}\,.
\end{multline*}
% for $x \ge a2^{2(a+b-1)}/R^{2n}$.
But it follows from the definition of $\eta_1$ that
$$
W_2(\eta_1)-W_2(\tau_1)=-a^{-1}\sgn(\Delta W_1(\tau_1))(\sgn(W_1(\theta_1)))^{a+b-1}\,.
$$
Together with the above inequality this yields, for $x \ge a(a+b-1)2^{2(a+b-1)}/R^{2n}$,
\begin{align}\label{eq:etaend1}
\Prob{
\left|
\int_{\tau_1}^{\eta_1}({W_1(s)^a} - {\tW_1(s)^a}){W_2(s)^b}{\d}W_2(s)+1
\right|>x}
\quad\le\quad \frac{C}{(R^{2n}x)^{\gamma}}\,.
\end{align}
% for $x \ge a2^{2(a+b-1)}/R^{2n}$.

To estimate the integral $\int_{\tau_1}^{\eta_1}\Delta W_1(s) W_1(s)^{a-k}\tW_1(s)^{k-1}W_2(s)^{b-1}{\d}s$ for $b \ge 1$, we can once more use the synchronous coupling of $W_1$, $\tW_1$ on $[\tau_1, \eta_1]$ to show that for any $t \in [\tau_1, \eta_1]$,
$$
|\Delta W_1(t) W_1(t)^{a-k}\tW_1(t)^{k-1}W_2(t)^{b-1}| 
\quad\le\quad
\frac{1}{R^{2n+1}} \left|\frac{W_1(t)}{W_1(\theta_1)}\right|^{a-k}
\left|\frac{\tW_1(t)}{\tW_1(\theta_1)}\right|^{k-1}
\left|\frac{W_2(t)}{W_2(\theta_1)}\right|^{b-1}
\,.
$$
For $b \ge 1$ we may use \eqref{eq:tuk} to show, for $x \ge 2^{2(a+b-1)}$,
\begin{align*}
\Prob{\sup_{\tau_1 \le t \le \eta_1}\left|\frac{W_1(t)}{W_1(\theta_1)}\right|^{a-k}
\left|\frac{\tW_1(t)}{\tW_1(\theta_1)}\right|^{k-1}
\left|\frac{W_2(t)}{W_2(\theta_1)}\right|^{b-1} >x}
\quad\le\quad
\frac{C}{(R^{2n}x)^{\gamma}}\,.
\end{align*}
Thus, for $x \ge 2^{2(a+b-1)}/R^{2n+1}$,
\begin{align*}
\Prob{\sup_{\tau_1 \le t \le \eta_1}|\Delta W_1(s) W_1(s)^{a-k}\tW_1(s)^{k-1}W_2(s)^{b-1}| >x} 
\quad\le\quad \frac{C}{(R^{4n+1}x)^{\gamma}}\,.
\end{align*}
Using the above and the fact that $\Prob{\eta_1-\tau_1>t} \le Ct^{-1/2}$ in the second assertion in part (ii) of Lemma \ref{lem:tail1}, we obtain $\gamma, \delta>0$ not depending on $R$ such that
\begin{align*}
\Prob{\left|\int_{\tau_1}^{\eta_1}\Delta W_1(s) W_1(s)^{a-k}\tW_1(s)^{k-1}W_2(s)^{b-1}{\d}s\right| > x} \le \frac{C}{(R^{\delta}x)^{\gamma}}
\end{align*}
for $x \ge (a+b-1)2^{2(a+b-1)}R^{-\delta}$.
But then we can use \eqref{eq:expan1} to deduce
\begin{align}\label{eq:etaend2}
\Prob{\left|\int_{\tau_1}^{\eta_1}({W_1(s)^a} - {\tW_1(s)^a}){W_2(s)^{b-1}}{\d}s\right|>x} 
\quad\le\quad \frac{C}{(R^{\delta}x)^{\gamma}}
\end{align}
for $x \ge a(a+b-1)2^{2(a+b-1)}R^{-\delta}$.
Recalling the expression of $I_{(a,b)}$ in terms of the It\^o integral and the time integral given in \eqref{eq:itostrat}, we obtain from \eqref{eq:etaend1} and \eqref{eq:etaend2},
\begin{align}\label{eq:etaend}
\Prob{\left|\Delta I_{(a,b)}(\eta_1)-\Delta I_{(a,b)}(\tau_1) + 1\right|>x} 
\quad\le\quad \frac{C}{(R^{\delta}x)^{\gamma}}
\end{align}
for $x \ge a(a+b-1)2^{2(a+b-1)}R^{-\delta}$.

%----------------------------
\medskip\noindent
\textbf{Phase 4: }
The next phase occurs in the time interval $[\eta_1, \lambda_1]$.
In this phase, after time $\eta_1$ the Brownian motions $W_1$ and $\tW_1$ are subjected to reflection coupling till they meet.
Applying the reflection principle,  and using the fact that \(|\Delta W(\eta_1)|=1/(|W(\theta_1)|^{a+b-1}R^b)\)
together with other consequences of the definitions of the stopping times \(\theta_1\) and \(\tau_1\), we see that when \(t>0\)
\begin{equation}\label{eq:lambdatail}
\Prob{\lambda_1-\eta_1>t/R^{2f(a-1,b)}} \quad\le\quad Ct^{-1/2}\,.
\end{equation}
Once again \eqref{eq:expan1} can be applied, so it suffices to consider the integrals
$$
\int_{\eta_1}^{\lambda_1}\Delta W_1(s) W_1(s)^{a-k}\tW_1(s)^{k-1}W_2(s)^b {\d}W_2(s)
$$
and
$$
\int_{\eta_1}^{\lambda_1}\Delta W_1(s) W_1(s)^{a-k}\tW_1(s)^{k-1}W_2(s)^{b-1} {\d}s
$$
for $1 \le k \le a$.

For $\eta_1 \le t \le \lambda_1$ we can write
\begin{multline}\label{eq:tutu}
|\Delta W_1(t)| |W_1(t)|^{a-k}|\tW_1(t)|^{k-1}|W_2(t)|^b \quad=\quad
\\
\left(|\Delta W_1(t)||W_1(\theta_1)|^{a+b-1}R^b\right)
  \left|\frac{W_1(t)}{W_1(\theta_1)}\right|^{a-k}\left|\frac{\tW_1(t)}{\tW_1(\theta_1)}\right|^{k-1}\left|\frac{W_2(t)}{W_2(\theta_1)}\right|^b
\,.
\end{multline}
Recalling that $|\Delta W_1(\eta_1)|=|\Delta W_1(\tau_1)|=\frac{1}{|W_1(\theta_1)|^{a+b-1} R^b}$,
and bearing in mind that $W_1$ and $\tW_1$ are reflection coupled on $[\eta_1, \lambda_1]$, when $x \ge 1$ it follows that
% \xNB[WSK1]{Equation \eqref{eq:l1}: there appeared to be a duplicate line for the second line of the display!}
\begin{multline}\label{eq:l1}
\Prob{\sup_{\eta_1 \le t \le \lambda_1} |\Delta W_1(t)||W_1(\theta_1)|^{a+b-1} R^b \ge x} \\
% \quad=\quad\Prob{\sup_{\eta_1 \le t \le \lambda_1} |\Delta W_1(t)||W_1(\theta_1)|^{a+b-1} R^b \ge x}
% \\
\quad=\quad\Prob{\text{Brownian motion starting from } \frac{1}{2} \text{ hits } \frac{x}{2} \text{ before zero}}
\quad=\quad\frac{1}{x}\,,
\end{multline}
where the last equality follows from the optional stopping theorem.
Fixing $x \ge 2$,
we can employ \eqref{eq:lambdatail} and a Tchebychev inequality argument to show
\begin{multline*}
\Prob{\sup_{\eta_1 \le t \le \lambda_1}\left|\frac{W_1(t)}{W_1(\theta_1)}\right|>x} 
\quad\le\quad \Prob{\sup_{\eta_1 \le t \le \lambda_1}|W_1(t)-W_1(\theta_1)|>xR^{2n}/2}
\\
\quad\le\quad \Prob{\sup_{\eta_1 \le t \le \lambda_1}|W_1(t)-W_1(\eta_1)|>xR^{2n}/4} + \Prob{|W_1(\eta_1)-W_1(\theta_1)|>xR^{2n}/4}
\\
\quad \le\quad \Prob{\lambda_1 - \eta_1> T} + \Prob{\sup_{\eta_1 \le t \le \eta_1 + T}|W_1(t)-W_1(\eta_1)|>xR^{2n}/4}
\\
\qquad\qquad + \Prob{|W_1(\eta_1)-W_1(\theta_1)|>xR^{2n}/4}
\\
\quad\le\quad \frac{C}{R^{f(a-1,b)}\sqrt{T}} + \frac{CT}{x^2R^{4n}} + \frac{C}{(xR^{2n})^{2/3}}.
\end{multline*}
The bound $\Prob{|W_1(\eta_1)-W_1(\theta_1)|>xR^{2n}/4}\le \frac{C}{(xR^{2n})^{2/3}}$ follows from the calculations leading to \eqref{eq:later}, 
where, in fact,  we obtained the following bound when \(x \ge \frac{1}{R^{2n}}\):
$$
\Prob{\sup_{\tau_1 \le t \le \eta_1}|W_1(t)-W_1(\theta_1)| > x R^{2n}}
\quad\le\quad \frac{C}{(xR^{2n})^{2/3}}\,.
$$
Taking $T=(xR^{2n})^{4/3}$, we obtain the following when \(x\ge2\):
\begin{align}\label{eq:l2}
\Prob{\sup_{\eta_1 \le t \le \lambda_1}\left|\frac{W_1(t)}{W_1(\theta_1)}\right|>x} 
\quad\le\quad
\frac{C}{(xR^{2n})^{2/3}}\,.
\end{align}
Similar estimates for $\Prob{\sup_{\eta_1 \le t \le \lambda_1}\left|\frac{\tW_1(t)}{\tW_1(\theta_1)}\right|>x}$ 
and $\Prob{\sup_{\eta_1 \le t \le \lambda_1}\left|\frac{W_2(t)}{W_2(\theta_1)}\right|>x}$ 
follow by replacing $W_1$ by $\tW_1$ and $W_2$ respectively in the above calculations (in the latter case, we use \eqref{newbd1}).
Using these estimates along with \eqref{eq:l2} and \eqref{eq:tutu}, we obtain for $x \ge 2^{a+b}$,
\begin{multline}\label{eq:l3}
\Prob{\sup_{\eta_1 \le t \le \lambda_1}|\Delta W_1(t)| |W_1(t)|^{a-k}|\tW_1(t)|^{k-1}|W_2(t)|^b > x}
\\ 
\quad=\quad \Prob{ \left(|\Delta W_1(t)||W_1(\theta_1)|^{a+b-1}R^b\right)\left|\frac{W_1(t)}{W_1(\theta_1)}\right|^{a-k}\left|\frac{\tW_1(t)}{\tW_1(\theta_1)}\right|^{k-1}\left|\frac{W_2(t)}{W_2(\theta_1)}\right|^b > x}
\\
\quad\le\quad \Prob{\sup_{\eta_1 \le t \le \lambda_1} |\Delta W_1(t)||W_1(\theta_1)|^{a+b-1} R^b \ge x^{\frac{1}{a+b}}} + \Prob{\sup_{\eta_1 \le t \le \lambda_1}\left|\frac{W_1(t)}{W_1(\theta_1)}\right|>x^{\frac{1}{a+b}}}
\\
\quad\quad\qquad + \Prob{\sup_{\eta_1 \le t \le \lambda_1}\left|\frac{\tW_1(t)}{\tW_1(\theta_1)}\right|>x^{\frac{1}{a+b}}} + \Prob{\sup_{\eta_1 \le t \le \lambda_1}\left|\frac{W_2(t)}{W_2(\theta_1)}\right|>x^{\frac{1}{a+b}}}
\\
\quad\le\quad \frac{1}{x^{\frac{1}{a+b}}} + \frac{C}{(x^{\frac{1}{a+b}}R^{2n})^{2/3}} 
\quad\le\quad Cx^{-\frac{2}{3(a+b)}}\,,
\end{multline}
where the last step follows as $R>1$.
From \eqref{eq:lambdatail}, \eqref{eq:l3} and the first assertion in part (i) of Lemma \ref{lem:tail1}, 
it follows that there are $\delta, \gamma >0$ not depending on $R$ such that
\begin{align*}
\Prob{\left|\int_{\eta_1}^{\lambda_1}({W_1(s)^a} - {\tW_1(s)^a}){W_2(s)^b}{\d}W_2(s)\right|>x}
\quad\le\quad \frac{C}{(R^{\delta}x)^{\gamma}}
\end{align*}
for $x \ge 2^{a+b}/R^{\delta}$.
Arguing as above, using \eqref{eq:lambdatail}, and \eqref{eq:l3} but with $b-1$ replacing $b$, 
and appealing to the second assertion in part (i) of Lemma \ref{lem:tail1}, if $b \ge 1$ then
\begin{align*}
\Prob{\left|\int_{\eta_1}^{\lambda_1}\Delta W_1(s) W_1(s)^{a-k}\tW_1(s)^{k-1}W_2(s)^{b-1} {\d}s\right| >x}
\quad\le\quad \frac{C}{(R^{\delta}x)^{\gamma}}
\end{align*}
for $x \ge 2^{a+b-1}/R^{\delta}$.
From the above two bounds, if $x \ge 2^{a+b}/R^{\delta}$ then
\begin{align}\label{eq:lambdaend}
\Prob{|\Delta I_{(a,b)}(\lambda_1)- \Delta I_{(a,b)}(\eta_1)| >x}
\quad\le\quad
\frac{C}{(R^{\delta}x)^{\gamma}}\,.
\end{align}
% for $x \ge 2^{a+b}/R^{\delta}$.

%----------------------------
\medskip\noindent
\textbf{Phase 5: }
The final phase concerns the interval $[\lambda_1, \beta_1]$,
in which the Brownian motions $(W_1,W_2)$ and $(\tW_1, \tW_2)$ 
are coupled synchronously till the time $\beta_1$ when $(W_1,W_2)=\tW_1, \tW_2)$ hits the line $u_2=Ru_1$.
% \xNB[WSK1]{Ahem! The variable $x$ is used in two quite different ways in the Phase 5 argument. I've altered text accordingly.}
Since $(W_1,W_2)=(\tW_1, \tW_2)$ during this time interval, $\Delta I_{(a,b)}(\beta_1)=\Delta I_{(a,b)}(\lambda_1)$.
We claim there is a positive constant $C$ not depending on $R$ such that
\begin{align}\label{eq:betatail}
\Prob{\beta_1 - \lambda_1 >t} \quad\le\quad Ct^{-1/6}\,.
\end{align}
To see this, observe that $\beta_1-\lambda_1$ depends on how far away the Brownian motion $(W_1, W_2)$ is from the line $u_2=Ru_1$ at time $\lambda_1$.
As $W_2(\theta_1)=RW_1(\theta_1)$, this distance, in turn, depends on the size of the total duration $\lambda_1-\theta_1$ of the previous three phases.
Indeed, for any $\alpha, x >0$ (to be chosen later),
\begin{align*}
\Prob{\beta_1 - \lambda_1 >t} 
\quad&\le\quad \Prob{\beta_1 - \lambda_1 >t, \; \lambda_1-\theta_1 \le t^{\alpha}} + \Prob{\lambda_1-\theta_1 > t^{\alpha}}\\
\quad&\le\quad \Prob{|(W_1,W_2)(\lambda_1) - (W_1,W_2)(\theta_1)| >x, \; \lambda_1-\theta_1 \le t^{\alpha}}\\
\quad&\qquad + \Prob{|(W_1,W_2)(\lambda_1) - (W_1,W_2)(\theta_1)| \le x, \; \beta_1 - \lambda_1 >t}\\ \quad&\qquad + \Prob{\lambda_1-\theta_1 > t^{\alpha}}
\,.
\end{align*}
To estimate the first probability above, note that an application of the strong Markov property at time $\theta_1$
allows us to deduce
\begin{multline*}
\Prob{|(W_1,W_2)(\lambda_1) - (W_1,W_2)(\theta_1)| >x, \; \lambda_1-\theta_1 \le t^{\alpha}}\\
\quad\le\quad \Prob{\sup_{s \in[\theta_1, \theta_1 + t^{\alpha}]}|(W_1,W_2)(s) - (W_1,W_2)(\theta_1)| >x}
\quad\le\quad C\frac{t^{\alpha/2}}{x}\,,
\end{multline*}
where the last inequality follows from Doob's submartingale inequality applied to the radial part of two-dimensional Brownian motion.
The second probability is controlled by conditioning on the past event $[|(W_1,W_2)(\lambda_1) - (W_1,W_2)(\theta_1)| \le x]$ 
and using the strong Markov property to argue that the hitting time on the line $u_2=Ru_1$ by the Brownian motion 
$((W_1,W_2)(t) - (W_1,W_2)(\lambda_1): t \ge \lambda_1)$ is stochastically dominated by 
the hitting time on zero by a one dimensional Brownian motion starting from $x$.
Therefore,
$$
\Prob{|(W_1,W_2)(\lambda_1) - (W_1,W_2)(\theta_1)| \le x, \; \beta_1 - \lambda_1 >t} 
\quad\le\quad C\frac{x}{\sqrt{t}}\,.
$$
From \eqref{eq:tautail}, \eqref{eq:etatail} and \eqref{eq:lambdatail}, we deduce that
\begin{align}\label{eq:tailhalf}
\Prob{\lambda_1-\theta_1 > t^{\alpha}} 
\quad\le\quad
Ct^{-\alpha/2}\,.
\end{align}
Putting these bounds together, it follows that
\begin{align*}
\Prob{\beta_1 - \lambda_1 >t} 
\quad\le\quad
C\frac{t^{\alpha/2}}{x} + C\frac{x}{\sqrt{t}} + Ct^{-\alpha/2}\,.
\end{align*}
The target inequality \eqref{eq:betatail} is obtained by taking $\alpha=1/3$ and $x = t^{1/3}$ in the above bound.

\noindent
From \eqref{eq:tauest}, \eqref{eq:etaend} and \eqref{eq:lambdaend}, we see that there exist positive constants $C_1, C_2, \delta, \gamma$ not depending on $R, w, i$ such that
\begin{align}\label{eq:coupleclose1}
\Prob{\left|\Delta I_{(a,b)}(\beta_1)\right|>x}
\quad\le\quad
\frac{C_1}{(R^{\delta}x)^{\gamma}} \quad \quad \text{ for } x \ge C_2/R^{\delta}.
\end{align}

%----------------------------
\medskip\noindent
\textbf{B: Describing subsequent cycles and successful coupling}\\
The above account gives a description of the five phases that constitute the first cycle.
Subsequent cycles are defined similarly as follows:
% \xNB[WSK1]{I've checked English but not mathematics of the final section ``Describing subsequent cycles and successful coupling'' of the proof of Lemma \ref{lem:couple1}.}

For $t \ge \beta_1$, we apply scaling using Lemma \ref{lem:scaling} with $r= \left|\Delta I_{(a,b)}(\beta_1)\right|^{-1/(a+b+1)}$ 
and define further stopping times $\theta_2, \dots, \beta_2$ corresponding to $\theta_1, \dots, \beta_1$ for the scaled process, 
and continue in this fashion to obtain successive cycles.
As in the proof of Lemma \ref{lem:Heisenberg}, in order to show that constructing these cycles leads to a successful coupling
we need to show that $\lim_{k \rightarrow \infty}\Delta I_{(a,b)}(\beta_k) = 0$, and $\lim_{k \rightarrow \infty}\beta_k < \infty$ almost surely. 
This would imply that the end points of these cycles have an accumulation point and thus that the coupling is successful in finite time.
We now demonstrate that these facts follow from the estimates obtained above, \emph{via} Lemma \ref{lem:tail2}.

For $k \ge 1$, if $|\Delta I_{(a,b)}(\beta_{k-1})|=0$, then the coupling is successful.
If the coupling is not successful, define $\displaystyle{X_k=\frac{R^{\delta}|\Delta I_{(a,b)}(\beta_k)|}{|\Delta I_{(a,b)}(\beta_{k-1})|}}$, 
where $\delta$ is as used in \eqref{eq:coupleclose1} and we adopt the convention that $\beta_0=0$.
Taking $\tau_k=1$ for $k \ge 1$, we see that $X_k$, $\tau_k$ satisfy the hypotheses of Lemma \ref{lem:tail2}, and thus we obtain $R'_0>1$ such that for all $R \ge R'_0$, 
$$
\sum_{k=1}^{\infty}R^{-k\delta}\left(\Pi_{j=1}^{k}X_j\right) \quad<\quad \infty
$$
almost surely. In particular this implies that almost surely
$$
\lim_{k \rightarrow \infty} R^{-k\delta}\left(\Pi_{j=1}^{k}X_j\right)
\quad=\quad \lim_{k \rightarrow \infty}|\Delta I_{(a,b)}(\beta_k)|
\quad=\quad0\,.
$$
% almost surely.

Choose and fix any $R \ge R'_0$.
 From \eqref{eq:thetatail}, \eqref{eq:tautail}, \eqref{eq:etatail}, \eqref{eq:lambdatail} and \eqref{eq:betatail}, we have $\alpha>0$ such that
\begin{equation}\label{eq:fin}
\Prob{\beta_1>R^{4n+2}t} \quad\le\quad Ct^{-\alpha}
\end{equation}
for $t \ge 1$.
Write $\mathcal{F}_k = \sigma\{(W_1(s), W_2(s))\, : \, s \le \beta_k\}$.
Define $\displaystyle{\tau^*_k=\frac{|\beta_k-\beta_{k-1}|}{\left|\Delta I_{(a,b)}(\beta_{k-1})\right|^{2/(a+b+1)}}}$ for $k \ge 1$. By \eqref{eq:fin}, $\tau^*_k$ satisfies
$$
\Prob{\tau^*_{k+1} > R^{4n+2} t \mid \mathcal{F}_k} \quad\le\quad C t^{-\alpha}\,.
$$
for $t \ge 1$.
Define $\displaystyle{X^*_k=\frac{R^{2\delta/(a+b+1)}|\Delta I_{(a,b)}(\beta_k)|^{2/(a+b+1)}}{|\Delta I_{(a,b)}(\beta_{k-1})|^{2/(a+b+1)}}}$ for $k \ge 1$, where $\delta$ is the same as that used in \eqref{eq:coupleclose1}.
By \eqref{eq:coupleclose1}, observe that
$$
\Prob{X^*_{k+1} > x \mid \mathcal{F}_k} \quad\le\quad Cx^{-\gamma (a+b+1)/2} \quad \text{ for } x \ge 1.
$$
The following holds:
$$
\beta_{k+1}\quad=\quad \tau^*_1+\sum_{l=1}^{k}R^{-2l\delta/(a+b+1)}\left(\Pi_{j=1}^{l}X^*_j\right)\tau^*_{l+1}\,.
$$
Thus, for any $\gamma' < \alpha \wedge \frac{\gamma (a+b+1)}{2}$, 
using Lemma \ref{lem:tail2} with $(X^*_i, \tau^*_i/R^{4n+2})$ in place of $(X_i, \tau_i)$, we obtain $R''_0 \ge R'_0$ such that for every $R \ge R''_0$,
\begin{align*}
\Prob{\tau^*_1+\sum_{l=1}^{\infty}R^{-2l\delta/(a+b+1)}\left(\Pi_{j=1}^{l}X^*_j\right)\tau^*_{l+1}>R^{4n+2}t} \quad\le\quad Ct^{-\gamma'}\,.
\end{align*}
This shows that the coupling construction represented by $\mathbb{P}_R$ 
yields an almost surely successful coupling with coupling time given by $T_{R,(a,b)}=\lim_{k \rightarrow \infty} \beta_k$. $R_0$ claimed in the theorem can be taken to be $R''_0$.

From the coupling construction, we see that the \emph{active region} $S_{R,(a,b)}$ referred to in the theorem can be written as
% \xNB[WSK1]{``the regions $S_1, S_{R,(a,b)}$'': really need to speak only of the ``renormalization region $S_1$. Worth explaining why the $\beta_i$ occur in the ''middle`` of the renormalization intervals.}
\begin{equation*}
%S_1 \quad&=\quad \bigcup_{i=1}^{\infty}([\beta_{i-1}, \theta_i] \cup [\lambda_i, \beta_i]\,,
%\\
S_{R,(a,b)} \quad=\quad\bigcup_{k=1}^{\infty}[\theta_k,\lambda_k]\,.
\end{equation*}
The estimate on the tail probabilities of $|S_{R,(a,b)}|$,
claimed in the statement of the lemma, 
follows from Lemma \ref{lem:tail2} using an argument similar to that given above, 
after re-scaling by considering $\frac{|\lambda_k-\theta_k|}{\left|\Delta I_{(a,b)}(\beta_{k-1})\right|^{2/(a+b+1)}}$ for $\tau^*_k$ 
(in fact, it follows from \eqref{eq:tailhalf} that the tail estimate holds for any $\gamma<1/2$).

Assertion (ii) claimed in the lemma follows first from observing that
$$
\sup_{t \le T_{R, (a,b)}} | \Delta W_1(t)| \quad\le\quad \sum_{l=1}^{\infty}\sup_{t \in [\theta_l, \lambda_l]} | \Delta W_1(t)| 
$$
and then from applying Lemma \ref{lem:tail2} with $(\sqrt{X^*_k}, M^*_k)$ in place of $(X_k, \tau_k)$, where
$$
M^*_k\quad=\quad
R^{f(a-1,b)}\frac{\sup_{t \in [\theta_k, \lambda_k]} | \Delta W_1(t)|}{|\Delta I_{(a,b)}(\beta_{k-1})|^{1/(a+b+1)}}\,.
$$
The tail estimates for $M^*_k$ needed to apply Lemma \ref{lem:tail2} are derived by recalling $|W_1(\theta_1)| \ge R^{2n}$ and applying scaling to deduce for $x \ge 1$
\begin{multline*}
\Prob{M^*_{k+1} > x \mid \mathcal{F}_k} \le \Prob{R^{f(a-1,b)}\sup_{t \in [\theta_1, \lambda_1]} | \Delta W_1(t)|> x}\\
= \Prob{R^{f(a-1,b)}\sup_{t \in [\eta_1, \lambda_1]} | \Delta W_1(t)|> x}
\le  \Prob{\sup_{t \in [\eta_1, \lambda_1]} | \Delta W_1(t)||W_1(\theta_1)|^{a+b-1}R^b> x} =x^{-1},
\end{multline*}
where the last step follows from \eqref{eq:l1}.
\end{proof}

%%%%%%%%%%%%%%%%%%%%%%%%%%%%%%%%%%%%%%%%%%%%%%%%%%%%%%%%%%%%
\section{Simultaneously coupling multiple monomial Stratonovich integrals}\label{sec:coup-multiple}
This section describes the construction of a successful coupling based on
a driving \(2\)-dimensional Brownian motion $(W_1, W_2)$ and the complete
% \xNB[WSK1]{Define the adjective ``complete'' in ``complete finite set of monomial stochastic integrals''? \textbf{SB1}: See if this works.}
finite set of 
monomial stochastic integrals up to a given scaling degree $n$, given by $(I_{(a,b)}: a \ge 1,\; b \ge 0,\; a + b \le n)$.
The construction uses an inductive strategy; coupling first
at the level of monomial stochastic integrals $I_{(k,l)}$ for all $(k,l) \prec (a,b)$ 
and then coupling $I_{(a,b)}$ while ensuring that the lower order integrals do not deviate too far from coupled agreement.

Recall
$$
\mathbf{X}_{(a,b)}
\quad=\quad
\left(W_1, I_{(c,d)} ; {(c,d) \preceq (a,b)}, \ c \ge 1\right)\,.
$$
We will abbreviate the complete set of monomial stochastic integrals (up to \(I_{(0,n)}\)) as
$$
\X(t) \quad=\quad \X_{(0,n)}(t),\, \ t \ge 0\,.
$$
$\tX_{(a,b)}$ and $\widetilde{\X}$ are defined in a similar manner.

The main theorem of this article states the existence of this successful coupling and estimates the rate at which it happens. In the following, we will need a simple norm on quantities such as $\X$; we use Euclidean norm viewing $\X$ as a vector in the Euclidean space of appropriate dimension.
\begin{thm}\label{thm:couplemain}
For any pair of starting points $\X(0)$ and $\tX(0)$
there exists a successful Markovian coupling construction $\mathbb{P}$ of $\X$ and $\tX$, 
with coupling time $\CouplingTime$ satisfying the following rate estimate:\\
There are positive constants $C$, $\gamma$ such that if \(t\geq1\) then
$$
\sup\{\Prob{\CouplingTime > t}\;:\; |\X(0)| \le 1\,,\; |\tX(0)| \le 1\} \quad\le\quad Ct^{-\gamma}\,.
$$
\end{thm}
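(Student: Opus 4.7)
The plan is to prove the theorem by strong induction on $(a,b) \in \Delta_n$ (with $a \ge 1$) along the total order $\preceq$, establishing progressively a successful Markovian coupling of $\X_{(a,b)}$ from arbitrary starting points in a ball of fixed radius, with the coupling time having a power-law tail. The base case $(a,b)=(1,0)$ is furnished by Lemma \ref{lem:Heisenberg}. The inductive step combines Lemma \ref{lem:couple1}, applied to the single integral $I_{(a,b)}$, with the coupling for the predecessor $\X_{(a^-,b^-)}$ supplied by the induction hypothesis. Throughout, $W_2 = \widetilde{W}_2$ is maintained by synchronous coupling, and $W_1,\widetilde{W}_1$ are first brought to a common point by a preliminary reflection coupling, reducing the problem to the case in which the two copies differ only through the monomial integrals.

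The coupling for $\X_{(a,b)}$ is constructed from iterated \emph{macro-cycles}, each consisting of two sub-phases. In the \emph{outer sub-phase}, one cycle of the Lemma \ref{lem:couple1} construction is run, aimed at $I_{(a,b)}$ with tuning parameter $R$; this contracts $|\Delta I_{(a,b)}|$ by a factor of order $R^{-\delta}$ with power-tail control. During the active region $S_{R,(a,b)}$ the reflection coupling forces $W_1 \ne \widetilde{W}_1$, so the lower-order integrals $I_{(c,d)}$ (for $(c,d) \prec (a,b)$ with $c\ge 1$) drift; Lemma \ref{lem:couple1}(ii) controls $\sup|\Delta W_1|$ by $x/R^{f(a-1,b)}$, and since the active region has polynomially tailed duration by Lemma \ref{lem:couple1}(i), the induced drift in each $\Delta I_{(c,d)}$ is quantitatively small in $R$. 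In the \emph{inner sub-phase}, the inductive coupling for $\X_{(a^-,b^-)}$ is invoked, after re-scaling by Lemma \ref{lem:scaling} so that the lower-order differences are of unit size in rescaled coordinates, in order to re-align the lower-order integrals. Because $\preceq$ extends the partial order induced by scaling degree $\degree(I_{(c,d)})=c+d+1$, the inner coupling runs on a strictly shorter timescale than the outer one. A further Brownian re-scaling between successive macro-cycles renormalizes $|\Delta I_{(a,b)}|$ to unity, allowing Lemma \ref{lem:couple1} to be reapplied cycle after cycle.

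The global tail estimate then follows from Lemma \ref{lem:tail2}, with $X_k^*$ taken as the rescaled contraction ratio of $|\Delta I_{(a,b)}|$ across successive macro-cycles (whose power-tail bound carries an $R^{-\delta}$ prefactor by Lemma \ref{lem:couple1}(i)), and $\tau_k^*$ the rescaled macro-cycle duration (whose power-tail bound is the sum of the outer-phase estimate from Lemma \ref{lem:couple1}(i) and the inner-phase estimate from the inductive hypothesis). Choosing $R$ sufficiently large makes the parameter $\eps_0$ of Lemma \ref{lem:tail2} small enough that $\lim_{k\to\infty}\Delta I_{(a,b)}=0$ almost surely while simultaneously producing a power-tail bound on the total duration, which yields the theorem. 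The main obstacle is controlling the cross-interaction within each macro-cycle. On the one hand, the outer-phase drift of each $\Delta I_{(c,d)}$ with $(c,d)\prec(a,b)$ must be bounded by combining the control of $|\Delta W_1|$ from Lemma \ref{lem:couple1}(ii) with BDG-type estimates on the unbounded polynomial factors $W_1^c W_2^d$, in the spirit of Lemma \ref{lem:tail1}, together with uniform control of $\sup|W_1|,\sup|W_2|$ over the active region. On the other hand, the reflection of $W_1$ inside the inner-phase coupling injects noise into $\Delta I_{(a,b)}$, and one must verify that the scaling-degree gap $\degree(I_{(a,b)})>\degree(I_{(c,d)})$ suppresses this injected drift so that the net contraction of $|\Delta I_{(a,b)}|$ is preserved. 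This delicate interplay, together with the required uniformity of constants in the starting configuration, is essentially the reason the inductive ordering must extend the scaling-degree partial order.
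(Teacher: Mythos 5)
Your overall architecture (induction along $\preceq$ starting from Lemma \ref{lem:Heisenberg}, macro-cycles alternating between the inductive coupling of the lower-order block and the Lemma \ref{lem:couple1} machinery for $I_{(a,b)}$, re-scaling between cycles, and Lemma \ref{lem:tail2} for the global tail) matches the paper. But the way you organize each macro-cycle differs in a respect that matters, and it leaves a gap you flag but do not close. The paper runs the inductive coupling of $\X_{(a^-,b^-)}$ \emph{first} and then runs the Lemma \ref{lem:couple1} coupling of $I_{(a,b)}$ \emph{to completion}, so that at the end of each macro-cycle $\Delta I_{(a,b)}=0$ exactly and the quantity that must be contracted (with the crucial $R^{-\gamma}$ prefactor of \eqref{eq:mb1}, obtained from the degree gap via \eqref{eq:dec3}) is the lower-order discrepancy. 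With that arrangement, the noise injected into $\Delta I_{(a,b)}$ by the inductive phase only needs to be \emph{power-tail bounded at scale $O(1)$}, which the paper gets from the crude triangle-inequality/BDG bound \eqref{eq:hulahu}--\eqref{eq:hula2} without knowing anything about the internal structure of the inductive coupling.

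In your scheme the roles are reversed: the outer sub-phase contracts $|\Delta I_{(a,b)}|$ only by a factor $R^{-\delta}$ (one cycle of Lemma \ref{lem:couple1}), and the inner sub-phase then re-couples the lower block exactly while injecting noise back into $\Delta I_{(a,b)}$. For the net contraction of $|\Delta I_{(a,b)}|$ to survive, that injected noise must be quantitatively \emph{small} (of order $o(R^{-\delta})$ after re-scaling), not merely $O(1)$ with power tails. The crude bound of type \eqref{eq:hula2} cannot deliver this: it bounds the two integrals separately and so produces a term growing polynomially in the (unscaled) positions of $W_1,W_2$, which after the outer phase satisfy $|W_1|\ge R^{2n}$. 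To get genuine smallness you would need the difference form $\int \Delta W_1\cdot(\cdots)\,{\d}W_2$ together with control of $\sup|\Delta W_1|$ and of the active region \emph{of the inductive coupling itself} — i.e., the induction hypothesis would have to be strengthened to carry analogues of Lemma \ref{lem:couple1}(i)--(ii) for the whole coupled array, not just a tail bound on $T_{(a^-,b^-)}$. As stated, your hypothesis does not supply these estimates, and your appeal to the scaling-degree gap is a heuristic rather than a proof. Either strengthen the induction hypothesis accordingly, or reorder the macro-cycle as the paper does (inductive phase first, then Lemma \ref{lem:couple1} run to completion), which dissolves the difficulty.
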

\begin{proof}
As before, $C, \gamma$ will denote generic positive constants whose values will change from line to line.
The constant $R>1$ is a tuning parameter for the coupling construction:
its value will be specified later.

By a combination of reflection coupling and then synchronous coupling,
we may assume that the starting points satisfy $(W_1, W_2)(0)=(\tW_1, \tW_2)(0)$ and $W_2(0)=RW_1(0)$.
We will write this as $(\X(0), \tX(0)) \in \mathcal{R}$ where
\begin{multline*}
\mathcal{R}\;=\;
\left\lbrace({w},\widetilde{{w}}) : (w_1,w_2) = (\widetilde{w}_1,\widetilde{w}_2), w_2=Rw_1, 
\text{ remaining coordinates of }\right.\\
\left. {w}, \widetilde{{w}} \text{ unconstrained} \right\rbrace \,.
\end{multline*}
%
% \xNB[WSK1]{\textcolor{blue}{\textbf{NOTATION CLASH:}} Here $\mathcal{S}$ is a set but elsewhere $\mathcal{S}_R$ is a scaling operation! \textbf{SB1}: Fixed it. Please check.}
% \xNB[WSK1]{\textcolor{red}{\textbf{WORRY:}} Do we need a size constraint on the starting set $\mathcal{S}$? \textbf{SB1}: Checked and seems fine.}
At the end of the proof we will check that the rate of coupling is not affected by the time taken to arrange for this.
% The coupling starting from general points will be constructed using the coupling from these special starting points, 
% as we will see later.

%----------------------------
\medskip
The main body of the proof is based on induction on the number of \(\preceq\)-ordered monomial Stratonovich
integrals to be coupled.
\\
\noindent
\emph{Induction hypothesis: } 
Define $\Delta  \X_{(a,b)} = \X_{(a,b)}-\tX_{(a,b)}$. For any $(a,b) \in \Delta_n$, there exists a successful Markovian coupling between 
the arrays of monomial Stratonovich integrals
$\X_{(a,b)}$ and $\tX_{(a,b)}$, and between \(W\) and \(\tW\),
with coupling time $T_{(a,b)}$ such that for all \(t\ge1\)
% \xNB[WSK1]{Do we need to define $|\Delta \X_{(a,b)}(0)|$? \textbf{SB1}: Defined it in the first line of the induction hypothesis.}
$$
\sup
% _{|\Delta \X_{(a,b)}(0)| \le 1, (\X(0), \tX(0)) \in \mathcal{S}} 
\{\Prob{T_{(a,b)} >t} \;:\; |\Delta \X_{(a,b)}(0)| \le 1,\; (\X(0), \tX(0)) \in \mathcal{R}\}
\quad\le\quad Ct^{-\gamma}\,.
$$
for positive constants $C$, $\gamma$. 
% \xNB[WSK1]{Further to comment above: should $C$ and $\gamma$ depend on starting points? \textbf{SB1}: The sup ensures that the constants do not depend on the starting points.}
%Here $\Delta  \X_{(a,b)} = \X_{(a,b)}-\tX_{(a,b)}$.

% \medskip\noindent
By Lemma \ref{lem:scaling} there is no loss of generality 
in assuming that (a) the starting points $\X_{(a,b)}(0)$ and $\tX_{(a,b)}(0)$ 
satisfy $(\X(0), \tX(0)) \in \mathcal{R}$ and (b) $|\Delta \X_{(a,b)}(0)| =1$.

Lemma \ref{lem:Heisenberg} establishes the inductive hypothesis in the initial case of $(a,b)=(1,0)$,
since then 
% For $(a,b)=(1,0)$, the hypothesis is true by virtue of Lemma \ref{lem:Heisenberg} 
% as 
$(W_1, W_2)(0)=(\tW_1, \tW_2)(0)$ and $|\Delta I_{(1,0)}(0)| \le |\Delta \X_{(a,b)}(0)| =1$.

Consider $(a,b) \in \Delta_n$ such that $(1,0) \prec (a,b)$ (equivalently, $a+b>1$).
Let $(a^-,b^-)$ be the $\preceq$ predecessor of $(a,b)$.
% corresponding to the ordering scheme $``\preceq"$.
The inductive step of the proof is as follows:
suppose the induction hypothesis is true for $(a^-,b^-)$;
then it is required to show that the hypothesis is also true for $(a,b)$.
The key to this is to conduct a careful analysis of the cycles described informally above.
By scaling arguments, it is sufficient to do this for the first cycle,
and then to show how scaling arguments can be used to establish suitable convergence over the whole sequence of cycles.

If $a=0$, then from the definition of $\X_{(a,b)}$, $\X_{(a,b)}=\X_{(a^-,b^-)}$ (as remarked in Section \ref{sec:TP}) and there is nothing to prove. Therefore, we assume $a \ge 1$.
 
%----------------------------
\medskip\noindent
\textbf{A: Description of the first cycle}\\
We can write $\X_{(a,b)}=( \X_{(a^-,b^-)}, I_{(a,b)})$.
The three phases of the first cycle have end-points given by the following stopping times.
\begin{tabbing}
1: \= \(\sigma_1 = \inf\{ t \ge 0: \Delta \X_{(a^-,b^-)}(t)=0\}\), 
\quad \= Coupling of \(\X_{(a^-,b^-)}\) and \(\tX_{(a^-,b^-)}\)
\\
\>\> derived from inductive hypothesis,
\\
\>\> and note \(\X_{(a^-,b^-)}(\sigma_1)=\tX_{(a^-,b^-)}(\sigma_1)\);
\\
2: \> \(\sigma_2 = \inf\{ t \ge \sigma_1: \ R W_1(t) = W_2(t)\}\) \> synchronous coupling till \(R W_1 = W_2\);
\\
3: \> \(\sigma_3 > \sigma_2\)
\> Coupling strategy of Lemma \ref{lem:couple1}
\\
\>\>  after re-scaling
\(\X_{(a,b)}\), \(\tX_{(a,b)}\)
\\ 
\>\> using \(|\Delta I_{(a,b)}(\sigma_1)|^{-1/(a+b+1)}\)
\\
\>\> and note \((W_1, W_2)(\sigma_3)=(\tW_1,\tW_2)(\sigma_3)\),\\
\>\> \(I_{(a,b)}(\sigma_3) = \tI_{(a,b)}(\sigma_3)\).
\end{tabbing}

% Define the following stopping times corresponding to the end-points of the three phases in the first cycle:
% \begin{align*}
% \sigma_1 \quad&=\quad\inf\{ t \ge 0: \text{coupling of induction hypothesis corresponding to } (a^-,b^-), \ \Delta \X_{(a^-,b^-)}(t)=0\},\\
% \sigma_2 \quad&=\quad\inf\{ t \ge \sigma_1: \text{s.c.}, \ R W_1(t) = W_2(t))\},\\
% \sigma_3 \quad&=\quad \inf\{ t \ge \sigma_2: \text{coupling strategy of Lemma \ref{lem:couple1}}\\
%  &\quad \quad \quad \text{after re-scaling }\X_{(a,b)}, \tX_{(a,b)} \text{ by } |\Delta I_{(a,b)}(\sigma_1)|^{-1/(a+b+1)}\\
%  &\quad \quad \quad \text{ until } (W_1(t), W_2(t), I_{(a,b)}(t)) = (\tW_1(t), \tW_2(t), \tI_{(a,b)}(t))\}.
% \end{align*}
%Note that at $\sigma_1$, we have $\Delta \X_{(a,b)}(\sigma_1)=(0, \Delta I_{(a,b)}(\sigma_1))$.
Between $\sigma_1$ and $\sigma_2$, the two Brownian motions are coalesced
% \xNB[WSK1]{Good notation: elsewhere we should also use ``coalesced'' rather than ``move together''.}
and synchronously coupled.
Therefore $\Delta \X_{(a,b)}(\sigma_2)=(0, \Delta I_{(a,b)}(\sigma_1))$.

%----------------------------
\medskip\noindent
\textbf{Phase 1: }
At the end of this phase $\Delta \X_{(a,b)}(\sigma_1)=(0, \Delta I_{(a,b)}(\sigma_1))$.
By the induction hypothesis
\begin{align}\label{eq:hold}
\Prob{\sigma_1 >t} \quad\le\quad Ct^{-\gamma} \qquad \text{ for }t\geq1\,.
\end{align}
We need a tail bound on $\Prob{|\Delta I_{(a,b)}(\sigma_1)| > x}$ for $x \ge 2$.
Using \eqref{eq:hold}, $x \ge 2$ and $t \ge 1$,
\begin{align}\label{eq:hula1}
\Prob{|\Delta I_{(a,b)}(\sigma_1)| > x} 
\quad& \le\quad \Prob{\sigma_1>t} + \Prob{|\Delta I_{(a,b)}(\sigma_1)| > x, \; \sigma_1 \le t}\nonumber\\
\quad& \le\quad Ct^{-\gamma} + \Prob{|\Delta I_{(a,b)}(\sigma_1)| > x, \; \sigma_1 \le t}\,.
\end{align}
% where the second inequality above follows from \eqref{eq:hold}.
Since \(x \ge 2\) and \(|\Delta I_{(a,b)}(0)| \le 1\), the second probability satisfies
\begin{multline}\label{eq:hulahu}
 \Prob{|\Delta I_{(a,b)}(\sigma_1)| > x, \;\sigma_1 \le t} \quad\le\quad
 \\
 \Prob{\sup_{u \le t}\left|\int_0^u W^a_1(s)W^b_2(s)\circ {\d}W_2(s) - \int_0^u \widetilde{W}^a_1(s)W^b_2(s)\circ {\d}W_2(s) + \Delta I_{(a,b)}(0)\right| >x}
 \\
 \quad\le\quad 2\Prob{\sup_{u \le t}\left|\int_0^u W^a_1(s)W^b_2(s) \circ {\d}W_2(s)\right| >x/4}\,.
\end{multline}
Using the It\^o representation of $I_{(a,b)}$ (Equation \eqref{eq:itostrat}),
\begin{multline*}
\sup_{u \le t}\left|\int_0^u W^a_1(s)W^b_2(s) \circ {\d}W_2(s)\right| 
\quad\le\quad
\\
\sup_{u \le t}\left|\int_0^u W^a_1(s)W^b_2(s){\d}W_2(s)\right|
+ \frac{b}{2}\sup_{u \le t}\left|\int_0^u W^a_1(s)W^{b-1}_2(s) {\d}s\right|\,. 
\end{multline*}
Now Cauchy-Schwarz arguments yield
\begin{align*}
&\Expect{\sup_{u \le t}\left|\int_0^u W^a_1(s)W^b_2(s) \circ {\d}W_2(s)\right|^2}\\
& \quad \le \quad 
2 \Expect{\sup_{u \le t}\left|\int_0^u W^a_1(s)W^b_2(s) {\d}W_2(s)\right|^2}
 + \frac{b^2}{2}\Expect{\sup_{u \le t}\left|\int_0^u W^a_1(s)W^{b-1}_2(s) {\d}s\right|^2}\\
& \quad \le \quad
2 \Expect{\sup_{u \le t}\left|\int_0^u W^a_1(s)W^b_2(s) {\d}W_2(s)\right|^2}
 + \frac{b^2}{2}\Expect{\left(\int_0^t |W^a_1(s)||W^{b-1}_2(s)| {\d}s\right)^2}\,.
\end{align*}
By the BDG inequality,
\begin{multline*}
\Expect{\sup_{u \le t}\left|\int_0^u W^a_1(s)W^b_2(s) {\d}W_2(s)\right|^2}
\\
\quad\le\quad C \Expect{\int_0^t W^{2a}_1(s)W^{2b}_2(s) {\d}s}
\quad\le\quad C\int_0^t s^{a}s^{b} {\d}s 
\quad=\quad \frac{C t^{a+b+1}}{a+b+1}\,,
\end{multline*}
while a further application of the Cauchy Schwarz inequality yields
\begin{multline*}
\Expect{\left(\int_0^t |W^a_1(s)||W^{b-1}_2(s)| {\d}s\right)^2} 
\quad\le\quad t \Expect{\int_0^t W^{2a}_1(s)W^{2b-2}_2(s) {\d}s}\\
\quad\le\quad Ct\int_0^ts^as^{b-1}{\d}s 
\quad=\quad \frac{C t^{a+b+1}}{a+b}\,.
\end{multline*}
Using the Tchebychev inequality and the above two bounds together with Equation \eqref{eq:hulahu}, if $x \ge 2$ and $t \ge 1$ then
\begin{align}\label{eq:hula2}
\Prob{|\Delta I_{(a,b)}(\sigma_1)| > x, \; \sigma_1 \le t} 
\quad\le\quad \frac{Ct^{a+b+1}}{x^2}\,.
\end{align}
Combining inequalities \eqref{eq:hula1} and \eqref{eq:hula2},
and choosing
$t=x^{2/(a+b+1+\gamma)}$,
if \(x\geq2\) then
\begin{align}\label{eq:ibound}
\Prob{|\Delta I_{(a,b)}(\sigma_1)| > x} 
\quad\le\quad Cx^{-2\gamma / (a+b + 1 + \gamma)}\,.
\end{align}

%----------------------------
\medskip\noindent
\textbf{Phase 2: }
During this phase, the driving Brownian motions are synchronously coupled till $(W_1,W_2)$ hits the line $y=Rx$.
This is done to get to the starting configuration of the coupled processes in Lemma \ref{lem:couple1}.
Between $\sigma_1$ and $\sigma_2$, 
the two Brownian motions are coalesced and synchronously coupled and 
hence $\Delta \X_{(a,b)}(\sigma_2)=(0, \Delta I_{(a,b)}(\sigma_1))$.

To get a bound on the tail of the distribution of $\sigma_2-\sigma_1$, we rewrite it as follows, using \eqref{eq:hold}:
\begin{align*}
\Prob{\sigma_2-\sigma_1 >t} \quad&\le\quad
\Prob{\sigma_1 > t^{\alpha}} + \Prob{\sigma_2-\sigma_1 >t, \sigma_1 \le t^{\alpha}}\\
\quad&\le\quad Ct^{-\alpha\gamma} + \Prob{\sigma_2-\sigma_1 >t, \sigma_1 \le t^{\alpha}}\,,
\end{align*}
for $t \ge 1$,
% \xNB[WSK1]{This inequality is still true for $t>0$!}
and arbitrary $\alpha \in (0,1)$.
The second term above can be estimated
in terms of the distance of $(W_1,W_2)$ from the line $y=Rx$ at time $\sigma_1$,
in fact following the lines of the proof of \eqref{eq:betatail}: 
\begin{align}\label{eq:boundrep}
\Prob{\sigma_2-\sigma_1 >t, \;\sigma_1 \le t^{\alpha}} 
\quad&\le\quad \Prob{|(W_1,W_2)(\sigma_1)- (W_1,W_2)(0)| >x, \; \sigma_1 \le t^{\alpha}}\nonumber\\
& \qquad + \Prob{|(W_1,W_2)(\sigma_1) - (W_1,W_2)(0)| \le x, \; \sigma_2 - \sigma_1 >t}\,,
\end{align} 
where $x, \alpha >0$ will be chosen appropriately to optimize the bounds.
To estimate the first probability in \eqref{eq:boundrep}, note that
\begin{multline*}
\Prob{|(W_1,W_2)(\sigma_1)- (W_1,W_2)(0)| >x, \;\sigma_1 \le t^{\alpha}}\\
\quad\le\quad \Prob{\sup_{s \in[0, t^{\alpha}]}|(W_1,W_2)(s) - (W_1,W_2)(0)| >x}
\quad \le\quad Ct^{\alpha/2}x^{-1}\,.
\end{multline*}
To control the second probability in \eqref{eq:boundrep}, 
condition on the event $[|(W_1,W_2)(\sigma_1) - (W_1,W_2)(0)| \le x]$ 
and use the strong Markov property to argue that 
the hitting time on the line $y=Rx$ by the Brownian motion $((W_1,W_2)(t) - (W_1,W_2)(\sigma_1): t \ge \sigma_1)$ 
is stochastically dominated by the hitting time on zero by a one dimensional Brownian motion starting from $x$.
Therefore,
$$
\Prob{|(W_1,W_2)(\sigma_1) - (W_1,W_2)(0)| \le x, \sigma_2 - \sigma_1 >t} \quad\le\quad C\frac{x}{\sqrt{t}}\,.
$$
Using the above estimates in \eqref{eq:boundrep}, we obtain
$$
\Prob{\sigma_2-\sigma_1 >t, \; \sigma_1 \le t^{\alpha}} \quad\le\quad C\frac{t^{\alpha/2}}{x} + C\frac{x}{\sqrt{t}}\,.
$$
Using this and \eqref{eq:hold}, and choosing suitable values of $x$ and $\alpha$, we obtain $\gamma>0$ such that
\begin{align}\label{eq:hold2}
\Prob{\sigma_2 - \sigma_1>t} \quad\le\quad Ct^{-\gamma} \quad \text{ for } t \ge 1\,.
\end{align}

%----------------------------
\medskip\noindent
\textbf{Phase 3: }
In this phase, 
% the coupling in 
Lemma \ref{lem:couple1} is used to couple 
$(W_1,W_2,I_{(a,b)})$ with $(\tW_1,\tW_2,\tI_{(a,b)})$ 
while controlling the difference between the lower order integrals of the coupled processes.

Note that at time $\sigma_3$ the array \(\Delta \X_{(a,b)}(\sigma_3)\) 
of monomial Stratonovich integrals is obtained by appending
\(\Delta I_{(a,b)}(\sigma_3)=0\)
to the array \(\Delta \X_{(a^-,b^-)}(\sigma_3)\).
Now $\Delta \X_{(a^-,b^-)}(\sigma_3)$ 
will be the discrepancy between the coupled sets of integrals at the end of the third phase and thus,
it is necessary to control its size.
We do this by controlling the size (in an appropriate sense) of each individual integral appearing in $\Delta \X_{(a^-,b^-)}(\sigma_3)$
and showing 
the coupling strategy of Lemma \ref{lem:couple1} does not make this size large.
% that it is small (in an appropriate sense) under the coupling strategy of Lemma \ref{lem:couple1}.
Fix any $(k,l) \preceq (a^-,b^-)$.
Note that, as $\Delta I_{(k,l)}(\sigma_2)=\Delta I_{(k,l)}(\sigma_1)=0$, 
scaling yields the following distributional equality (where the second equality simply
involves rewriting the Stratonovich integral as the sum of an It\^o integral and a time integral):
\begin{multline}\label{eq:disteq}
\Delta I_{(k,l)}(\sigma_3)\quad\StochEquiv\quad 
U\int_0^{T_{R,(a,b)}} (B^k_1(s) -\tB^k_1(s))B^l_2(s) \circ {\d}B_2(s)
\\
\quad=\quad 
U\left(\int_0^{T_{R,(a,b)}} (B^k_1(s) -\tB^k_1(s))B^l_2(s) {\d}B_2(s)
 + \frac{l}{2}\int_0^{T_{R,(a,b)}} (B^k_1(s) -\tB^k_1(s))B^{l-1}_2(s){\d}s\right)
\end{multline}
where $U$ has the same distribution as $|\Delta I_{(a,b)} (\sigma_1)|^{(k+l+1)/(a+b+1)}$, 
and $(B_1, B_2)$ and $(\tB_1, \tB_2)$ are two-dimensional Brownian motions 
starting respectively from $$\left(\frac{W_1(\sigma_2)}{|\Delta I_{(a,b)} (\sigma_1)|^{1/(a+b+1)}}, 
\frac{W_2(\sigma_2)}{|\Delta I_{(a,b)} (\sigma_1)|^{1/(a+b+1)}}\right)$$
and 
$$\left(\frac{\tW_1(\sigma_2)}{|\Delta I_{(a,b)} (\sigma_1)|^{1/(a+b+1)}}, 
\frac{\tW_2(\sigma_2)}{|\Delta I_{(a,b)} (\sigma_1)|^{1/(a+b+1)}}\right),$$ 
and $T_{R,(a,b)}$ is the coupling time for the coupling construction of $$\left(B_1, B_2, \frac{I_{(a,b)}(\sigma_2)}{|\Delta I_{(a,b)} (\sigma_1)|} + \int B^a_1 B^b_2 \circ {\d}B_2\right)$$ and $$\left(\tB_1, \tB_2, \frac{\tI_{(a,b)}(\sigma_2)}{|\Delta I_{(a,b)} (\sigma_1)|} + \int \tB^a_1 \tB^b_2 \circ {\d}\tB_2\right)$$ 
given in Lemma \ref{lem:couple1}.
Furthermore, $U$ is independent of 
$((B_1(t)-B_1(0), B_2(t)-B_2(0)): t \ge 0)$ and $((\tB_1(t)-\tB_1(0), \tB_2(t)-\tB_2(0)): t \ge 0)$.

Define stopping times $\theta_j, \tau_j, \eta_j, \lambda_j, \beta_j, j \ge 1$ in the time interval $[0, T_{R,(a,b)}]$ 
as in the proof of Lemma \ref{lem:couple1}.
As the Brownian motions move together on the intervals $[\beta_{j-1}, \theta_j]$ and $[\lambda_j, \beta_j]$, 
the monomial Stratonovich integral $\Delta I_{(k,l)}$ does not change on these intervals.

On $[\theta_1, \lambda_1]$, the It\^o integral in \eqref{eq:disteq} can be written as
\begin{multline}\label{eq:dec1}
\int_{\theta_1}^{\lambda_1} (B^k_1(s) -\tB^k_1(s))B^l_2(s) {\d}B_2(s) 
\quad=\quad \int_{\theta_1}^{\lambda_1} \Delta B_1(s)\sum_{j=1}^{k-1}B^{k-j}_1(s)\tB^{j-1}_1(s)B^l_2(s) {\d}B_2(s)
\\
\quad=\quad
\int_{\theta_1}^{\lambda_1} (\Delta B_1(s)B_1^{k+l-1}(\theta_1)R^l)
  \sum_{j=1}^{k-1}\left(\frac{B_1(s)}{B_1(\theta_1)}\right)^{k-j}
    \left(\frac{\tB_1(s)}{\tB_1(\theta_1)}\right)^{j-1}
    \left(\frac{B_2(s)}{B_2(\theta_1)}\right)^l 
{\d}B_2(s)\\
\quad=\quad
\left( \int_{\theta_1}^{\lambda_1} (\Delta B_1(s)B_1^{a+b-1}(\theta_1)R^b)
  \sum_{j=1}^{k-1}\left(\frac{B_1(s)}{B_1(\theta_1)}\right)^{k-j}
    \left(\frac{\tB_1(s)}{\tB_1(\theta_1)}\right)^{j-1}
    \left(\frac{B_2(s)}{B_2(\theta_1)}\right)^l 
{\d}B_2(s)\right) \\
 \quad \quad \quad \quad \times \frac{1}{(B_1(\theta_1))^{(a+b)-(k+l)}R^{b-l}}\,.
\end{multline}
Using $\sup_{t \in [\theta_1, \tau_1]}|\Delta B_1(t)| =  \frac{1}{|B_1(\theta_1)|^{a+b-1}R^b}$ and
$\sup_{t \in [\tau_1, \eta_1]}|\Delta B_1(t)|= |\Delta B_1(\tau_1)|$
together with \eqref{eq:l1},
\begin{align}\label{align:ie1}
\Prob{\sup_{\theta_1 \le t \le \lambda_1} |\Delta B_1(s)||B_1^{a+b-1}(\theta_1)|R^b \ge x} \quad\le\quad \frac{1}{x}
\qquad\text{ for }x \ge 1\,.
\end{align}
% for $x \ge 1$.
% Now make appropriate modifications of 
The same line of argument employed to obtain
% \xNB[WSK1]{This is a bad reference to reasoning not explicitly referenced by equation number, but perhaps when we revise this argument we will make modifications. WSK needs to look at this with care! \textbf{SB1}: modified this part a bit. Please check.}
\eqref{eq:tauflucold} 
can be used to get the following bound. For $1 \le j \le k-1$, $x > 2^{k+l-1}$:
\begin{align*}
\Prob{\sup_{\theta_1 \le s \le \tau_1}
\left|\frac{W_1(s)}{W_1(\theta_1)}\right|^{k-j}
    \left|\frac{\tW_2(s)}{\tW_1(\theta_1)}\right|^{j-1}
    \left|\frac{W_2(s)}{ W_2(\theta_1)}\right|^l > x} 
\quad\le\quad 
\frac{2^{2(k+l-1)/l}}{R^{8n+4}x^{2/l}} \; C\,.
\end{align*}
(where the above bound is taken to be zero if $l=0$).
As a result, using \eqref{eq:bound2} and \eqref{eq:l2} (bounding
$\sup_{\tau_1 \le t \le \eta_1}\left|\frac{B_1(t)}{B_1(\theta_1)}\right|$ and 
$\sup_{\eta_1 \le t \le \lambda_1}\left|\frac{B_1(t)}{B_1(\theta_1)}\right|$;
the same bounds can be shown to hold when $B_1$ is replaced by $\tB_1$ and $B_2$), 
we can pick $\gamma>0$ such that, for any $1 \le j \le k-1$ and all $x > 2^{k+l-1}$,
\begin{align}\label{align:ie2}
\Prob{\sup_{\theta_1 \le t \le \lambda_1}
  \left|\frac{B_1(s)}{B_1(\theta_1)}\right|^{k-j}
  \left|\frac{\tB_1(s)}{\tB_1(\theta_1)}\right|^{j-1}
  \left|\frac{B_2(s)}{B_2(\theta_1)}\right|^l > x} 
\quad\le\quad
\frac{C}{(R^{2n}x)^{\gamma}}\,.
\end{align}
\eqref{align:ie1} and \eqref{align:ie2} together imply that there exists $\gamma>0$ such that for any $1 \le j \le k-1$ and all $x > 2^{k+l}$,
\begin{align}\label{eq:haha1}
\Prob{\sup_{\theta_1 \le s \le \lambda_1}|\Delta B_1(s)|
|B^{a+b-1}(\theta_1)|
R^b\left|\frac{B_1(s)}{B_1(\theta_1)}\right|^{k-j}
\left|\frac{\tB_1(s)}{\tB_1(\theta_1)}\right|^{j-1}
\left|\frac{B_2(s)}{B_2(\theta_1)}\right|^l > x} 
\quad\le\quad \frac{C}{x^{\gamma}}\,.
\end{align}
Also, recall from \eqref{eq:tailhalf} (using $\alpha=1$) that
\begin{align}\label{eq:haha2}
\Prob{\lambda_1-\theta_1 >t} \quad\le\quad Ct^{-1/2} \qquad \text{ for }t \ge 1\,.
\end{align}
Using \eqref{eq:haha1} and \eqref{eq:haha2} in Lemma \ref{lem:tail1}, we can chose positive $\gamma, x_0$ such that
(for $x \ge x_0$)
\begin{multline}\label{eq:dec2}
\Prob{\left|\int_{\theta_1}^{\lambda_1} (\Delta B_1(s)B^{a+b-1}(\theta_1)R^b)\sum_{j=1}^{k-1}\left(\frac{B_1(s)}{B_1(\theta_1)}\right)^{k-j}\left(\frac{\tB_1(s)}{\tB_1(\theta_1)}\right)^{j-1}\left(\frac{B_2(s)}{B_2(\theta_1)}\right)^l {\d}B_2(s)\right| >x} 
\\
\le \sum_{j=1}^{k-1}\Prob{\left|\int_{\theta_1}^{\lambda_1} (\Delta B_1(s)B^{a+b-1}(\theta_1)R^b)\left(\frac{B_1(s)}{B_1(\theta_1)}\right)^{k-j}\left(\frac{\tB_1(s)}{\tB_1(\theta_1)}\right)^{j-1}\left(\frac{B_2(s)}{B_2(\theta_1)}\right)^l {\d}B_2(s)\right| >\frac{x}{k}}
\\ 
\quad\le\quad Cx^{-\gamma}\,.
\end{multline}
% for $x \ge x_0$.
Furthermore, by virtue of the definition of the stopping time $\theta_1$ in the coupling construction,
and because $(k,l) \prec (a,b)$,
\begin{align}\label{eq:dec3}
\frac{1}{|(B_1(\theta_1))|^{(a+b)-(k+l)}R^{b-l}} 
\quad\le\quad \frac{1}{R^{f(a,b)-f(k,l)}} 
\quad\le\quad \frac{1}{R}
\,.
\end{align}
% as $(i,j) \prec (a,b)$.
Using \eqref{eq:dec2} and \eqref{eq:dec3} in \eqref{eq:dec1}, for $x \ge x_0/R$ we obtain
\begin{align}\label{eq:lala1}
\Prob{\left|\int_{\theta_1}^{\lambda_1} (B^k_1(s) -\tB^k_1(s))B^l_2(s) {\d}B_2(s)\right| >x} 
\quad\le\quad \frac{C}{(Rx)^{\gamma}}\,.
\end{align}
% for $x \ge x_0/R$.
Recall from Lemma \ref{lem:couple1} that $S_{R,(a,b)} =\bigcup_{j=1}^{\infty}[\theta_j,\lambda_j]$ and
% \xNB[WSK1]{Here is the only place we actually use $S_{R,(a,b)}$ the non-renormalization region!}
% \xNB[WSK1]{\textcolor{red}{\textbf{NOTE CAREFULLY:}} needed to replace index $i$ by $k$ or $j$ to avoid confusion with $i$ in $(i,j)\prec(a,b)$. \textcolor{blue}{Sayan to check this is not a problem elsewhere!} \textbf{SB1}: Changed notation at a lot of places in this proof. Please Check.}
$$
\int_{0}^{T_{R,(a,b)}} (B^k_1(s) -\tB^k_1(s))B^l_2(s) {\d}B_2(s)= \int_{S_{R,(a,b)}}(B^k_1(s) -\tB^k_1(s))B^l_2(s) {\d}B_2(s).
$$
Now apply Lemma \ref{lem:tail2} to the sum on the right hand side of
$$
R\int_{0}^{T_{R,(a,b)}} (B^k_1(s) -\tB^k_1(s))B^l_2(s) {\d}B_2(s)= \tau^{**}_1 + \sum_{j=1}^{\infty} R^{-(k+l+1)j\delta/(a+b+1)}\left(\Pi_{m=1}^jX^{**}_m\right)\tau^{**}_{j+1}
$$
with $(X^{**}_j,\tau^{**}_j)$ replacing $(X_j,\tau_j)$ in the statement of the lemma, where
$$
\tau^{**}_j\quad=\quad
\frac{R\left|\int_{\theta_j}^{\lambda_j} (B^k_1(s) -\tB^k_1(s))B^l_2(s) {\d}B_2(s)\right|}
  {|\Delta I_{(a,b)}(\beta_{j-1})|^{(k+l+1)/(a+b+1)}}
$$
and
$$
X^{**}_j\quad=\quad
\frac{R^{(k+l+1)\delta/(a+b+1)}
  |\Delta I_{(a,b)}(\beta_{j})|^{(k+l+1)/(a+b+1)}}
  {|\Delta I_{(a,b)}(\beta_{j-1})|^{(k+l+1)/(a+b+1)}}\,.
$$
Taking $\delta$ to be the same as that used in \eqref{eq:coupleclose1}, 
and setting $\beta_0=0$, we can find $\gamma' >0$ for which
\begin{align}\label{eq:ibound2}
\Prob{\left|\int_{0}^{T_{R,(a,b)}} (B^k_1(s) -\tB^k_1(s))B^l_2(s) {\d}B_2(s)\right| > x} 
\quad\le\quad \frac{C}{(Rx)^{\gamma'}}
\end{align}
for $x \ge x_0/R$, when $R$ is sufficiently large, where $x_0$ is the same as that used in \eqref{eq:lala1}.
Using the same estimates as above and using the assertions of Lemma \ref{lem:tail1} involving time integrals, 
for $l \ge 1$ it follows in a similar way that there is $\gamma''>0, x_1 >0$ such that
\begin{align}\label{eq:bound2p}
\Prob{\left|\int_0^{T_{R,(a,b)}} (B^k_1(s) -\tB^k_1(s))B^{l-1}_2(s){\d}s\right|>x} 
\quad\le\quad \frac{C}{(R^{2n+2}x)^{\gamma''}}
\end{align}
for $x \ge x_1/R^{2n+2}$, for $R$ sufficiently large.

Using the distributional equality \eqref{eq:disteq}, when $x \ge \max\{x_0^2,x_1^2,4\}/R$ 
and \(U\) is as defined for that equation,
we can find positive constants $\gamma, \gamma_1, \gamma_2$ such that
\begin{multline*}
\Prob{|\Delta I_{(k,l)}(\sigma_3)| > x} \quad=\quad 
\Prob{U
 \left|\int_0^{T_{R,(a,b)}} (B^k_1(s) -\tB^k_1(s))B^l_2(s) \circ {\d}B_2(s)\right| >x}\\
\quad\le\quad
\Prob{U \ge \sqrt{xR}} + 
\Prob{\left|\int_{0}^{T_{R,(a,b)}} (B^k_1(s) -\tB^k_1(s))B^l_2(s) \circ {\d}B_2(s)\right| > \sqrt{x/R}}\\
\quad=\quad 
\Prob{|\Delta I_{(a,b)}(\sigma_1)| > (\sqrt{xR})^{(a+b+1)/(k+l+1)}}\\
 \quad \quad \quad
 + \Prob{\left|\int_{0}^{T_{R,(a,b)}} (B^k_1(s) -\tB^k_1(s))B^l_2(s) \circ {\d}B_2(s)\right| > \sqrt{x/R}}\\
\quad \le\quad \frac{C}{(xR)^{\gamma_1} }+ \frac{C}{(xR)^{\gamma_2}} \quad\le\quad \frac{C}{(xR)^{\gamma}}\,.
\end{multline*}
% where $\gamma, \gamma_1, \gamma_2$ are positive constants.
The last step above follows from \eqref{eq:ibound}, \eqref{eq:ibound2} and \eqref{eq:bound2p}.
As this bound can be chosen to hold for all $(k,l) \prec (a,b)$ and $\Delta I_{(a,b)}(\sigma_3)=0$, 
we can choose positive constants $C, \gamma$ such that
\begin{align}\label{eq:mb1}
\Prob{|\Delta \X_{(a,b)}(\sigma_3)| > x} \quad\le\quad \frac{C}{(xR)^{\gamma}} \qquad \text{ when } x \ge C/R\,.
\end{align}
A bound on the tail of the law of $\sigma_3$ can be obtained using Lemma \ref{lem:couple1}:
if \(t\ge 1\) then
$$
\Prob{(\sigma_3-\sigma_2)/|\Delta I_{(a,b)}(\sigma_1)|^{2/(a+b+1)}>R^{4n+2}t} 
\quad\le\quad Ct^{-\gamma}\,.
$$
Together with \eqref{eq:ibound} this implies that when \(t\ge1\)
\begin{align}\label{eq:hold3}
\Prob{\sigma_3 - \sigma_2>R^{4n+2}t} \quad\le\quad Ct^{-\gamma}\,.
\end{align}
Thus, from \eqref{eq:hold}, \eqref{eq:hold2} and \eqref{eq:hold3}, when \(t\ge1\)
\begin{align}\label{eq:mb2}
\Prob{\sigma_3>R^{4n+2}t} \quad\le\quad Ct^{-\gamma}\,.
\end{align}

%----------------------------
\medskip\noindent
\textbf{B: Describing subsequent cycles and successful coupling}

\noindent
After completion of the first cycle, at time $\sigma_3$, 
we re-scale $\X_{(a,b)}$ and $\tX_{(a,b)}$ according to Lemma \ref{lem:scaling} by a (random) scaling $\mathcal{S}_{R_1}$ 
such that 
$$
|\mathcal{S}_{R_1} \left( \Delta \X_{(a,b)}\right)(\sigma_3)| \quad=\quad 1\,.
$$ 
Define $\sigma_4, \sigma_5, \sigma_6$ (for the original process)
corresponding to $\sigma_1, \sigma_2, \sigma_3$ for the coupled process after scaling exactly as before, and so on.
At each stopping time $\sigma_{3k}$, $k \ge 1$, 
we denote by $\mathcal{S}_{R_k}$ 
the (random) scaling that renormalizes at \(1\) the norm of the difference of the re-scaled processes.
% \xNB[WSK1]{Noted above as well: need to define formally ``the norm of the difference of the re-scaled processes''.}
% \xNB[WSK1]{Awkward clash of notation: $R$ is a deterministic tuning parameter while $R_k$ is the dilation factor of the random re-scaling $\mathcal{S}_{R_k}$. \textbf{SB1}: I chose this notation because $R_1$ and $R$ both relate to scaling and renormalisaton. Too many to change. So, I think we should leave as it is.}

For any $r \ge 1$, $t \ge 0$,
$$
r|\Delta \X_{(a,b)}(t)| 
\quad\le\quad |\mathcal{S}_{r} \left(\Delta \X_{(a,b)}\right)(t)| 
\quad\le\quad r^{a+b+1}|\Delta \X_{(a,b)}(t)|\,,
$$
with inequalities reversed if $r \le 1$.

For each $k \ge 1$,
$$
\left|\mathcal{S}_{(\Pi_{j=1}^k R_j)} \left(\Delta \X_{(a,b)}\right)(\sigma_{3k})\right| \quad=\quad 1\,.
$$
Therefore, 
if it can be shown that $\lim_{k \rightarrow \infty}\Pi_{j=1}^k R_j = \infty$
then it follows that $$\lim_{k \rightarrow \infty}|\Delta X_{(a,b)}(\sigma_{3k})|=0.$$ 
We achieve this by estimating the tail of the distribution of $R_1^{-1}$.
Note that if $R_1^{-1} \le1$, then from the above relations
$$
\frac{R^{1/(a+b+1)}}{R_1}\quad\le\quad |R\Delta \X_{(a,b)}(\sigma_{3})|^{1/(a+b+1)}
$$
while if $R_1^{-1} \ge 1$ then
$$
\frac{R}{R_1} \quad\le\quad |R\Delta \X_{(a,b)}(\sigma_{3})|\,.
$$
Thus, for $x \ge 1$, if $R^{1/(a+b+1)} > x$ then
\begin{align*}
\Prob{\frac{R^{1/(a+b+1)}}{R_1} \ge x} \quad& =\quad \Prob{\frac{R^{1/(a+b+1)}}{R_1} \in  [x, R^{1/(a+b+1)}]} + \Prob{\frac{R^{1/(a+b+1)}}{R_1} > R^{1/(a+b+1)}}\\
\quad&\le\quad \Prob{\frac{R^{1/(a+b+1)}}{R_1} \in  [x, R^{1/(a+b+1)}]} + \Prob{\frac{R^{1/(a+b+1)}}{R_1} > x, \; R_1^{-1}>1}\\
\quad&\le\quad \Prob{\frac{R^{1/(a+b+1)}}{R_1} \in [x, R^{1/(a+b+1)}]} + \Prob{\frac{R}{R_1} > x, \; R_1^{-1}>1}\\
\quad&\le\quad \Prob{|R\Delta \X_{(a,b)}(\sigma_3)| \ge x^{a+b+1}} + \Prob{|R\Delta \X_{(a,b)}(\sigma_3)| \ge x} 
\\
\quad\le\quad Cx^{-\gamma}\,,
\end{align*}
where the last inequality is a consequence of \eqref{eq:mb1}.

On the other hand, if $R^{1/(a+b+1)} \le x$,
\begin{align*}
\Prob{\frac{R^{1/(a+b+1)}}{R_1} \ge x} \quad& =\quad \Prob{\frac{R^{1/(a+b+1)}}{R_1} \ge x, \; R_1^{-1} \ge 1}\\
\quad& \le\quad \Prob{\frac{R}{R_1} \ge x, \; R_1^{-1} \ge 1}
 \quad\le\quad \Prob{|R\Delta \X_{(a,b)}(\sigma_3)| \ge x}\\ 
\quad&\le\quad Cx^{-\gamma}\,.
\end{align*}
Combining the above two bounds, if \(x \ge 1\) then
\begin{align}\label{eq:holdX}
\Prob{\frac{R^{1/(a+b+1)}}{R_1} \ge x} \quad\le\quad Cx^{-\gamma}\,.
\end{align}
Applying Lemma \ref{lem:tail2} with $\displaystyle{X_k = \frac{R^{1/(a+b+1)}}{R_k}}$ and $\tau_k=1$, we obtain
$$
\sum_{k=1}^{\infty}R^{-k/(a+b+1)}(\Pi_{j=1}^k X_j) \quad<\quad \infty
$$
almost surely for sufficiently large $R$.
In particular, $\lim_{k \rightarrow \infty}\Pi_{j=1}^k R_j = \infty$ and consequently,
$$
\lim_{k \rightarrow \infty}|\Delta \X_{(a,b)}(\sigma_{3k})|=0.
$$
Finally, to show that the coupling is successful and to verify the induction hypothesis for $(a,b)$, 
it is necessary to show that $\lim_{k \rightarrow \infty} \sigma_{3k}$ is almost surely finite 
and that its law has a power law tail.

This follows by applying Lemma \ref{lem:tail2} to the sum on the right hand side of the expression
$$
\sigma_{3k+3} \quad=\quad \hat{\tau}_1 + \sum_{l=1}^k R^{-2l/(a+b+1)}(\Pi_{j=1}^l X_j^2) \hat{\tau}_{l+1}, \qquad k \ge 1,
$$
with $(X_k^2, \hat{\tau}_k)$ in place of $(X_j, \tau_j)$ in the lemma, 
where $\displaystyle{X_k = \frac{R^{1/(a+b+1)}}{R_k}}$, defined for $k \ge 1$, and 
$\displaystyle{\hat{\tau}_k=\left(\Pi_{j=1}^{k-1} R_j^2\right)\left(\sigma_{3k}-\sigma_{3k-3}\right)}$, defined for $k \ge 2$, and $\sigma_0=0$.
% \xNB[WSK1]{Need to check indices here: had to correct to avoid that clash of notation!}
As the law of $\hat{\tau}_k$ has the same tail as that of $\sigma_3$, 
it follows from \eqref{eq:mb2} and \eqref{eq:holdX} that if \(t\ge1\) then
$$
\Prob{\lim_{k \rightarrow \infty} \sigma_{3k} > R^{4n+2}t} \quad\le\quad Ct^{-\gamma}\,,
$$
for sufficiently large $R$.
This establishes the induction hypothesis, and so completes the construction of a successful coupling 
when the starting points of the coupled Brownian motions satisfy $(W_1, W_2)(0)=(\tW_1, \tW_2)(0)$ and $W_2(0)=RW_1(0)$.

The argument is completed by showing how to construct the coupling from arbitrary starting points $\X(0)$ and $\tX(0)$ 
satisfying $|\X(0)| \le 1$, $|\tX(0)| \le 1$.
To do this, define the stopping times
% \xNB[WSK]{Expanded all references to s.c. and r.c.}
\begin{align*}
\sigma_{-1} \quad&=\quad \inf\{t \ge 0: \text{ using reflection coupling}, \ (W_1, W_2)(t) = (\tW_1, \tW_2)(t)\}\,,\\
\sigma_0 \quad&=\quad\inf\{t \ge \sigma_{-1}: \text{ using synchronous coupling}, \ W_2(t)=RW_1(t)\}\,\\
\CouplingTime \quad&=\quad\inf\{t \ge \sigma_0: \text{ coupling strategy constructed above}, \  \X(t) = \tX(t)\}\,.
\end{align*}
Using Brownian hitting time estimates derived from the reflection principle, when \(t\ge1\)
$$
\Prob{\sigma_{-1} >t} 
\quad\le\quad \frac{|(W_1, W_2)(0) - (\tW_1, \tW_2)(0)|}{\sqrt{t}} 
\quad\le\quad \frac{2}{\sqrt{t}}\,.
$$
Now, using the fact that $|(W_1, W_2)(0) - (\tW_1, \tW_2)(0)| \le 2$,
% and proceeding exactly along the lines of the proof of \eqref{eq:betatail} by 
and controlling the distance of the Brownian motion $(W_1, W_2)$ from the line $y=Rx$ at time $\sigma_{-1}$
as in the proof of \eqref{eq:betatail}, we obtain a constant $C$ that does not depend on the starting points such that
$$
\Prob{\sigma_0-\sigma_{-1} >t} \quad\le\quad C t^{-1/6}\,.
$$
Consequently, if \(t\ge1\) then
\begin{align}\label{eq:huh1}
\sup\{\Prob{\sigma_0 >t}\;:\; |\X(0)| \le 1,\; |\tX(0)| \le 1\}\quad\le\quad C t^{-1/6}\,.
\end{align}
Furthermore, for $x \ge 4$ and arbitrary $t>0$ to be chosen later,
\begin{align*}
& \Prob{|\Delta \X(\sigma_0)| >x} \quad=\quad \Prob{|\Delta \X(\sigma_{-1})| >x}\\
&\quad\le\quad\Prob{|\X(\sigma_{-1})-\X(0)| + |\tX(\sigma_{-1})-\tX(0)| + |\X(0)-\tX(0)| >x}\\
&\quad\le\quad \Prob{|\X(\sigma_{-1})-\X(0)| + |\tX(\sigma_{-1})-\tX(0)|>x/2} \quad 
(\text{ as } |\X(0)-\tX(0)| \le 2 \text{ and } x \ge 4)\\
&\quad\le\quad 2 \Prob{|\X(\sigma_{-1})-\X(0)| > x/4}\\
&\quad\le\quad 2\Prob{\sigma_{-1} >t} + 2\Prob{\sup_{s \le t} |\X(s)-\X(0)| > x/4}\\
&\quad\le\quad \frac{4}{\sqrt{t}} + C\frac{\Expect{\sum_{(k,l) \in \Delta_n} \int_0^t {W_1(s)^{2k}}{W_2(s)^{2l}}{\d}s}}{x^2}\\
& \qquad \qquad + Ct\frac{\Expect{\sum_{(k,l) \in \Delta_n} l^2\int_0^t {W_1(s)^{2k}}{W_2(s)^{2l-2}}{\d}s}}{x^2}\\
&\quad\le\quad \frac{2}{\sqrt{t}} + C\frac{\sum_{(k,l) \in \Delta_n} \int_0^t s^{k+l}{\d}s}{x^2} + Ct\frac{\sum_{(k,l) \in \Delta_n} l^2\int_0^t s^{k+l-1}{\d}s}{x^2}
\quad\le\quad \frac{2}{\sqrt{t}} + C \frac{t^{n+1}}{x^2}\\
&\quad\le\quad Cx^{-\gamma}\,.
\end{align*}
Here the last step holds
for some $\gamma>0$, and for $t$ chosen appropriately in terms of $x$.
% for some $\gamma>0$, for appropriately chosen $t$ in terms of $x$.
Moreover the final constant $C$ above does not depend on the starting points 
so long as they lie in the unit ball. Hence, for $x \ge 4$,
\begin{align}\label{eq:huh2}
\sup\{\Prob{|\Delta \X(\sigma_0)| >x}\;:\; |\X(0)| \le 1, |\tX(0)| \le 1\}\quad\le\quad Cx^{-\gamma}\,.
\end{align}
Thus, for $t \ge 2$, $x \ge 4$, 
\begin{multline*}
\sup\{\Prob{\CouplingTime >t}\;:\;|\X(0)| \le 1, |\tX(0)| \le 1\} 
\quad\le\quad 
\\
\sup\{\Prob{\CouplingTime >t, \; |\Delta X(\sigma_0)| \le x, \; \sigma_0 \le t/2}\;:\;|\X(0)| \le 1, |\tX(0)| \le 1\}
\\
+ \sup\{\Prob{|\Delta X(\sigma_0)| > x}\;:\;|\X(0)| \le 1, |\tX(0)| \le 1\}
\\
+\sup\{\Prob{\sigma_0 > t/2}\;:\;|\X(0)| \le 1, |\tX(0)| \le 1\}\,.
\end{multline*}
The second and third terms above are already estimated in \eqref{eq:huh2} and \eqref{eq:huh1} respectively.
To bound the first term above, we apply the strong Markov property at $\sigma_0$ to obtain
\begin{multline*}
\sup\{\Prob{\CouplingTime >t, |\Delta X(\sigma_0)| \le x, \sigma_0 \le t/2}\;:\;|\X(0)| \le 1, |\tX(0)| \le 1\}
\quad\le\quad 
\\
\sup\{\Prob{\CouplingTime>t/2}\;:\;|\Delta \X(0)| \le x, (\X(0), \tX(0)) \in \mathcal{R}\}
\end{multline*}
% To the right hand side above, we 
Apply Lemma \ref{lem:scaling} to the right hand side above, taking $r=x^{-1}$.
Now use the fact that if $|\Delta \X(0)| \le x$, then as $x >1$,
$$
|\mathcal{S}_{x^{-1}} \left(\Delta \X\right)(0)| \quad\le\quad x^{-1} |\Delta \X(0)| \quad\le\quad 1\,.
$$
Hence, we obtain
\begin{multline*}
\sup\{\Prob{\CouplingTime>t/2}\;:\;|\Delta \X(0)| \le x, (\X(0), \tX(0)) \in \mathcal{R}\}
\quad\le\quad
\\
\sup\{\Prob{\CouplingTime>t/(2x^2)}\;:\;|\Delta \X(0)| \le 1, (\X(0), \tX(0)) \in \mathcal{R}\}\,.
\end{multline*}
If $t \ge 2x^2$ then the tail estimate of the law of the above coupling time
% of the coupling constructed above 
gives
$$
\sup\{\Prob{\CouplingTime>t/(2x^2)}\;:\;|\Delta \X(0)| \le 1, (\X(0), \tX(0)) \in \mathcal{R}\}
\quad\le\quad C\frac{x^{2\gamma}}{t^{\gamma}}\,.
$$
This estimate, along with \eqref{eq:huh1} and \eqref{eq:huh2}, yields
$$
\sup\{\Prob{\CouplingTime >t}\;:\;|\X(0)| \le 1, |\tX(0)| \le 1\}
\quad\le\quad C\frac{x^{2\gamma}}{t^{\gamma}} + Cx^{-\gamma} + C t^{-1/6}\,.
$$
Choosing $x$ appropriately in terms of $t$, we see that for sufficiently large $t$ (and consequently for all $t \ge 1$ by readjusting the constants),
$$
\sup\{\Prob{\CouplingTime >t}\;:\;|\X(0)| \le 1, |\tX(0)| \le 1\} 
\quad\le\quad Ct^{-\gamma}\,,
$$
and this proves the theorem.
\end{proof}

%%%%%%%%%%%%%%%%%%%%%%%%%%%%%%%%%%%%%%%%%%%%%%%%%%%%%%%%%%%%
\section{Conclusion}
In this article, we have constructed a successful Markovian coupling for the two-dimensional Brownian motion along with a finite collection of its 
monomial Stratonovich integrals. 
In the context provided by Theorem \ref{thm:hypocouple}, 
this is a further step in the direction of extending Markovian coupling techniques 
beyond the realm of specific examples towards a more general context.
% \xNB[WSK1]{\textcolor{blue}{\textbf{QUESTION:}} Would it be much more effort to couple the time integrals as well as the Stratonovich integrals?!}
Our method shares some features with an iterative coupling scheme employed in \cite{KendallPrice-2004} for coupling iterated Kolmogorov diffusions,
though the inductive strategy described in the current paper seems to be more robust as one can build {iterations within iterations} 
into the coupling, exploiting the inductive approach described here.
A natural next step for the general program of coupling hypoelliptic diffusions
would be to couple diffusions driven by \emph{nilpotent vector fields} which do not just depend on the driving Brownian motion but the entire diffusion.
The Baker-Campbell-Hausdorff formula can be employed to show that such a coupling can be achieved 
if one can construct successful Markovian couplings for \emph{Brownian motion on the free Carnot group} of finite order \citep{Baudoin-2004}.
The geometry of the Carnot group seems to lend itself particularly to our inductive approach: 
the Lie algebra $\mathcal{U}$ of the Carnot group has a graded structure given by $\mathcal{U}=\mathcal{U}_1 \oplus \mathcal{U}_2 \dots \oplus \mathcal{U}_N$ 
and there are dilation operators $\delta_t$ that act by multiplication by $t^i$ on the elements of $\mathcal{U}_i$ while preserving the graded structure.
A possible strategy for constructing the coupling in this case would be 
to use the graded structure in the induction hypothesis and to use the dilation operator to implement the scaling strategy
used repeatedly in the above arguments.
We will investigate this in future work.

The current article also provides quantitative bounds on the distribution of the coupling time.
These can be used to obtain estimates on the total variation distance between the laws of the diffusions.
Employed in conjunction with the scaling property (Lemma \ref{lem:scaling}),
this would lead to gradient estimates for heat kernels and harmonic functions 
corresponding to the generator of the diffusion \citep{Cranston-1991,Cranston-1992,BanerjeeGordinaMariano-2016}.
We note here that 
it was shown 
in recent work \citep{BanerjeeKendall-2015,BanerjeeGordinaMariano-2016}
that optimal bounds on total variation distance and good gradient estimates, 
especially in the case of hypoelliptic diffusions, require non-immersion couplings.
However, so far, it has been possible to provide explicit constructions of these couplings only in rather special examples:
(generalized) Kolmogorov diffusions \cite{BanerjeeKendall-2015}
and Brownian motion on the Heisenberg group \cite{BanerjeeGordinaMariano-2016}.
An important challenge is to find robust non-immersion coupling constructions applicable to a wider framework and 
then to compare their performance with analogous immersion or Markovian couplings.

\textbf{Acknowledgements}
{We are grateful to the anonymous referees for their careful reading of the manuscript and suggestions that greatly improved the presentation of the article.}

%\citep{BenArousCranstonKendall-1995}
%%%%%%%%%%%%%%%%%%%%%%%%%%%%%%%%%%%%%%%%%%%%%%%%%%%%%%%%%%%%%%%%%%%
%%                                                               %%
%% No need for \maketitle.                                       %%
%%                                                               %%
%%%%%%%%%%%%%%%%%%%%%%%%%%%%%%%%%%%%%%%%%%%%%%%%%%%%%%%%%%%%%%%%%%%

%%%%%%%%%%%%%%%%%%%%%%%%%%%%%%%%%%%%%%%%%%%%%%%%%%%%%%%%%%%%%%%%%%%
%%                                                               %%
%% Please replace what follows by the body of your article       %%
%% (up to the bibliography):                                     %%
%%                                                               %%
%%%%%%%%%%%%%%%%%%%%%%%%%%%%%%%%%%%%%%%%%%%%%%%%%%%%%%%%%%%%%%%%%%%

%%%%%%%%%%%%%%%%%%%%%%%%%%%%%%%%%%%%%%%%%%%%%%%%%%%%%%%%%%%%%%%%%%%
%%                                                               %%
%% Use the two commands below for producing your bibliography    %%
%% with bibtex, then comment again the commands and include the  %%
%% content of the .bbl file in this file below the commands.     %%
%%                                                               %%
%%%%%%%%%%%%%%%%%%%%%%%%%%%%%%%%%%%%%%%%%%%%%%%%%%%%%%%%%%%%%%%%%%%

%\bibliographystyle{amsplain}
%\bibliography{yourbibfilename}

% add below the content of your .bbl file produced by bibtex.

\bibliographystyle{amsplain}	% (uses file "plain.bst")

%%%%%%%%%%%%%%%%%%%%%%%%%%%%%%%%%%%%%%%%%%%%%%%%%%%%%%%%%%%%%%%%%%%
%%                                                               %%
%% You may add acknowledgments (optional).                       %%
%%                                                               %%
%%%%%%%%%%%%%%%%%%%%%%%%%%%%%%%%%%%%%%%%%%%%%%%%%%%%%%%%%%%%%%%%%%%

% \ACKNO{We are grateful to the anonymous referees for their careful reading of the manuscript and suggestions that greatly improved the presentation of the article.}

%%%%%%%%%%%%%%%%%%%%%%%%%%%%%%%%%%%%%%%%%%%%%%%%%%%%%%%%%%%%%%%%%%%
%%                                                               %%
%% You have reached the end of your document.                    %%
%%                                                               %%
%%%%%%%%%%%%%%%%%%%%%%%%%%%%%%%%%%%%%%%%%%%%%%%%%%%%%%%%%%%%%%%%%%%

\end{document}